\numberwithin{equation}{section}
\newtheorem{propo}{Proposition}[section]
\newtheorem{lemma}[propo]{Lemma}
\newtheorem{corol}[propo]{Corollary}
\newtheorem{theo}[propo]{Theorem}
\newtheorem*{maintheo}{Main Theorem}
\theoremstyle{definition}
\newtheorem{defi}[propo]{Definition}
\newtheorem*{notation}{Notation}
\newtheorem*{rem}{Remark}
\newtheorem{remar}[propo]{Remark}
\newcommand{\Aut}{\mathop{\rm Aut}\nolimits}
\newcommand{\al}{\alpha}
\newcommand{\diag}{\mathop{\rm diag}\nolimits} 
\newcommand{\Id}{\mathop{\rm Id}\nolimits}
\newcommand{\ch}{\mathop{\rm char}\nolimits}
\newcommand{\lan}{\langle}
\newcommand{\ran}{\rangle}
\newcommand{\lam}{\lambda }
\newcommand{\Gal}{\mathop{\rm Gal}\nolimits}
\newcommand{\equad}{\quad\textrm{and}\quad}
\def\F{\mathbb{F}}
\def\SL{\mathrm{SL}}
\def\GL{\mathrm{GL}}
\def\PSL{\mathrm{PSL}}
\def\PGL{\mathrm{PGL}}
\def\SU{\mathrm{SU}}
\def\PSU{\mathrm{PSU}}
\def\GU{\mathrm{GU}}
\def\PGU{\mathrm{PGU}}
\def\Sp{\mathrm{Sp}}
\def\PSp{\mathrm{PSp}}
\def\SO{\mathrm{SO}}
\def\PSO{\mathrm{PSO}}
\def\POmega{\mathrm{P}\Omega}
\def\Mat{\mathrm{Mat}}
\begin{document}

\title[Almost cyclic  elements in cross-characteristic
representations]{Almost cyclic  elements in cross-characteristic
representations of finite groups of Lie type}

\author{Lino Di Martino}
\email{lino.dimartino@unimib.it}
\address{Dipartimento di Matematica e Applicazioni, Universit\`a degli Studi di Milano-Bicocca,
Via R. Cozzi 55,  20125 Milano, Italy}

\author{Marco A. Pellegrini}
\email{marcoantonio.pellegrini@unicatt.it}
\address{Dipartimento di Matematica e Fisica, Universit\`a Cattolica del Sacro Cuore\\
Via Musei 41, 25121 Brescia, Italy}

\author{Alexandre E. Zalesski}
\email{alexandre.zalesski@gmail.com}
\address{Academy of Sciences of Belarus, Minsk, 
Department of Physics, Mathematics and Informatics,
Prospekt Nezalejnasti 66, Minsk, 220000, Belarus}

\keywords{Finite groups of Lie type, Cross-characteristic representations, 
Almost cyclic matrices, Eigenvalue multiplicities}
\subjclass[2010]{20C33, 20C15, 20G40}

\begin{abstract}
This paper is a significant contribution to a general programme aimed to classify all projective irreducible 
representations 
of finite simple groups over an algebraically closed field, in which  the
 image of at least one element is represented by
an almost cyclic matrix (that is, a square matrix $M$ of size $n$ over a field $\F$ with the property that
there exists $\al\in \F$ such that $M$ is similar to
 $\diag(\al\cdot \Id_k, M_1)$, where $M_1$ is cyclic and $0\leq k\leq n$).
While a previous paper dealt with the Weil representations of
finite classical groups, which play a key role in the general picture, the present paper provides a conclusive answer
for all cross-characteristic projective irreducible representations of the finite quasi-simple groups of Lie type and
their automorphism groups.
\end{abstract}

\maketitle

\section{Introduction}

Let $V$ be a vector space of finite dimension $n$ over an arbitrary field $\F$, and let $M$ be a square matrix of size 
$n$ over $\F$. Then 
$M$ is said to be cyclic if the characteristic polynomial and the minimum
polynomial of $M$ coincide. Note that a matrix $M\in \Mat_n(\F)$ is cyclic if and only if the
$\F\langle M \rangle$-module $V$ is cyclic, that is, is generated by
a single element. This is  standard terminology in module theory, and
the source of the term `cyclic matrix'. Matrices with simple spectrum
often arising in applications are cyclic.  We consider a generalisation of the notion of cyclic matrix, namely, we 
define a matrix $M\in \Mat_n(\F)$ to be almost cyclic if
there exists $\al\in \F$ such that $M$ is similar to
 $\diag(\al\cdot \Id_k, M_1)$, where $M_1$ is cyclic and $0\leq k\leq n$.

Examples of almost cyclic matrices arise naturally in the study  
of matrix groups over finite fields. For instance, if an element 
$g\in \GL(V)$ acts irreducibly on $V/V'$, where $V'$ is some eigenspace of 
$g$ on $V$,  then $g$ is almost cyclic. Transvections and  reflections 
 are other examples, as well as unipotent matrices with
Jordan  form consisting of a single non-trivial block.

The aim of the present paper is to implement a crucial step within a general project, which can be stated as follows: 
determine all irreducible subgroups $G$ of $\GL(V)$ which are generated by almost cyclic matrices, mainly when $\F$ is 
algebraically closed. The motivations for such a programme  (including the connections with the area of linear group 
recognition) have been described in full detail in \cite{DMZ12}, to which paper we also refer for the citations of all
relevant past results connected to our subject.

As current applications seem to
focus on $p$-elements, we will limit ourselves to the study of elements $g\in G$ of any $p$-power order, $p$ a prime. 
Furthermore, we will make a systematic use of representation theory and will take for granted the 
classification theorem of finite simple groups. In view of it, the non-abelian finite simple groups consist of 
$26$ sporadic groups, the alternating groups of degree $>4$, and the so-called finite groups of Lie type. The latter 
come together with their defining characteristics, and in this paper it will be convenient for us to distinguish two 
groups of Lie type of distinct defining characteristic even if they are isomorphic.
For instance, we view $\PSL_3(2)$ and $\PSL_2(7)$ as distinct.

The sporadic simple groups and their covering groups have been completely dealt with in  \cite{DMPZ}. 
In \cite{DMZ10},  we started to deal with finite groups of Lie type, and determined all the  irreducible representations
of a quasi-simple group of Lie type $G$ over an algebraically closed field $\F$ of characteristic coprime to the 
defining characteristic of $G$, in which the image of at least one unipotent
element $g$ is represented by an almost cyclic matrix.  The complementary case, when $g$ is unipotent in $G$, and the 
characteristic of $\F$ is the defining characteristic of $G$, has been settled for classical groups in \cite{Su13}, and 
for the exceptional groups of Lie type in \cite{TeZ}.  
This leaves open the case  when $g$ is a semisimple element of prime-power order of $G$. The case where $g^2=1$ already
attracted attention many years ago, and was treated in \cite{PH, Wag, ZS1, ZS80}.  
 
We should also mention that the occurrence of almost cyclic $p$-elements in irreducible $\F$-representation of finite
simple groups, where $p= \ell=\ch \F$, has been studied recently in \cite{Cr}.

In  \cite{DMZ12}, the focus was on Weil representations of
finite classical groups (there, an ample overview of these representations is given in Section 5.1).
The reason to deal first with Weil representations was the strong evidence that most examples of
semisimple almost cyclic elements occur in Weil representations, and the actual occurrence of such elements 
was thoroughly examined in \cite{DMZ12}.

In the present paper, we will consider the occurrence of almost cyclic elements of prime-power order in projective
cross-characteristic irreducible representations $\phi$ of finite simple classical groups $L$ which are not Weil 
(Sections \ref{LUSpodd}, \ref{low}, \ref{otherclass}). Furthermore, we will also consider the group extensions $G$ of
groups $L$ by outer automorphisms (note that a separate ad hoc section (Section \ref{PSL2}) is required for the case
when $\PSL_2(q)\subseteq G\subseteq  \Aut  \PSL_2(q)$). Finally, we will also deal with the projective irreducible 
representations of finite
simple exceptional groups of Lie type and their automorphism groups (Section \ref{sec exc}).

We note explicitly that, for the sake of completeness, we choose to consider as simple groups of Lie type 
the commutator subgroups of the groups $\PSp_4(2)$, $G_2(2)$, ${}^2 F_4(2)$ and ${}^2 G_2(3)$.
The main result of the present paper is summarised in the following:

\begin{maintheo}\label{main}
Let $L$ be a finite simple group of Lie type, let $g \in \Aut L$ be a $p$-element for some prime $p$, and let 
$G=\langle 
L,g \rangle$. Let $\phi$ be an irreducible projective representation of $G$ over an algebraically closed 
field $\F$ of characteristic $\ell$, different from the defining characteristic of $L$. Suppose that $\phi$ is 
non-trivial on $L$; furthermore, assume that $g^2\neq 
1$, $L\neq \PSL_2(q)$ and $\phi(g)$ is almost cyclic. Then either
$\phi_{|L}$ is an irreducible Weil representation of $L$, where $L\in \{\PSL_n(q)\;  (n\geq 3),\; \PSU_n(q) \; (n\geq 
3),\; \PSp_{2n}(q)\;
(n\geq 2,\, q \textrm{ odd} )\}$ and $g$ is semisimple, or $L$ belongs to the following list of groups:
$$\begin{array}{r}
\mathcal{L} = \{\PSL_3(2), \PSL_3(4), \PSL_4(2), \PSU_3(3), \PSU_3(4), \PSU_4(2),\PSp_4(3), \PSU_5(2),\\
 \PSp_4(2)', \PSp_4(4),\PSp_6(2), \POmega_8^+(2), {}^2B_2(2^3),G_2(2)', G_2(3),G_2(4), {}^2G_2(3)'\}.
  \end{array}$$
\end{maintheo} 

A more detailed statement, providing full information on the groups $L$ belonging to $\mathcal{L}$, is given in  
Theorem \ref{main2}. The case when $g$ is an  involution is fully dealt with in 
Lemma \ref{news2a} for classical groups, and in Lemma \ref{except}  for the groups of exceptional type.

One of our main tools is Lemma \ref{uu3}, which gives, for an arbitrary finite group $G$, an upper bound for $\dim\phi$ in terms of the order of $g\in G$, provided $g$ is
almost cyclic. For many pairs $(\phi,g)$ this upper bound conflicts with the known lower bound
for $\dim\phi$, which violates the existence of the pair in question.

In order to apply that lemma to a group $G$, where $L \subseteq G \subseteq \Aut L$, and $L$ is a simple group of Lie 
type, we need an accurate upper bound for $\eta_p(\Aut L)$, the exponent of a
Sylow $p$-subgroup of $\Aut L$, and for $\alpha(g)$, the minimal number of conjugates of $g$ generating $G$. Results on 
$\eta_p(\Aut
L)$ are collected in Section \ref{se3}. Rather precise information on $\alpha(g)$ is available in a paper by Guralnick
and Saxl \cite{GS}, which is our main reference for this matter.  If $g\in L$ and $C_L(g)$ contains no unipotent
elements, then $\al(g)\leq 3$ (see Lemma \ref{g12}, deduced from a result of Gow \cite{Gow}). However, 
if $\dim\phi$ is relatively small in comparison with $\eta_p(G)$, much more analysis is required.  
This actually happens either if $|L|$ is small or if  $G=L$ and $\phi$ is a Weil representation.  (As already mentioned, 
the  Weil representations have been dealt with in \cite{DMZ12}). 

In our analysis we will deal separately with the following cases: 
(i) $|g|=2$ (see Lemma \ref{news2a} and Section \ref{sec exc}); 
(ii) $L=\PSL_2(q)$ (see Section \ref{PSL2});
(iii) The groups $\PSL_n(q)$, $n>2$; $\PSU_n(q)$, $n >2$; $\PSp_{2n}(q)$, $n > 1$, $q$ odd (see Section \ref{LUSpodd});
(iv) Some exceptional low-dimensional classical groups (see Section \ref{low}); 
(v) The groups $\Sp_{2n}(q)$, $n > 1$, $q$ even; $\Omega_{2n+1}(q)$, $n> 2$, $q$ odd; $\POmega_{2n}^{\pm}(q)$, $n> 
3$ (see 
Section \ref{otherclass}); 
 (vi) The exceptional groups of Lie type (see Section \ref{sec exc}).

\begin{notation}
The centre of a group $G$ is denoted by $Z(G)$. Let $G$ be a finite group. We write $|G|$ for the order of $G$. For $g\in G$, $|g|$ is the order of $g$, and $o(g)$ is the order of $g$ modulo $Z(G)$, that is, the minimum integer $k$ such that $g^k\in Z(G)$. 
 If $g$ does not lie in a proper normal subgroup of $G$, we write $\alpha(g)$
for the minimum number of conjugates of $g$ that generate $G$.  For a prime $p$ we denote by $\eta_p(G)$
the exponent of a Sylow $p$-subgroup of $G$.

A finite field of order $q$ (where $q$ is a prime-power) is denoted by $\F_{q}$. 
Let $V$ be a vector space of finite dimension $m>1$ over the finite field $\F_{q}$. 
The general linear group $\GL(V)$, that is, the group of all bijective linear transformations of $V$, and the special 
linear group $\SL(V)$, that is, the subgroup of $\GL(V)$ consisting of the transformations of determinant $1$,  are 
denoted by $\GL_m(q)$ 
and $\SL_m(q)$, respectively. 

Suppose that the space $V$ is endowed with a non-degenerate orthogonal, 
symplectic or unitary form. Then $I(V)$ denotes the group of the isometries of $V$, that is the set of all elements
$x\in\GL(V)$ preserving the form. Then we will loosely 
use the term `finite classical group' for a subgroup $G$ of $I(V)$ containing the commutator subgroup $I(V)^{\prime}$, 
as well as the projective image of $G$, that is, the quotient of $G$ modulo its scalars, which will be denoted by 
$\mathrm{P}G$. 
In particular:  
if $V$ is a symplectic space over  $\F_{q}$,  $I(V)$ will be denoted by $\Sp_m(q)$; if $V$ is a unitary 
space over the field $\F_{q^{2}}$, $I(V)$ will be denoted by $\GU_m(q)$; and if $V$ is an orthogonal 
space over  $\F_{q}$, $I(V)$ will be denoted by $\mathrm{O}_m(q)$. More precisely, if  $V$ is an orthogonal 
space of even dimension $m$, we will denote $I(V)$ by $\mathrm{O}_m^+(q)$ or $\mathrm{O}_m^-(q)$ if $V$ has maximal 
or non-maximal 
Witt index, respectively. 
Accordingly, the subgroups of the unitary and orthogonal isometry groups consisting of the elements of determinant $1$ 
will be denoted by $\SU_m(q)$, $\SO_m(q)$, $\SO_m^+(q)$ and $\SO_m^-(q)$, respectively. 
We recall that $\Sp_m(q)' =\Sp_m(q)$, unless $m = 2$ and $q\leq 3$, or $(m,q) = (4,2)$, whereas 
$\GU_m(q)' = 
\SU_m(q)$, unless $m = 2$ and $q\leq 3$, or $(m,q) = (3,2)$. In the orthogonal case, the groups $\SO_m(q)$, 
$\SO_m^+(q)$ and $\SO_m^-(q)$ contain $I(V)'$ as a subgroup of index $2$. We will denote by  $\Omega_m(q)$ and 
$\Omega_m ^{\pm }(q)$ the commutator 
subgroups of the corresponding orthogonal groups, and accordingly, we will refer to
a classical group as a subgroup of $I(V)$ containing the commutator subgroup $I(V)'$. 
Furthermore, we will denote by $\mathrm{CSp}_{2n}(q)$ the conformal symplectic group, that is, the group of 
symplectic similarities. Also, it should be noted that in places the term `classical group' 
will be meant to include also the groups $\GL_m(q)$ and $\SL_m(q)$ (considering $V$ endowed with the identically 
zero bilinear form). It is well-known that, for $m>2$, or $m=2$ and $q>2$, one has  $\GL_m(q)' = \SL_m(q)$.

If $G$ is a finite group of Lie type of defining characteristic $r$, we assume throughout the paper that $\ell$ is 
coprime to $r$. 
Furthermore, note that: (1) the conjugacy classes of elements of $G$ are labelled according to the GAP package 
\cite{GAP}; (2) the extensions $G$ of a finite simple group of Lie type $L$ by an outer automorphism will be labelled 
according to the Atlas notation as given in \cite{Atl} and \cite{MAtl}, unless we wish to indicate explicitly that $G$ 
is the extension of $L$ by the group of field automorphisms, in which case $G$ will be denoted, say, by 
$\mathrm{P\Sigma L}_m(q)$ or
$\mathrm{P\Sigma U}_m(q)$ for $L= \PSL_m(q)$ or $L=\PSU_m(q)$, respectively.

By a projective $\F$-representation of an arbitrary group $G$ we mean a  homomorphism $\phi: G\to \PGL_m(\F)$ for some 
$m$. 
Let $\bar G$ denote the pre-image of $\phi(G)$ in $\GL_m(\F)$ and let $\psi$ denote the linear representation of $\bar 
G$ 
arising in this way from $\phi$. If $g\in G$, and $\bar g$ denotes a pre-image of $\phi(g)$ in
$\GL_m(\F)$, we will say for short that $\phi(g)$ is almost cyclic if so is $\psi(\bar g)$ in
$\GL_m(\F)$. Note that, clearly, the property of $\phi(g)$ of being almost cyclic does not depend on the choice of 
$\bar g$.
\end{notation}

\section{Preliminaries}

We recall the following basic definition:

\begin{defi} Let $M$ be an $(n\times n)$-matrix over an arbitrary field $\F$.
We say that $M$ is almost cyclic  if there exists $\al\in \F$ such that $M$ is similar to
 $\diag(\al\cdot \Id_k, M_1)$, where $M_1$ is cyclic and $0\leq k\leq n$.
\end{defi}

It is trivial to observe that every $(3\times 3)$-matrix over any
field $\F$ is almost cyclic. 
Another elementary  observation, which will be useful throughout the paper,  is the following:
if $M\in \GL(V)$ is almost cyclic, and $U$ is
an $M$-stable subspace of $V$, then the induced actions of  $M$ on $U$ and
on $V/U$ yield almost cyclic matrices.

Throughout the paper, unless  stated otherwise, we assume that $\F$ is an algebraically closed field 
of characteristic  $\ell$. We emphasise that the representations of $G$ we will consider are all over $\F$.

\subsection{Almost cyclicity and dimension bounds}

Let $G$ be a finite group and $\phi$ be a projective irreducible $\F$-representation of $G$. Let $g \in G$ and suppose 
that 
$\phi(g)$ is almost cyclic and non-scalar.  
Our results will be essentially based on the fact that $\dim\phi$ can be bounded in terms of the minimum number
$\al(g)$ of conjugates of $g$ sufficient to generate $G$.  In this subsection we explain this in detail.

The following crucial results on generation by conjugates are due to Guralnick
and  Saxl \cite{GS}.

\begin{propo}\label{GS1} 
Let $L$ be a simple group of Lie type, and let
$1 \neq  x \in  \Aut L$.  Denote by $\alpha(x)$ the minimum number
of $L$-conjugates of $x$ sufficient to generate  $\lan x,L\ran$. Then the
following holds:
\begin{itemize}
\item[(1)] {\rm \cite[Theorem 4.2]{GS}} Let $L$ be a simple classical group, 
and assume that the natural module for $L$ 
has  dimension $n \geq  5$. Then $\alpha(x) \leq  n$, unless $L = \PSp_n(q)$ with
$q$ even, $x$ is a transvection and $\alpha(x) = n+ 1$.
\item[(2)] {\rm \cite[Theorem 5.1]{GS}} Let $L$ be a simple  exceptional
group of Lie type, 
of untwisted Lie rank $m$. Then $\alpha(x)\leq  m + 3$, except possibly for the
case $L = F_4(q)$ 
with $x$ an involution, where $\alpha(x) \leq  8$.
\end{itemize}
\end{propo}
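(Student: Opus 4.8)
Since Proposition \ref{GS1} is a statement about generation by conjugates, the plan is to bound, for each non-identity $x\in\Aut L$, the growth of the subgroup generated by successively chosen $L$-conjugates of $x$, and to show that after at most $n$ (respectively $m+3$) steps no proper subgroup can contain the resulting tuple. The natural engine is a \emph{descent through maximal subgroups}: if $\lan x^{g_1}\ld x^{g_k}\ran$ is proper, it lies in some maximal subgroup $M$ of $\lan x,L\ran$, and one wants to exhibit a further conjugate $x^{g_{k+1}}\notin M$. Controlling how many conjugates are needed to escape all maximal subgroups simultaneously is the whole problem.

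First I would attach to $x$ a numerical invariant measuring how far it is from the identity: for classical $L$, the dimension $\nu(x)$ of the commutator space $[V,x]=(x-1)V$ on the natural module $V$ (extended to the algebraic closure), with analogues for graph and field automorphisms. The expected shape of the bound is $\al(x)\approx\lceil n/\nu(x)\rceil$ plus a bounded additive error, so that the extreme value $\al(x)=n$ corresponds to $\nu(x)=1$, i.e.\ $x$ a transvection or a reflection. The heuristic justifying this is linear-algebraic: each new generic conjugate should enlarge the total commutator space by roughly $\nu(x)$ dimensions, so that after about $n/\nu(x)$ conjugates the group generated has no common invariant subspace and no common form-decomposition.

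The main technical step is to convert this heuristic into a proof that rules out every Aschbacher class of maximal subgroups. For the geometric classes (reducible, imprimitive, field-extension, tensor-product and classical subgroups) the strategy is to show that a suitable number of conjugates of $x$ cannot all preserve the corresponding structure: they cannot fix a common proper nonzero subspace, a common direct-sum or tensor decomposition, or a common subfield structure, the key input being that $x$ itself moves such a structure unless it is very degenerate. For the class $\mathcal{S}$ of almost simple subgroups one instead invokes the known lower bounds on the dimensions and orders of their cross-characteristic representations to argue that such a subgroup is too small to arise as an obstruction before the bound is reached. Handling the $\mathcal{S}$ class uniformly, and pinning down the exact additive constants, is where I expect the real difficulty to lie, and where a careful case analysis (with explicit computation in small cases) becomes unavoidable.

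Finally I would treat the genuine exceptions. The extra conjugate needed for $L=\PSp_n(q)$ with $q$ even and $x$ a transvection reflects the fact that in characteristic $2$ symplectic transvections lie in an orthogonal subgroup and their commutator spaces are constrained, so that $n$ conjugates still leave a common invariant structure and an $(n+1)$-st is required; this I would verify directly from the geometry of symplectic spaces in even characteristic. For the exceptional groups (part (2)) the same descent scheme applies, now using the Liebeck--Seitz description of maximal subgroups and bounding $\al(x)$ by the untwisted rank $m$ plus a small constant; the residual case $L=F_4(q)$ with $x$ an involution, where large involution centralisers force only $\al(x)\leq 8$, would be isolated and analysed on its own. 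Throughout, the genuinely hard part is the uniformity of the maximal-subgroup analysis across all families and the extraction of sharp constants, rather than any single conceptual step.
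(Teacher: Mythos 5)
This proposition is not proved in the paper at all: it is quoted verbatim from Guralnick and Saxl (\cite[Theorems 4.2 and 5.1]{GS}), so there is no internal argument to compare yours against. Your outline does, in fact, track the strategy of the cited source quite faithfully --- Guralnick and Saxl attach to $x$ the invariant $\nu(x)=\dim[V,x]$ on the natural module, prove bounds of the shape $\al(x)\le\lceil n/\nu(x)\rceil+c$, and run the argument through Aschbacher's classification of maximal subgroups of classical groups (and the Liebeck--Seitz analysis in the exceptional case), with transvections in even-characteristic symplectic groups emerging as the unique extremal configuration. So the roadmap is the right one.

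The gap is that what you have written is a roadmap and not a proof. Every point where the actual mathematical content lives is explicitly deferred: you acknowledge that the heuristic ``each generic conjugate enlarges the commutator space by about $\nu(x)$'' must be ``converted'' into an argument ruling out each Aschbacher class, that the class $\mathcal{S}$ of almost simple maximal subgroups requires a separate order/dimension argument, that ``pinning down the exact additive constants'' requires ``a careful case analysis,'' and that the exceptional groups need the full maximal-subgroup machinery. None of this is carried out. The constants are not a side issue here: the paper uses the bound $\al(x)\le n$ (not $n+O(1)$) and the precise identification of the single exception $\PSp_n(q)$, $q$ even, $x$ a transvection, $\al(x)=n+1$, as well as the sharp bound $m+3$ for exceptional groups with the lone possible exception $F_4(q)$ with $x$ an involution. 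Deriving these exact values is the entire substance of the Guralnick--Saxl theorems and occupies a long case-by-case analysis in their paper; a linear-algebraic heuristic about commutator spaces cannot by itself exclude containment in a member of class $\mathcal{S}$, nor does it distinguish $\al(x)\le n$ from $\al(x)\le n+1$. If your intention was to reprove the result rather than cite it, essentially all of the work remains to be done.
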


\begin{propo}\label{GS2}
Under the same assumptions and of {\rm Proposition \ref{GS1}}, and
assuming additionally that $x$ has prime order, the following holds:
\begin{itemize}
\item[(1)] {\rm \cite[Lemma 3.1]{GS}}  Let $L=\PSL_2(q)$,  with $q \geq 4$.
Then $\al(x) \leq 3$, unless either
{\rm (i)} $x$ is a field automorphism of order $2$ and $\al(x) \leq 4$, except
that $\al(x) = 5$ for $q=9$; or
{\rm (ii)} $q = 5$, $x$ is a diagonal automorphism of order $2$ and $\al(x) =4$.
Moreover, if $x$ has odd order, then $\al(x) = 2$, unless $q=9$, $|x|=3$ and
$\al(x) = 3$. 
\item[(2)] {\rm \cite[Lemma 3.2]{GS}} If $L=\PSL_3(q)$, then $\al(x)\leq 3$
unless
$x$ is an involutory graph-field automorphism and $\al(x)\leq 4$.
\item[(3)] {\rm \cite[Lemma 3.3]{GS}} If $L=\PSU_3(q)$, $q>2$,
then $\al(x)\leq 3$, unless $q=3$ and $x$ is an inner involution with 
$\al(x)=4$.
\item[(4)] {\rm \cite[Theorem 4.1(c),(d)]{GS}} If $L=\PSL_4(q)$, then 
$\al(x)\leq 4$, unless one of the following holds:
{\rm (i)} $q>2$, $x$ is an involutory graph automorphism and $\al(x)\leq 6$;
{\rm (ii)} $q=2$, $x$ is an involutory graph automorphism and $\al(x)\leq 7$.
\item[(5)] {\rm \cite[Lemma 3.4]{GS}} If $L=\PSU_4(q)$, then $\al(x)\leq 4$,
unless one of the following holds:
{\rm (i)} $x$ is an involutory graph automorphism and $\al(x)\leq 6$;
{\rm (ii)} $q=2$ with $x$ a transvection and $\al(x)\leq 5$.
\item[(6)] {\rm \cite[Theorem 4.1(f)]{GS}} If $L=\PSp_4(q)$,
then $\al(x)\leq 4$, unless $x$ is an involution and $\al(x)\leq 5$, or $q=3$
and $\al(x)\leq 6$.
\end{itemize}
\end{propo}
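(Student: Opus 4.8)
The plan is to derive each of the six assertions directly from the correspondingly cited result of Guralnick and Saxl, the only genuine content being the specialization to elements $x$ of prime order and the verification that, under this restriction, the exceptional cases are exactly those listed. Since the generic bounds of Proposition \ref{GS1} already cover the classical groups of natural dimension $n \geq 5$, the role of the present proposition is to record the sharper and more detailed information available for the low-rank groups $\PSL_2(q)$, $\PSL_3(q)$, $\PSU_3(q)$, $\PSL_4(q)$, $\PSU_4(q)$ and $\PSp_4(q)$, where the value of $\al(x)$ depends delicately on the precise type of $x$.

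First I would treat item (1). For $L = \PSL_2(q)$ the elements of prime order are the unipotent elements, the semisimple elements, and the automorphisms of prime order; invoking \cite[Lemma 3.1]{GS} and reading off the cases for $|x|$ prime yields the generic bound $\al(x) \leq 3$ together with the two families of exceptions --- involutory field automorphisms (with the isolated anomaly $\al(x)=5$ at $q = 9$) and the diagonal involution at $q = 5$ --- as well as the statement $\al(x) = 2$ for $x$ of odd order, again with the single exception $|x| = 3$, $q = 9$. For items (2) and (3) I would quote \cite[Lemma 3.2]{GS} and \cite[Lemma 3.3]{GS} respectively, noting that the only prime-order exceptions that survive are the involutory graph-field automorphism for $\PSL_3(q)$ and the inner involution at $q = 3$ for $\PSU_3(q)$. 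Items (4), (5) and (6) follow in the same way from \cite[Theorem 4.1(c),(d)]{GS}, \cite[Lemma 3.4]{GS} and \cite[Theorem 4.1(f)]{GS}, where I would simply transcribe the bounds and their exceptional cases for $\PSL_4(q)$, $\PSU_4(q)$ and $\PSp_4(q)$, checking that the involutory graph automorphisms, the transvection at $q=2$ in the unitary case, and the $q=3$ symplectic exception are precisely the ones of prime order.

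The main obstacle here is not mathematical but organizational: since all the substantive work is carried out in \cite{GS}, the task reduces to a careful reconciliation of notation and an exhaustive check that, for each family, the exceptions retained under the hypothesis ``$|x|$ prime'' coincide exactly with those stated and that none has been overlooked. In particular I would pay close attention to distinguishing the several types of automorphism of prime order (inner, diagonal, field, graph and graph-field) in each case, since the sharp value of $\al(x)$ is sensitive to this classification; this is precisely the point at which an oversight could creep in, and so it is where I would concentrate the verification.
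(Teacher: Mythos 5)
Your proposal matches the paper exactly: the paper gives no proof of Proposition \ref{GS2}, but simply embeds the citations to the relevant results of Guralnick and Saxl (\cite[Lemma 3.1--3.4, Theorem 4.1]{GS}) directly in the statement, which is precisely the transcription-and-reconciliation you describe. Nothing further is needed.
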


The above quoted bounds on $\alpha(x)$ play a role in the following lemma, which is crucial for our purposes.

 \begin{lemma}\label{uu3} 
Let $H$ be a finite group, let  $\phi: H\to \PGL_m(\F)$ be an irreducible projective $\F$-representation of $H$, where 
$\F$ is an arbitrary
algebraically closed field, and let $\bar H$ be the pre-image of $\phi(H)$ in $\GL_m(\F)$. Let $M$ be the module 
affording the linear representation $\psi$  of $\bar H$ arising from $\phi$. For any $h\in H$ denote by $\bar h$ a 
pre-image of $\phi(h)$ in $\bar H$, and choose $h \in H$ such that $H$ is generated some conjugates of $h$. Let 
$d=\dim(\lam\cdot\Id - \bar h)M\neq 0$
for some $\lam\in F$. Then $\dim \phi \leq d\al(h)$. In particular, if $\phi(h)$ is almost cyclic, then $\dim \phi \leq \al(h)(o(h)-1)\leq \al(h)(|h|-1)$.
\end{lemma}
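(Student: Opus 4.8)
The plan is to exploit irreducibility of $\psi$ through a common eigenspace of several conjugates of $\bar h$. Fix $\lam\in\F$ realising $d=\dim(\lam\cdot\Id-\bar h)M\neq 0$, so that $K_0:=\ker(\lam\cdot\Id-\bar h)$ has dimension $m-d$. Put $k=\alpha(h)$ and choose conjugates $h_1,\dots,h_k$ of $h$ generating $H$, say $h_i=x_ihx_i^{-1}$; lifting to $\bar H$, set $\bar h_i=\bar x_i\bar h\bar x_i^{-1}$, which is a pre-image of $\phi(h_i)$. Then $K_i:=\ker(\lam\cdot\Id-\bar h_i)=\bar x_iK_0$ again has dimension $m-d$.

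First I would show that $K:=\bigcap_{i=1}^k K_i$ is $\bar H$-invariant. Indeed, on $K$ each $\bar h_i$ acts as the scalar $\lam$, so each $\bar h_i$ stabilises $K$; since the $\phi(h_i)$ generate $\phi(H)=\bar H/Z$, where $Z$ is the group of scalars in $\bar H$, the matrices $\bar h_1,\dots,\bar h_k$ together with $Z$ generate $\bar H$, and scalars preserve every subspace, whence $K$ is $\bar H$-stable. By irreducibility either $K=0$ or $K=M$; the latter would force every $\bar h_i$, and hence $\bar h$, to equal $\lam\cdot\Id$, contradicting $d\neq 0$. So $K=0$, and subadditivity of codimension gives $0=\dim K\geq m-\sum_{i=1}^k(m-\dim K_i)=m-kd$, that is $\dim\phi=m\leq kd=\alpha(h)\,d$, which is the first assertion.

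For the refinement I would specialise $\lam$. The key arithmetic input is that $\bar h^{o(h)}$ is scalar, say $\bar h^{o(h)}=c\cdot\Id$, so the minimal polynomial of $\bar h$ divides $x^{o(h)}-c$ and therefore has degree at most $o(h)$. Writing $\bar h$ similar to $\diag(\al\cdot\Id_j,M_1)$ with $M_1$ cyclic and $\bar h$ non-scalar, I would take $\lam$ to be an eigenvalue of maximal geometric multiplicity (namely $\al$ when $j\geq 1$, and any eigenvalue when $\bar h$ is cyclic). A short case check—splitting on whether $\bar h$ is cyclic and whether $\al$ is a root of the minimal polynomial of $M_1$, and using that this minimal polynomial equals the characteristic polynomial of $M_1$—shows $d=\dim(\lam\cdot\Id-\bar h)M\leq\deg(\text{min.\ poly.\ of }\bar h)-1\leq o(h)-1$. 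Combined with the first part this yields $\dim\phi\leq\al(h)(o(h)-1)$, and $o(h)\mid|h|$ gives $o(h)-1\leq|h|-1$, completing the chain.

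The step requiring the most care is the $\bar H$-invariance of $K$ together with the correct passage from the projective $\phi$ to the linear $\psi$: one must lift the $k$ generating conjugates coherently as $\bar h_i=\bar x_i\bar h\bar x_i^{-1}$ and observe that the scalar subgroup $Z$, being precisely the kernel of $\bar H\to\phi(H)$, both preserves $K$ and completes $\bar h_1,\dots,\bar h_k$ to a generating set of $\bar H$. Once this is in place the dimension bound is immediate from codimension subadditivity, and the almost-cyclic refinement is merely a matter of locating the eigenvalue of maximal multiplicity and reading off the degree of the minimal polynomial.
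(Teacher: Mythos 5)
Your proof of the first assertion is correct and is, up to duality, the paper's own argument: where you show that the common eigenspace $\bigcap_i\ker(\lam\cdot\Id-\bar h_i)$ is a proper $\bar H$-submodule and hence zero, the paper shows that $\sum_i(\lam\cdot\Id-\bar h_i)M$ is a non-zero $\bar H$-submodule and hence all of $M$; both then conclude by subadditivity of rank/codimension, so the content is the same. For the second assertion the paper simply cites \cite[Lemma 2.1]{DMPZ}, whereas you supply a proof; your case analysis (choosing $\lam$ to be the repeated eigenvalue, so that $d=\deg(\textrm{min.\ poly.\ of }\bar h)-1$) is sound.

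The one step you should not treat as free is the claim that $\bar h^{o(h)}$ is scalar. For a projective representation, a preimage in $\GL_m(\F)$ of the image of a central element of $H$ need \emph{not} be scalar (e.g.\ an irreducible projective representation of $C_2\times C_2$ of degree $2$: every $o(h)=1$ yet no non-identity element lifts to a scalar). Here the claim is nevertheless true, but it uses the generation hypothesis: writing $z=h^{o(h)}\in Z(H)$, conjugation by any $\bar x\in\bar H$ multiplies $\bar h^{o(h)}$ by a scalar $\mu(x)$, and $\mu$ is a homomorphism $H\to\F^\times$ which is trivial on $h$ (powers of $\bar h$ commute) and hence on every conjugate $h_i$; since the $h_i$ generate $H$, the element $\bar h^{o(h)}$ centralises $\bar H$ and is scalar by Schur's lemma. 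If you only want the weaker bound $\al(h)(|h|-1)$ this is unnecessary, since $\bar h^{|h|}$ is a preimage of the identity of $\PGL_m(\F)$ and is therefore scalar outright. With that justification added, your argument is complete.
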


\begin{proof}
Let  $m=\al(h)$ and $h_1, \ldots, h_m$ be conjugates of $h$ generating $H$. Let $M_i=  (\lambda\cdot \Id -h_i)M$. Then
every $\bar h_i$ acts scalarly as $\lambda$ on $M/M_i$, and hence it acts scalarly as $\lambda$ on any quotient $M/U$ where $U$ is a subspace containing  $M_i$, thus in particular on $M/(\sum M_i)$. This is true for 
every $i$, so every $\bar h_i$ acts as $\lambda$ on  $M/(\sum M_i)$.
Since $\lan h_1,\ldots,h_m\ran=H$, the group $\bar H$ acts scalarly on $M/(\sum M_i)$. It follows that $\sum M_i$ is a
non-zero $\bar H$-submodule of $M$, and hence coincides with $M$. Since $\dim(\sum^m_{i=1}  
(\lam\cdot\Id - \bar h_i)M)\leq \sum^m_{i=1}  
\dim (\lam\cdot\Id -\bar h_i)M= m\cdot \dim (\lam\cdot\Id -\bar h)M=md$, the first part of the statement follows. The 
second part  is proven in \cite[Lemma 2.1]{DMPZ}.
\end{proof}

We remark here that throughout the paper we will frequently make use of Lemma \ref{uu3} without explicit reference to it.
The following results concerning regular semisimple elements will be useful for our purposes.

\begin{lemma}\label{d102}\cite{Gow}
Let $G$ be a simple group of Lie type of characteristic $\ell$ and let
$1 \neq g\in G$ be a regular semisimple element. Then every semisimple element of $G$
can be expressed as a product $ab$, where $a,b$ are conjugate to $g$.
\end{lemma}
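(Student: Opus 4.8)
The plan is to argue by character theory, reducing the existence of the factorization $t=ab$ to a positivity statement about a class-algebra constant. Write $C$ for the $G$-conjugacy class of the regular semisimple element $g$, and fix a semisimple $t\in G$. By the Frobenius formula, the number of pairs $(a,b)\in C\times C$ with $ab=t$ is
\[
N(t)=\frac{|C|^2}{|G|}\sum_{\chi\in\mathrm{Irr}(G)}\frac{\chi(g)^2\,\overline{\chi(t)}}{\chi(1)}.
\]
Since $|C|^2/|G|>0$, the whole statement is equivalent to proving that $S(t):=\sum_\chi \chi(g)^2\overline{\chi(t)}/\chi(1)$ is strictly positive for every semisimple $t$. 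The trivial character contributes exactly $1$, so I would try to show that the remaining terms have real part greater than $-1$. I would flag one caveat immediately: for $t=1$ one computes $S(1)=|C_G(g)|$ or $0$ according to whether $g$ is real, so the identity must be treated via the realness of the classes in play (or simply excluded).

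First I would assemble the two features that make regular semisimplicity decisive for the values $\chi(g)$. Column orthogonality gives $\sum_\chi|\chi(g)|^2=|C_G(g)|=|T|$, where $T=C_G(g)$ is the maximal torus containing $g$; hence the total mass of the numbers $\chi(g)$ is only of order $q^{r}$ with $r$ the rank. Moreover, regular semisimple elements are uniform: the value of any class function at such an element is determined by its uniform projection, so $\chi(g)$ is a combination of the Deligne--Lusztig values $R_{T}^{\theta}(g)$, each of which is a sum of at most $|W|$ roots of unity. This gives a bound $|\chi(g)|\le c(W)$ depending only on the Weyl group and not on $q$.

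With these in hand I would attempt the estimate, and this is where the real difficulty lies. The naive bound, using $|\chi(t)|\le\chi(1)$ together with $\sum_\chi|\chi(g)|^2=|T|$, only yields
\[
|S(t)-1|\le\sum_{\chi\ne1}\frac{|\chi(g)|^2|\chi(t)|}{\chi(1)}\le |T|-1,
\]
which is far larger than $1$: the triangle inequality discards exactly the phase cancellation among the summands $\chi(g)^2\overline{\chi(t)}$ that keeps $S(t)$ near $1$. I expect this to be the main obstacle: proving positivity is not a matter of crude size bounds but of exhibiting genuine cancellation, which requires the signed Deligne--Lusztig values rather than their absolute values, and is most delicate precisely when $t$ has a large centralizer (so that many $\chi(t)$ are close to $\chi(1)$) and when $q$ is small (so that the minimal nontrivial degree $D(G)$, which grows like a power of $q$, no longer dwarfs $|T|$).

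To overcome it I would expand $\chi(g)$ through the $R_{T}^{\theta}$ and re-sum $S(t)$ against the explicit Deligne--Lusztig and Green-function data, turning the problem into the evaluation of a sum indexed by the characters of $T$ and the relative Weyl group $N_G(T)/T$, where the cancellation becomes visible and positivity reduces to a combinatorial statement at the level of $W$; the finitely many small groups and small values of $q$, where the asymptotic margin is lost, I would dispatch by direct computation. As a fallback, at least for the classical groups, I would keep in reserve an explicit Gauss- or Bruhat-decomposition construction of $a,b$ in the natural module, which produces the factorization $t=ab$ by hand and bypasses the character estimates entirely.
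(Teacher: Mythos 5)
First, a point of reference: the paper offers no proof of this lemma at all --- it is quoted from Gow's article \cite{Gow} --- so the only meaningful comparison is with Gow's published argument, which does indeed run through the Frobenius class-algebra constant and Deligne--Lusztig theory, i.e.\ exactly the framework you set up. Your reduction to the positivity of $S(t)$, the identity $\sum_\chi|\chi(g)|^2=|C_G(g)|=|T|$, and your observation that the statement as literally written fails at $t=1$ unless $g$ is real (harmless here, since the paper only invokes the lemma with $t$ a Singer cycle, as in Lemma \ref{87t}) are all correct and well taken.

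However, the proposal is a plan rather than a proof: it stops precisely where the work begins. You correctly diagnose that the triangle inequality only yields $|S(t)-1|\le |T|-1$ and that genuine cancellation or finer structure is needed, but the sentence ``positivity reduces to a combinatorial statement at the level of $W$'' is an aspiration, not an argument --- nothing in the write-up actually exhibits the cancellation or bounds the non-principal terms. The decisive ingredient you are missing is a constraint on \emph{which} characters can contribute, together with a lower bound on their degrees: since $g$ is regular semisimple in the maximal torus $T$, the characteristic function of its class is uniform and supported on the Deligne--Lusztig characters $R_T^\theta$ of that one torus, so $\chi(g)\neq 0$ forces $\chi$ to occur in some $R_T^\theta$, and every such nontrivial $\chi$ has degree comparable (up to a factor bounded in terms of $|W|$ only) to the prime-to-defining-characteristic part of $[G:T]$. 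It is this largeness of the denominators $\chi(1)$, played against $\sum_{\chi\neq 1}|\chi(g)|^2=|T|-1$ and against a separate control of $|\chi(t)|$ for semisimple $t$, that closes the estimate; the last point is itself delicate exactly in the regime you flag (large $C_G(t)$, small $q$) and is not addressed in your sketch. The fallback you mention (an explicit Bruhat or Gauss-decomposition construction of $a$ and $b$) is likewise only named, not carried out, and is not easier than the character-theoretic route. As written, the proposal identifies the right strategy but does not establish the lemma.
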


As a corollary of  Lemma \ref{d102}, we obtain a small refinement of case (1) of Proposition \ref{GS2}, when considering
$2$-elements:

\begin{lemma}\label{87t} 
Let $L=\PSL_2(q)$, where $q$ is odd and $q>9$. Let $g\in L$ be a $2$-element such that $g^2\neq 1$. Then   
$\alpha(g)=2$.
\end{lemma}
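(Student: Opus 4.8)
The plan is to combine Gow's theorem (Lemma \ref{d102}) with the subgroup structure of $\PSL_2(q)$. First I would record the two basic facts about semisimplicity. Since $q$ is odd, the defining characteristic $r$ is odd, so every $2$-element is semisimple; moreover every nontrivial semisimple element of $L=\PSL_2(q)$ is \emph{regular}, its centraliser being a cyclic maximal torus. Hence $g$, a $2$-element with $g^2\neq 1$, is a nontrivial regular semisimple element. It lies in a cyclic maximal torus $T_g$ whose order $d$ is one of $\frac{q-1}{2},\frac{q+1}{2}$, and $|g|=2^k\ge 4$ divides $d$. The other torus order $d'$ is coprime to $d$ and, being odd and at least $5$ (as $q>9$), is genuinely available.

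Next I would fix a maximal torus $T'$ of order $d'$ (the torus order \emph{not} divisible by $|g|$) and let $t$ be a generator of $T'$. Since $t$ is a nontrivial semisimple element and $g$ is regular semisimple, Lemma \ref{d102} produces conjugates $a,b$ of $g$ with $t=ab$. Put $H=\langle a,b\rangle$. Then $H$ contains $t$, hence the whole torus $T'=\langle t\rangle$, and it contains the $2$-element $a$ of order $2^k\ge 4$, which lies in a conjugate of $T_g$. The goal is to prove $H=L$: this gives $\al(g)\le 2$, and since $L$ is nonabelian simple a single cyclic subgroup cannot generate it, so $\al(g)\ge 2$ and equality follows.

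The heart of the matter is to show that no proper subgroup of $L$ contains both the full torus $T'$ and the element $a$, which I would do by running through Dickson's classification of the subgroups of $\PSL_2(q)$. Subgroups contained in a Borel are excluded because $a$ is an $r'$-element, so its order $2^k$ would have to divide the split-torus order $\frac{q-1}{2}$; this is impossible when $T_g$ is nonsplit (coprimality), while if $T_g$ is split then $T'$ is nonsplit and lies in no Borel. Dihedral subgroups (torus normalisers) are excluded because every element of order $>2$ in a dihedral group lies in its cyclic part, forcing $2^k\mid d'$, against coprimality. The subgroups $A_4,S_4,A_5$ cannot contain the cyclic group $T'$ of order $d'\ge 5$, the only borderline case being $C_5\le A_5$; but $A_5$ (like $A_4$ and $S_4$) has no $2$-element of order $\ge 4$ to accommodate $a$.

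The main obstacle, and the point where the hypothesis $q>9$ is essential, is ruling out the subfield subgroups $\PSL_2(q_0)$ and $\PGL_2(q_0)$ with $q=q_0^m$, $m$ prime (so $q_0\ge 3$, since $r$ is odd). For these I would use a size comparison: a cyclic subgroup of $\PGL_2(q_0)$ has order at most $q_0+1$, whereas $T'$ is cyclic of order $d'\ge \frac{q-1}{2}\ge \frac{q_0^2-1}{2}$, and one checks $\frac{q_0^2-1}{2}>q_0+1$ for all admissible $q_0$ once $q>9$; hence $T'$ cannot embed in a proper subfield subgroup. Assembling all the cases shows that the only subgroup of $L$ containing $T'$ and $a$ is $L$ itself, whence $H=L$ and $\al(g)=2$.
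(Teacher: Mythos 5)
Your proposal is correct in substance and rests on the same key lemma as the paper, namely Gow's result (Lemma \ref{d102}): both proofs express a carefully chosen semisimple element as a product of two conjugates of $g$ and then show that no proper subgroup can contain the resulting configuration. The difference lies in the choice of target and the ensuing case analysis. The paper always takes the target to be a Singer cycle $s$ (a generator of the nonsplit torus, of order $(q+1)/2$), exploits the fact that for $q>9$ the only proper overgroups of $\langle s\rangle$ are $\langle s\rangle$ and its dihedral normaliser $N$, and then kills the residual case $g,x\in N$ by a short order computation: two generators of the cyclic $2$-subgroup of $S=\langle s\rangle$ multiply to an element of strictly smaller $2$-power order, contradicting $gx=s$. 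Your target is instead a generator $t$ of the torus of order coprime to $|T_g|$; since this may be the split torus, whose overgroups include Borel subgroups, you are forced into the full Dickson classification. What your route buys is the avoidance of the final order-counting trick; what it costs is a longer case analysis, and this is where the one blemish sits. In the subfield case your inequality $\frac{q_0^2-1}{2}>q_0+1$ fails (it is an equality) for $q_0=3$; the conclusion still holds because $q=q_0^m>9$ with $m$ prime forces $m\geq 3$ when $q_0=3$, so $d'\geq\frac{27-1}{2}=13>4$, but as written the chain $d'\geq\frac{q-1}{2}\geq\frac{q_0^2-1}{2}>q_0+1$ is too lossy at that point and should be replaced by a direct comparison of $\frac{q-1}{2}$ with $q_0+1$. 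Similarly, in the dihedral case the cyclic part of the subgroup need not equal $T'$; the clean statement is that it would have to contain an element of order $2^kd'$, which divides the order of no maximal torus of $L$. Both are easily repaired, so the argument stands.
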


\begin{proof} 
Note that $g$ is a regular semisimple element of $L$
(as $C_{L}(g)$ contains no unipotent element). So, by Lemma \ref{d102}, there exists $x\in L$   conjugate to $g$ such that 
$gx=s$, where $s$ is a Singer cycle of $L$.
If  $q>9$, it is well-known that every proper subgroup of $L$ containing $s$ is either $S=\lan s\ran$ or $N=N_{L}(S)$, a
dihedral group of order $2|S|$.
Therefore, the result is true unless both $g,x\in N$. But the latter is not the case.  Indeed, suppose that $g,x\in N$.
As $N$ is dihedral, all the  $2$-elements of order $\geq4$ of $N$ are in $S$.
Therefore $g\in S$ and $x\in S$.  
 As $S$ is cyclic and $|g|=|x|$, it follows readily that $|gx|<|g|$; this obviously contradicts $gx=s$, as claimed. 
\end{proof}
 
\begin{lemma}\label{g12}\cite[Lemma 2.8]{DMZ12}
Let  $G$ be a simple group of Lie type.  Then the following holds: 
\begin{itemize}
\item[(1)]  $G$ is generated by two semisimple elements.
\item[(2)] Let $g\in G$ be any regular semisimple element. Then  $\al(g)\leq3$, that is, 
$G$ can be generated by three elements conjugate to $g$. 
\end{itemize}
\end{lemma}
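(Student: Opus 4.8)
The plan is to obtain both statements from Gow's theorem (Lemma \ref{d102}), which is the engine of the whole argument. The first thing I would record is its immediate consequence: if $g\neq 1$ is regular semisimple, then by Lemma \ref{d102} every semisimple element of $G$ --- in particular every element of any prescribed maximal torus --- is a product of two $G$-conjugates of $g$, so the conjugates of $g$ generate a subgroup containing a full set of semisimple elements, hence a nontrivial normal subgroup of the simple group $G$, hence $G$ itself. Thus $\al(g)$ is finite. Moreover, writing two semisimple generators $s_1,s_2$ of $G$ (granted by part (1)) as $s_1=a_1b_1$ and $s_2=a_2b_2$ with all four factors conjugate to $g$, one immediately gets the crude bound $\al(g)\leq 4$. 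The real content of (2) is to improve $4$ to $3$.

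For part (1), I would argue that $G$ can be generated by two semisimple elements as follows. Since $G$ is simple, the conjugates of a fixed regular semisimple $g$ generate all of $G$; the task is to extract a two-element semisimple generating set. The cleanest route is to take two maximal tori $T_1,T_2$ in sufficiently general position that no proper subgroup contains both a regular element of $T_1$ and one of $T_2$, and then to pick regular (hence semisimple) $t_i\in T_i$ with $\lan t_1,t_2\ran=G$. For groups of large rank or large field this can be guaranteed generically; the main obstacle here is uniformity, and I would handle the finitely many small-rank, small-field groups by direct inspection, using the explicit knowledge of their maximal subgroups together with the bounds of Propositions \ref{GS1} and \ref{GS2}.

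For part (2), I would start from semisimple generators $s_1,s_2$ of $G$ as in (1) and fix a decomposition $s_1=a_1b_1$ with $a_1,b_1$ conjugate to $g$ (Lemma \ref{d102}). Put $H_1=\lan a_1,b_1\ran$; since $b_1=a_1^{-1}s_1$ we have $H_1=\lan a_1,s_1\ran\ni s_1$, so $H_1$ is generated by two conjugates of $g$ and already contains the regular semisimple element $s_1$. It then suffices to produce a single further conjugate $b_2$ of $g$ with $\lan H_1,b_2\ran=G$, for then $a_1,b_1,b_2$ are three conjugates of $g$ generating $G$ and $\al(g)\leq 3$. The existence of such a $b_2$ amounts to showing that the conjugates of $g$ are not all absorbed by the proper subgroups of $G$ containing $H_1$.

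This last point is where the difficulty concentrates. To control it I would exploit the fact that a regular semisimple element has centralizer equal to a maximal torus, so that $a_1,b_1$ (and hence $H_1$) can be chosen to make $H_1$ large --- irreducible on the natural module, or at least contained in only few maximal subgroups --- and then bound, class by class of maximal subgroups $M\supseteq H_1$, the number of conjugates of $g$ lying in such $M$; the aim is to show this total is strictly smaller than $|g^G|$, forcing a good $b_2$ to exist. The uniform verification of this count across all Lie types and ranks is the main obstacle, and, as in part (1), the residual small cases (the exceptions appearing in Propositions \ref{GS1} and \ref{GS2}, together with very small fields) would be dispatched by explicit computation, with Lemma \ref{87t} illustrating the kind of hands-on argument needed in rank one.
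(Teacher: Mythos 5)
First, a point of reference: the paper does not prove this lemma at all --- it is imported verbatim as \cite[Lemma 2.8]{DMZ12} --- so the comparison is between your sketch and the argument behind that citation rather than anything in the present text. Your opening reductions are sound: simplicity of $G$ together with Lemma \ref{d102} does show that the conjugates of a regular semisimple $g$ generate $G$, and, granting part (1), writing each of two semisimple generators as a product of two conjugates of $g$ gives the crude bound $\al(g)\leq 4$. The reduction of (2) to producing one further conjugate $b_2$ with $\lan a_1,b_1,b_2\ran=G$, where $s_1=a_1b_1$, is also correctly set up.

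The problem is that the two steps carrying all the weight are precisely the ones you leave as acknowledged ``main obstacles''. Part (1) is a genuine theorem, not a genericity statement: the claim that two maximal tori can be put in ``sufficiently general position'' so that no proper subgroup meets both in regular elements is essentially equivalent to what is to be proved, and neither the generic case nor the list of small exceptions is actually treated. Likewise, in (2) the existence of $b_2$ is reduced to a count of conjugates of $g$ inside the maximal overgroups of $H_1$, a count you do not perform and concede you cannot do uniformly; note also that Lemma \ref{d102} only asserts the existence of \emph{some} decomposition $s_1=a_1b_1$, so ``choosing $H_1$ large'' is not something the quoted results let you arrange. The standard way to close both gaps at once --- and, following the chain of citations, the way \cite{DMZ12} does it --- is to invoke the $3/2$-generation (probabilistic generation) results of Guralnick and Kantor: there is a class of regular semisimple elements $t$ such that for every $1\neq x\in G$ one has $G=\lan x,t\ran$ for a suitable $t$ in that class. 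Taking $x$ semisimple gives (1) immediately; taking $x=g$ and then factoring $t=ab$ with $a,b$ conjugate to $g$ via Lemma \ref{d102} gives $G=\lan g,a,b\ran$, i.e.\ $\al(g)\leq 3$, with no counting over maximal subgroups needed. Without an external input of this strength your argument does not close.
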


The following two lemmas are crucial for us in view of the results above. The first one yields the best known lower
bounds for the dimension of an  irreducible projective cross-characteristic $\F$-representation of a finite classical 
group. The second one
provides lower bounds for the dimension of any non-Weil irreducible projective cross-characteristic $\F$-representation 
of a finite
classical group.

\begin{lemma}\cite[Table 1]{Ho}\label{md6}
Let $L$ be a simple classical group and let $\phi$
be an irreducible projective cross-characteristic $\F$-representation of $L$, with  $\dim\phi>1$. 
Then the following holds:
\begin{itemize}
\item[(1)] If $L=\PSL_2(q)$, then $\dim\phi\geq \frac{q-1}{(2,q-1)}$, unless $q = 4,9$; in the latter exceptional 
cases,
$\dim\phi\geq 2, 3$, respectively.
\item[(2)]  If $L=\PSL_n(q)$, $n>2$, then $\dim\phi\geq \frac{q^n-1}{q-1}-2$, unless $(n,q)= (3,2),(3,4),$ 
$(4,2),(4,3)$;
in the latter exceptional cases, $\dim\phi\geq 2, 4, 7, 26$, respectively.
\item[(3)] If $L=\PSU_n(q)$, $n>2$, $(n,q)\neq(4,2),(4,3),$ then $\dim\phi\geq \frac{q^n-q}{q+1}$ if $n$ is odd, 
while $\dim\phi\geq \frac{q^n-1}{q+1}$ if $n$ is even; if $(n,q) = (4,2),(4,3)$, then  $\dim\phi\geq 4, 6$, 
respectively.
\item[(4)] If $L=\PSp_{2n}(q)$, $n>1$, and $q$ is odd,  then $\dim\phi\geq \frac{q^n-1}{2}$.
\item[(5)] If $L=\PSp_{2n}(q)$, where $n>1$, $q$ is even, and $(n,q)\neq (4,2)$, then $\dim\phi\geq
\frac{q(q^n-1)(q^{n-1}-1)}{2(q+1)}$. If $L=\PSp_4(2)'$, then $\dim\phi\geq 2$.
\item[(6)] Let $L=\POmega_{2n+1}(q)$, where $n>2$ and $q$ is odd.
If $q=3$, and $n\neq 3$,  
then $\dim\phi\geq \frac{(3^{n}-1)(3^{n}-3)}{3^2-1};$ if $q>3$, then $\dim\phi\geq\frac{q^{2n}-1}{q^2-1}-2$; if
$(n,q)=(3,3)$, then $\dim\phi\geq 27$.
\item[(7)]  Let $L=\POmega_{2n}^+(q)$, $n>3$. If $q<4$ and $(n,q)\neq (4,2)$, then $\dim\phi\geq 
\frac{(q^n-1)(q^{n-1}-1)}{q^2-1}$; if $q\geq4$, then $\dim\phi\geq  \frac{(q^n-1)(q^{n-1}+q)}{q^2-1} - 2$; if
$L=\POmega_8^+(2)$, then $\dim\phi\geq 8$.
\item[(8)] Let $L=\POmega_{2n}^-(q)$, $n>3$, where $(n,q)\neq (4,2), (4,4), (5,2),(5,3)$. Then $\dim\phi\geq 
\frac{q(q^{n}+1)(q^{n-2}-1)}{q^2-1}-1;$ for the exceptional cases, $\dim\phi\geq 32, 1026, 151,$ $2376$, respectively.
\end{itemize}
\end{lemma}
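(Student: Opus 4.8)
The statement is a compilation of the known lower bounds for the dimension of a nontrivial irreducible projective cross-characteristic representation of a finite classical group $L$, and the natural strategy is to reduce the whole problem to the determination of the smallest degree of a nontrivial irreducible $\ell$-modular representation of (a quasi-simple cover of) $L$, uniformly in $\ell\neq r$. The plan is to treat first the ordinary case $\ell=0$, where $\dim\phi$ is literally the degree of an ordinary irreducible character, and then to transfer the resulting bound to positive cross characteristic.

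For the ordinary case I would read off the smallest nontrivial degrees from the representation theory of the relevant covers. For $\PSL_n(q)$ the deleted permutation module on the points of $\mathbb{P}^{n-1}(\F_q)$ gives a representation of degree $\frac{q^n-1}{q-1}-1$, and the displayed bound $\frac{q^n-1}{q-1}-2$ accounts for the possible further reducibility of this module in cross characteristic; for $\PSU_n$ and for $\PSp_{2n}(q)$ with $q$ odd the Weil characters supply the smallest degrees (e.g. $\frac{q^n-1}{2}$ for $\PSp_{2n}(q)$). More generally, the generic-degree formulas of Deligne--Lusztig theory produce the candidate minimal unipotent and semisimple degrees, and one checks that the minimum over the finitely many smallest families is exactly the quantity stated in each line. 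The subtraction of a small constant in several cases reflects the fact that the very smallest Deligne--Lusztig virtual character is reducible, so one passes to its nontrivial irreducible constituent.

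To pass to $\ell>0$ coprime to $r$, the key point is that these generic bounds are stable under $\ell$-modular reduction. I would invoke the Landazuri--Seitz mechanism: every irreducible $\ell$-Brauer character is a constituent of the reduction mod $\ell$ of an ordinary character, while a separate lower bound---coming either from the minimal degree of a faithful permutation action on the cosets of a parabolic subgroup, or from the action of a maximal torus on the module---forbids the existence of any nontrivial irreducible Brauer character of degree below the stated threshold. This is exactly the content underlying the refinements of Seitz--Zalesskii and Tiep--Zalesski that sharpen the original Landazuri--Seitz estimates to the precise constants of the table.

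The main obstacle, and the reason the statement is simply quoted from \cite[Table 1]{Ho}, is twofold. First, obtaining the \emph{exact} constant rather than an asymptotic estimate requires ruling out all smaller modular constituents uniformly over $\ell$, which is delicate precisely when $\ell$ divides $|L|$ and the decomposition matrices are nontrivial. Second, the finite list of genuine exceptions---such as the pairs $(n,q)=(3,2),(3,4),(4,2),(4,3)$ for $\PSL_n(q)$, or $\PSp_4(2)'$ and $\POmega_8^+(2)$---must be pinned down by direct inspection of the ordinary and modular Atlas data together with the classification of low-dimensional cross-characteristic representations. I would therefore establish the generic families by the degree computations above and dispatch each exceptional case by explicit reference to the tabulated character-degree information.
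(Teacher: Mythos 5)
The paper does not prove this lemma at all: it is quoted verbatim from \cite[Table 1]{Ho}, which in turn compiles results of Landazuri--Seitz type together with the sharpenings due to Seitz--Zalesskii, Tiep, Guralnick--Tiep, Hiss--Malle and others. So there is no internal proof to compare your argument against, and the only fair benchmark is whether your proposal would stand on its own.

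As a self-contained proof it does not: it is an accurate roadmap of where the result comes from, but every step that carries actual content is deferred. You correctly identify the sources of the generic degrees (the deleted permutation module for $\PSL_n(q)$, the Weil characters for $\PSU_n(q)$ and $\PSp_{2n}(q)$ with $q$ odd, Deligne--Lusztig degree formulas for the orthogonal cases) and the reason for the small subtracted constants, and you correctly flag that the passage to $\ell>0$ and the exceptional pairs are the hard part. But none of the degree computations is carried out, the claim that the generic bounds are ``stable under $\ell$-modular reduction'' is precisely the nontrivial assertion that the cited papers spend their length establishing (the Landazuri--Seitz mechanism works by restricting a putative small Brauer character to a large extraspecial or abelian $r$-subgroup and counting eigenvalues, not merely by lifting to characteristic $0$), and the exceptional cases are listed rather than verified. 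In short, your write-up correctly locates the proof in the literature --- which is exactly what the paper itself does by citing \cite{Ho} --- but it should be presented as a citation with commentary, not as a proof.
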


From \cite[Theorems 2.1 and  2.7]{GMST}, \cite[Theorem 1.1]{GT2},   \cite{GT} and \cite[Theorem 1.6]{HM} 
we obtain the following:

\begin{lemma}\label{gmst2}
Let $L\in \{\PSL_n(q),\,n \geq 3;\;\PSU_n(q),\, n\geq 3;\; \PSp_{2n}(q),n\geq 2, q \textrm{ odd}\}$  and 
let $\phi$ be a non-trivial cross-characteristic projective irreducible representation of $L$. 
Then either $\phi$ is a Weil representation, or one of the following occurs.
\begin{itemize}
\item[($A$)]  If $L=\PSL_n(q)$ and $(n,q)\neq (3,2), (3,4), (4,2), (4,3), (6,2), (6,3)$, then  $\dim \phi \geq 
\vartheta^+_n(q)$, where
\begin{equation*}\label{dsl}
\vartheta^+_n(q)=
\begin{cases}
(q-1)(q^2-1)/\gcd(3,q-1) & \textrm{ if } n=3,\\
(q-1)(q^3-1)/\gcd(2,q-1) & \textrm{ if } n=4, \\
(q^{n-1}-1)((q^{n-2}-q)/(q-1)-\kappa_{n-2}) & \textrm{ if  } n\geq 5.
\end{cases}
\end{equation*}
Here, $\kappa_n=1$ if $\ell$ divides $(q^n-1)/(q-1)$ and $0$ otherwise.
Furthermore, if $(n,q)=(4,3), (6,2), (6,3)$, then   $\dim \phi \geq 
\vartheta^+_n(q)$, where
$\vartheta^+_n(q) = 26, 61,362$, respectively.
\item[($B_1$)]  If $L=\PSU_n(q)$, with $n\geq 5$ and $(n,q)\neq (6,2)$, then  $\dim \phi \geq \vartheta^-_n(q)$, where
 \begin{equation*}\label{dsu}
\vartheta^-_n(q)=
\begin{cases}
\frac{q^{n-2} (q-1)(q^{n-2}-q)}{q+1} & \textrm{ if } n \textrm{  is odd},\\
\frac{q^{n-2} (q-1)(q^{n-2}-1)}{q+1} & \textrm{ if } n \textrm{  is even}.\\
\end{cases}
\end{equation*}
\item[$(B_2)$] If $L=\PSU_4(q)$ and $q>3$, then $\dim \phi\geq \vartheta^-_{4}(q)$, where  
$$\vartheta^-_{4}(q)=
\begin{cases}
\frac{(q^2+1)(q^2-q+1)-2}{2} & \textrm{ if } q \textrm{  is odd},\\
(q^2+1)(q^2-q+1)-1 & \textrm{ if } q \textrm{  is even}.\\
\end{cases}$$
\item[$(B_3)$] If  $L=\PSU_3(q)$ and $q\geq 5$, then $\dim \phi \geq \vartheta^-_{3}(q)$, where
$$\vartheta^-_{3}(q)=
\begin{cases}
\frac{(q-1)(q^2+3q+2)}{6} & \textrm{ if } \gcd(3,q+1)=3,\\
\frac{2q^3-q^2+2q-3}{3} &  \textrm{ if } \gcd(3,q+1)=1.
\end{cases}$$
\item[$(C)$] If $L=\PSp_{2n}(q)$ and $(2n,q)\neq (6,2)$, then $\dim \phi\geq \sigma_{n}(q)$, where
$$\sigma_{n}(q)=\frac{(q^n-1)(q^n-q)}{2(q+1)}.$$
\end{itemize}
\end{lemma}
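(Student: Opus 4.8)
The plan is to assemble the statement family by family, since \emph{Lemma \ref{gmst2}} is a synthesis of the classifications of low-dimensional cross-characteristic representations already carried out in \cite{GMST, GT2, GT, HM}. In each of those sources one finds, for the relevant family, the complete list of irreducible projective $\F$-representations whose dimension lies below a suitable threshold. The decisive observation is that, once the Weil representations are deleted from such a list, the smallest surviving degree is exactly the bound $\vartheta^{\pm}_n(q)$ or $\sigma_n(q)$ displayed in the statement. Thus the proof reduces to matching, case by case, the degree formulas quoted in the literature with the ones we record, and to checking that the finitely many exceptional pairs $(n,q)$ appearing in the statement are precisely those where the generic formula fails.

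First I would treat the linear groups $\PSL_n(q)$. For $n\geq 5$ the input is \cite[Theorem 2.1]{GMST}, which describes the first few non-Weil degrees; the quantity $(q^{n-1}-1)\bigl((q^{n-2}-q)/(q-1)-\kappa_{n-2}\bigr)$ is the degree of the second-smallest constituent after the Weil representations, and the correction $\kappa_{n-2}$ records the drop that occurs in the modular setting exactly when $\ell$ divides $(q^{n-2}-1)/(q-1)$, so that the corresponding ordinary character acquires a smaller constituent upon reduction modulo $\ell$. For $n=3,4$ the closed forms in $(A)$ follow directly from the character degrees, and the pairs $(3,2),(3,4),(4,2),(4,3),(6,2),(6,3)$ are singled out because the generic formula breaks down there: three of them receive the explicit values $26,61,362$, while $(3,2),(3,4),(4,2)$ are genuinely excluded and handled separately (these are among the groups of $\mathcal L$).

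Next come the unitary and symplectic families. For $\PSU_n(q)$ with $n\geq 5$ the bound $\vartheta^-_n(q)$ is read off from \cite[Theorem 2.7]{GMST}, while $\PSU_4(q)$ with $q>3$ and $\PSU_3(q)$ with $q\geq 5$ are covered by \cite[Theorem 1.1]{GT2} and \cite{GT}; the $\gcd(3,q+1)$ alternative in $(B_3)$ reflects whether the non-trivial part of the centre of $\SU_3(q)$ is present, which governs which representations descend to $\PSU_3(q)$ and hence which degree is minimal. Finally the symplectic bound $(C)$ comes from \cite[Theorem 1.6]{HM}: since for $q$ odd the Weil representation of $\PSp_{2n}(q)$ splits into two pieces of degrees $(q^n\pm1)/2$, the next smallest degree is $\sigma_n(q)=(q^n-1)(q^n-q)/(2(q+1))$, the pair $(2n,q)=(6,2)$ being recorded as the sole exception. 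The main obstacle throughout is the modular bookkeeping: one must verify that reducing the ordinary characters modulo $\ell$ never creates a non-Weil constituent of dimension below the stated threshold, and that the Weil representations themselves --- which may become reducible modulo $\ell$, and which are treated separately in \cite{DMZ12} --- are correctly removed in every cross-characteristic $\ell$.
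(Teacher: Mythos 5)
Your overall strategy coincides with the paper's: Lemma \ref{gmst2} is given there with no proof at all beyond the sentence ``From \cite[Theorems 2.1 and 2.7]{GMST}, \cite[Theorem 1.1]{GT2}, \cite{GT} and \cite[Theorem 1.6]{HM} we obtain the following'', so assembling the bounds family by family from those classification theorems, after deleting the Weil representations, is exactly what is intended. The one concrete defect in your write-up is that you have matched the sources to the wrong families. The paper \cite{GMST} is a classification for \emph{symplectic and unitary} groups: its Theorem 2.1 is what yields the symplectic bound $(C)$ (together with the splitting of the Weil representation into pieces of degrees $(q^n\pm1)/2$, which you describe correctly), and its Theorem 2.7 yields the unitary bounds. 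The linear case $(A)$ comes instead from \cite[Theorem 1.1]{GT2}, which classifies low-dimensional cross-characteristic representations of the special linear groups; the low-rank unitary cases $(B_2)$ and $(B_3)$ draw on \cite[Theorem 1.6]{HM} (special unitary groups) together with \cite{GMST}; and \cite{GT} concerns even-characteristic symplectic groups, not $\PSU_3$ or $\PSU_4$. As written, your citations for $(A)$, $(B_2)$, $(B_3)$ and $(C)$ point to theorems that do not contain the stated results, and since the entire proof consists of these citations, this is worth fixing; with the attributions corrected, your argument is the paper's own.
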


\subsection{The case $|g|=2$}
 
In this subsection we consider the case $|g|=2$, which yields a significant part of the examples in which almost cyclic
elements do occur. 
   
\begin{lemma}\label{news2a}
Let $L$ be a finite simple classical group and let $G=\langle L,g\rangle$, where $g\in \Aut L$ is an 
involution. Let $\phi$ be a cross-characteristic projective irreducible $\F$-representation  of $G$ which is non-trivial 
on $L$.
Then $\phi(g)$ is almost cyclic if and only if one of the following holds:
\begin{itemize}
\item[(1a)] $G=\PSL_2(4)$,  $\dim \phi=2,3$ and $\ell\neq 2$;
\item[(1b)] $G=\mathrm{P\Sigma L}_2(4)$ and either $\dim \phi= 4$ and $\ell\neq 2,5$ or $\dim \phi=2,3$ and $\ell=5$;
\item[(2a)] $G=\PSL_2(5)$, $\dim \phi=2,3$ and $\ell\neq 5$;
\item[(2b)] $G=\PGL_2(5)$, $\dim \phi= 4$ and $\ell\neq 5$;
\item[(3)] $G=\PSL_2(7)$, $\dim\phi =3$ and $\ell\neq 7$;
\item[(4a)] $G=\PSL_2(9)$,  $\dim \phi = 3$ and $\ell \neq 2,3$;
\item[(4b)] $G=\mathrm{P\Sigma L}_2(9)$ and either $ \dim \phi=5$ and $\ell\neq 2,3$ or $\dim \phi=4$ and $\ell=2$;
\item[(5)] $G=\PSL_3(2)$ and either $\dim \phi =3$ and $\ell\neq 2$ or $\dim \phi=2$ and $\ell=7$;
\item[(6)] $G=\mathrm{P\Sigma L}_3(4)$, $\dim \phi=4$ and $\ell=3$;
\item[(7)] $G=\PSL_4(2).2$, $\dim \phi=7$, $\ell\neq 2$ and $g$ yields a graph automorphism;
\item[(8a)] $G=\PSU_4(2)$, $\dim \phi=4,5$ and $\ell\neq 2$ ({class $2a$}); 
\item[(8b)] $G=\mathrm{P\Sigma U}_4(2)$ and either $\dim \phi=6$ and $\ell\neq 2,3$  or $\dim \phi=5$ and $\ell=3$;
\item[(9)] $G=\PSU_4(3).2_2$, $\dim \phi=6$ and $\ell\neq 3$;
\item[(10a)] $G=\PSp_4(3)$ and either $\dim \phi=4$ and $\ell\neq 3$ (class $2a$)
or $\dim \phi=5$ and  $\ell\neq 2,3$ (class $2a$);
\item[(10b)] $G=\mathrm{PCSp}_4(3)$, $\dim \phi=6$ and $\ell\neq 2,3$;
\item[(11)] $G=\PSp_6(2)$, $\dim \phi=7$ and $\ell\neq 2$ (class $2a$);
\item[(12)] $G=\PSO_8^+(2)$, $\dim \phi=8$ and $\ell\neq 2$.
\end{itemize}
\end{lemma}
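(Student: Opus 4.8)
The plan is to classify the involutions $g\in\Aut L$ for which $\phi(g)$ can be almost cyclic, by combining the dimension bound of Lemma~\ref{uu3} with the lower bounds of Lemmas~\ref{md6} and~\ref{gmst2}, and then to settle the finitely many surviving small cases by direct computation. Since $g$ is an involution, $o(g)\le 2$, so Lemma~\ref{uu3} gives $\dim\phi\le\alpha(g)(o(g)-1)\le\alpha(g)$. Thus the very first step is to read off, from Proposition~\ref{GS1} and Proposition~\ref{GS2}, the small explicit upper bounds on $\alpha(g)$ available for involutions: for classical $L$ of natural dimension $n\ge 5$ one has $\alpha(g)\le n$ (or $n+1$ in the symplectic transvection case), and the low-rank groups $\PSL_{2,3,4}$, $\PSU_{3,4}$, $\PSp_4$ carry their own tabulated bounds. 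So in every case $\dim\phi$ is forced to be at most a linear function of $n$ (roughly $n$, $n+1$, or a little more for graph involutions).

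Next I would confront this linear upper bound with the lower bounds in Lemma~\ref{md6}, which grow like $q^{n}/(q-1)$ or $q^{2n}/q^2$. The key observation is that these lower bounds are exponential (or at least polynomial of high degree) in the rank, whereas the Lemma~\ref{uu3} ceiling $\alpha(g)$ is only linear in $n$. For all but finitely many $(L,q)$ the inequality $\dim\phi\le\alpha(g)$ is therefore incompatible with $\dim\phi\ge(\text{lower bound})$, which rules out almost cyclicity outright. Carrying this out family by family — $\PSL_n$, $\PSU_n$, $\PSp_{2n}$, $\Omega$-types — isolates a short list of surviving pairs $(L,q)$, essentially the groups in the statement together with a handful of nearby candidates that must be eliminated by hand. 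For the Weil-representation families one additionally invokes Lemma~\ref{gmst2}: since the present lemma concerns \emph{all} irreducible representations (Weil or not), one must check that no Weil representation of these groups produces an almost cyclic involution beyond those listed; this is where the sharper non-Weil bounds $\vartheta^{\pm},\sigma$ are used to push the threshold down and keep the surviving list finite.

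For each surviving group on the finite list — the low-rank classical groups and their extensions by graph, diagonal, and field automorphisms — I would then work directly with the character tables and Brauer tables in \cite{Atl} and \cite{MAtl}, identifying the relevant conjugacy classes of involutions (distinguishing, e.g., the $2a$ classes flagged in the statement) and the irreducible representations of the indicated small dimensions. For a candidate $(\phi,g)$, almost cyclicity of $\phi(g)$ is equivalent to an eigenvalue-multiplicity condition: since $g^2=1$, the matrix $\phi(g)$ has eigenvalues $\pm 1$, so $\phi(g)$ is almost cyclic precisely when the multiplicity of $+1$ or of $-1$ is at most one (the other eigenvalue being allowed to repeat). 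These multiplicities are read off from the trace $\chi_\phi(g)$ and $\dim\phi$ via $\dim\phi=m_+ + m_-$ and $\chi_\phi(g)=m_+ - m_-$, so the test reduces to checking that $\min(m_+,m_-)\le 1$, with due care for the reduction $\bmod\,\ell$ when $\ell\mid|G|$. Running this numerical test across the short list produces exactly the enumerated cases (1a)--(12), including the dependence on $\ell$ recorded there.

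\emph{The hard part} will be the bookkeeping in this last step rather than any single deep idea: the extensions $G=\langle L,g\rangle$ must be handled with the correct outer automorphism (graph versus field versus diagonal), the relevant representation of $L$ may split or fuse under the extension, and the eigenvalue multiplicities must be tracked through these inductions and through modular reduction for each exceptional $\ell$. In particular, separating the classes of involutions (the repeated $2a$ annotations) and verifying the precise $\ell$-exceptions — e.g.\ why $\ell=5$ behaves specially for $\mathrm{P\Sigma L}_2(4)$ or $\ell=3$ for $\mathrm{P\Sigma U}_4(2)$ — requires case-by-case consultation of the decomposition matrices rather than a uniform argument. The reverse implication (that each listed case genuinely \emph{is} almost cyclic) is the easy direction, settled by exhibiting the eigenvalue multiplicity explicitly; the forcing direction, proving the list is exhaustive, is where the combination of the generation bounds and the dimension inequalities does the real work.
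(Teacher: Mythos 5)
Your proposal follows essentially the same route as the paper's own proof: the bound $\dim\phi\le\alpha(g)(|g|-1)=\alpha(g)$ from Lemma~\ref{uu3}, combined with the $\alpha$-bounds of Propositions~\ref{GS1}--\ref{GS2} and the lower bounds of Lemma~\ref{md6}, reduces the problem to a short explicit list of groups, which are then settled by inspection of the Atlas/Brauer data (the paper delegates this to GAP, while you spell out the equivalent eigenvalue-multiplicity test for involutions). The only cosmetic difference is your appeal to Lemma~\ref{gmst2}, which is not needed here since the Lemma~\ref{md6} bounds already cover Weil representations; this does not affect correctness.
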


\begin{proof}
(I) Suppose that $\phi(g)$ is almost cyclic.  By Lemma \ref{uu3}, this implies that
\begin{equation}\label{bound2}
\dim \phi\leq \alpha(g)(|g|-1)=\alpha(g).
\end{equation}
An upper bound for the value of $\alpha$ is given by Propositions \ref{GS1} and \ref{GS2}, while a lower bound
for the value of $\dim \phi$ is given in Lemma \ref{md6}.

For the sake of brevity, we give full details of the computations yielding the statement of the lemma only in the case where
$L = \PSL_n(q)$. The other classical groups can be dealt with in a similar way.

Let $L=\PSL_2(q)$, with $7\leq q\neq 9$. Then \eqref{bound2} yields $\frac{q-1}{(q-1,2)}\leq 4$, which holds only
if $q=7$.
If $L\in \{\PSL_2(4),\PSL_2(5)\}$, then $\dim \phi \geq 2$ and $\alpha(g)=3$ if $g \in L$, $\alpha(g)\leq 4$ 
otherwise.
If $L=\PSL_2(7)$, then $\dim \phi\geq 3$ and $\alpha(g)=3$.
If $L=\PSL_2(9)$, then $\dim \phi \geq  3$ and $\alpha(g)=3$, unless $g \not \in L$, in which case $\alpha(g)\leq 5$.

Let  $L=\PSL_3(q)$, with $q\neq 2,4$. In this case \eqref{bound2} yields  $q^2+q-1\leq 4$, which is false.
If $L=\PSL_3(2)$, then $\dim \phi\geq 2$ and $\alpha(g)=3$.
If $L=\PSL_3(4)$, then $\dim \phi \geq 4$ and $\alpha(g)=3$, unless $g \not \in L$, in which case $\alpha(g)=4$.

Let $L=\PSL_4(q)$, with $q\geq 4$. Then \eqref{bound2} yields $q^3+q^2+q-1\leq 6$, which is clearly impossible.
If $L=\PSL_4(2)$, then $\dim \phi\geq 7$ and $\alpha(g)\leq 6$, unless $g \not \in L$, in which case $\alpha(g)\leq 
7$.
If $L=\PSL_4(3)$, then $\dim \phi\geq 26$, whence a contradiction.

Finally, let $L=\PSL_n(q)$, with $n\geq 5$. Then  \eqref{bound2} yields
$\frac{q^n-1}{q-1}-2\leq n$, a contradiction.

By the computations above, we readily get items (1a) to (7) of the statement.
\smallskip

\noindent (II) Conversely, in all the occurrences listed in the statement of the lemma (items (1) to (12)), there 
actually exists
at least one representation $\phi$ of $G$ and a conjugacy class of involutions $g$ such that $\phi(g)$ is  almost
cyclic. These $\phi$'s and these $g$ can be readily determined using \cite{Atl,MAtl} and the GAP package.
\end{proof}

\begin{remar}
The outer automorphism group of the group $L=\PSL_2(9)$ is elementary abelian of order $4$. Let $g$ be an outer 
automorphism of $L$ of order $2$. Then $g$ is either a field automorphism, or a diagonal automorphism, or the product of 
the two. In the first case  $G=\langle L,g\rangle = \mathrm{P\Sigma L}_2(9)$, in the second case $G=\langle 
L,g\rangle = \PGL_2(9)$, in the third case, $G=\langle L,g\rangle$ is isomorphic to $M_{10}$, a maximal subgroup of 
the Mathieu group $M_{11}$, a sporadic simple group. It is also well-known that $\PSL_2(9) \cong\Sp_4(2)' 
\cong \Omega_4^-(3)$, and $\Sp_4(2) \cong \mathrm{P\Sigma L}_2(9)$.
If we view $L $ as $\Sp_4(2)'$ and let $G=\langle L,g\rangle$, where $g\in \Aut L$ is an 
involution, then the above lemma misses the $\F$-representations of $G$ for $\ell=3$. So,  let $\phi$ be a 
cross-characteristic projective irreducible $\F$-representation  of $G$ which is non-trivial on $L$.
Then direct inspection shows that $\phi(g)$ is almost cyclic if and only if one of the following occurs:
\begin{itemize}
\item[(1)] $\dim\phi=3$, $G= \Sp_4(2)'$ and $\ell \neq 2$;
\item[(2)] $\dim\phi=3$, $G= \PGL_2(9)$ and $\ell = 3$;
\item[(3)] $\dim\phi=4$, $G=\Sp_4(2)$, and $\ell=3$;
\item[(4)] $\dim\phi=5$, $G= \Sp_4(2)$, and $\ell \neq 2,3$.
\end{itemize}
\end{remar}

\section{Maximum order of a $p$-element in $\Aut L$}\label{se3}
 
Let $L$ be one of the following groups: $\PSL_n(q)$, where $n>1$ and $(n,q)\neq (2,2), (2,3)$;  
$\PSU_n(q)$, where $n>2$ and $(n,q)\neq(3,2)$; $\PSp_{2n}(q)$, where $n>1$ and $(n,q)\neq(2,2)$. So $L$ is simple. 
Let $r$ be a prime,  and $q=r^\alpha$. 

Recall that every automorphism of $L$ is a product of inner, diagonal, field and graph automorphisms. Let $  A_d $ be 
the subgroup of $ \Aut L$ generated by all  inner and diagonal automorphisms. It is well-known that $A_d$ is normal in 
$\Aut L$. Moreover, $A_d\cong \PGL_n(q)$ or $\PGU_n(q)$ if $L=\PSL_n(q)$ or $\PSU_n(q)$, respectively. 
Thus, 
$|A_d/L|= (q-1,n)$ if $L=\PSL_n(q)$, and $(q+1,n)$ if $L=\PSU_n(q)$. If $ L = \PSp_{2n}(q)$, then $A_d\cong 
\mathrm{PCSp}_{2n}(q)$. Thus,  
$|A_d:L|=2$ for $  q$ odd, and $A_d=L$ for $q$ even.  The group  $\Aut L/A_d$ is abelian. Set $\Phi=\Gal(\F_q/\F_r)$ 
and 
$\Phi_2=\Gal(\F_{q^2}/\F_r)$. If  $L=\PSL_n(q) $, then $\Aut L/A_d\cong  C_2\times \Phi$, where 
$C_2$ is the cyclic group of order $2$. If $L=\PSU_n(q)$, then $\Aut L/A_d\cong \Gal(\F_{q^2}/\F_r)=\Phi_2$. 
If  $L =\PSp_{2n}(q)$, $n>2$, or $n=2$ for $q$ odd, then $\Aut L/A_d\cong \Phi$. 
The case $L=\PSp_4(q)$ with $q$ even 
is more complex. In \cite[Theorem 28]{St} an automorphism $\phi$ of $\Sp_4(q)$, $q$ even, is constructed, such that 
$\phi^2$ is a 
generator of $\Phi$.
 It follows that $\Aut \Sp_4(q)/\Sp_4(q)$ is a cyclic group of order $2|\Phi|\cong \Phi_2$. So, if $g\in \Aut L$ is a 
$2$-element and $|\Phi|_2=2^m$, then $g^{2^{m+1}}\in L$.   
 See \cite[Table 5.1.A]{KL} for details.

Let $p$ be a prime. For any positive integer $m$, we denote by $|m|_p$ the $p$-part of $m$, that is the highest power of $p$ dividing $m$.
Furthermore, for any positive integer $m$, if $p$ does not divide $m$, then $e=e_p(m)$ is defined to be the
minimum integer $i>0$ such that $p$ divides $m^i-1$ if $p>2$. If $p=2$, then $e_2(m)$ is defined to be $1$ if $4\mid (m-1)$
and $2$ if $4\mid (m+1)$. Note that $e \geq 1$, and if $e>1$, then $e< \frac{m^e-1}{m-1}$. 

\begin{lemma}\label{f4t}
Let $p$ be a prime.
\begin{itemize}
\item[(1)] $e_p(q)=e_p(q^{p^k})$. In particular: if $|\F^\times_{q^{p^k}}|_p>1$, 
then $|\F^\times_{q}|_p>1$. 
\item[(2)] Suppose that $p\mid (q-1)$ if $p>2$, and $4\mid (q-1)$ otherwise.
Let $k>0$ be an integer. Then $|\F^\times_{q^{p^k}}|_p=p^k\cdot |\F^\times_q|_p$,
equivalently, $|q^{p^k}-1|_p=p^k|q-1|_p$.
\end{itemize}
\end{lemma}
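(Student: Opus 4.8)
The plan is to reduce everything to elementary facts about $p$-adic valuations, the multiplicative order modulo $p$, and the binomial theorem, treating the cases $p$ odd and $p=2$ separately, as the definition of $e_p$ forces. Throughout write $v_p(m)$ for the exponent of $p$ in $m$, so that $|m|_p=p^{v_p(m)}$.

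For part (1) with $p$ odd, the engine is the iterated Fermat congruence $q^{p^k}\equiv q\pmod p$, obtained by applying $a^p\equiv a\pmod p$ repeatedly (equivalently, the Frobenius $x\mapsto x^p$ fixes $\F_p$ pointwise). Assuming $p\nmid q$, this says that $q$ and $q^{p^k}$ represent the same element of $\F_p^\times$, hence have the same multiplicative order; since $e_p$ is by definition that order, $e_p(q)=e_p(q^{p^k})$ follows at once. The ``in particular'' clause is then immediate: $|\F^\times_{q^{p^k}}|_p>1$ means $p\mid q^{p^k}-1$, i.e. $q^{p^k}\equiv 1\pmod p$, which by the same congruence is equivalent to $q\equiv 1\pmod p$, that is $|\F^\times_q|_p>1$. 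For $p=2$ the congruence survives only modulo $2$, so I argue directly from the definition: when $4\mid(q-1)$ one has $q^{2^k}\equiv 1\pmod 4$, whence $e_2(q^{2^k})=1=e_2(q)$; and the ``in particular'' clause is trivial here, since both $|\F^\times_{q^{2^k}}|_2>1$ and $|\F^\times_q|_2>1$ merely assert that $q$ is odd.

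For part (2) I would induct on $k$, the base case $k=1$ being the heart of the matter (a lifting-the-exponent computation). Under the hypothesis we have $v_p(q-1)\geq 1$ for $p$ odd and $v_p(q-1)\geq 2$ for $p=2$; write $s=v_p(q-1)$ and $q=1+p^su$ with $p\nmid u$, and expand $q^p=(1+p^su)^p$ by the binomial theorem. The linear term equals $p^{s+1}u$, of valuation $s+1$; every other term $\binom{p}{j}p^{sj}u^j$ has strictly larger valuation, using $v_p\binom{p}{j}\geq 1$ for $1\leq j\leq p-1$ together with $s\geq 1$ when $p$ is odd, while for $p=2$ the single remaining term $2^{2s}u^2$ has valuation $2s>s+1$ precisely because $s\geq 2$. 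Hence $v_p(q^p-1)=s+1$, i.e. $|q^p-1|_p=p\,|q-1|_p$. For the inductive step put $q'=q^{p^{k-1}}$; by induction $v_p(q'-1)=v_p(q-1)+(k-1)$ still meets the base-case hypothesis, so applying the base case to $q'$ gives $v_p(q^{p^k}-1)=v_p(q'-1)+1=v_p(q-1)+k$. Since $|\F^\times_q|_p=|q-1|_p$, this is exactly $|\F^\times_{q^{p^k}}|_p=p^k|\F^\times_q|_p$.

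The one genuinely delicate point is the prime $p=2$. Because $(\mathbb{Z}/2^k\mathbb{Z})^\times$ is not cyclic, the clean order-preservation argument of part (1) breaks down (indeed $q^{2^k}\equiv 1\pmod 4$ for every odd $q$ once $k\geq 1$), and the binomial estimate in part (2) needs the stronger congruence $4\mid(q-1)$ rather than merely $2\mid(q-1)$. This is exactly why $e_2$ is defined through divisibility by $4$, and why part (2) hypothesises $4\mid(q-1)$; once this is built in, the remaining work is routine valuation bookkeeping.
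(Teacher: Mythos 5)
Your proof is correct where the statement is, and for part (2) it is strictly more informative than the paper's: the paper establishes (1) by exactly your Fermat's-little-theorem observation ($p\mid(n-1)$ if and only if $p\mid(n^p-1)$, hence $p\mid(q^i-1)$ if and only if $p\mid((q^{p^k})^i-1)$), and then disposes of (2) by a bare citation to an external lemma of Emmett--Zalesski, whereas you supply a complete, self-contained lifting-the-exponent argument. Your base case is sound: writing $|q-1|_p=p^s$ and $q=1+p^su$ with $p\nmid u$, the term $p^{s+1}u$ in $(1+p^su)^p$ is the unique term of minimal positive valuation, the middle binomial terms being handled by $s\geq 1$ for $p$ odd and the last term $2^{2s}u^2$ by $s\geq 2$ for $p=2$; and the induction goes through because the valuation of $q^{p^{k-1}}-1$ only increases, so the hypothesis of the base case persists. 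So (2) is fully proved by your argument, which is a genuine improvement on "see [EZ, Lemma 7.5]".

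The one point you should make explicit concerns part (1) at $p=2$, and it is a defect of the statement rather than of your reasoning. You prove $e_2(q)=e_2(q^{2^k})$ only under the hypothesis $4\mid(q-1)$, and you correctly note that $q^{2^k}\equiv 1\pmod 4$ for every odd $q$ once $k\geq 1$; but you stop short of the conclusion this forces, namely that for $4\mid(q+1)$ and $k\geq 1$ the asserted equality is simply false (take $q=3$, $k=1$: $e_2(3)=2$ while $e_2(9)=1$). The paper's own one-line proof silently overlooks this, because its Fermat argument addresses the condition "$p\mid m^i-1$", which is the definition of $e_p$ only for $p>2$. The lapse is harmless downstream --- every later invocation of part (1) has $p$ odd, and for $p=2$ only the trivial "in particular" clause (both sides just say $q$ is odd) would ever be needed --- but a referee would want you to say plainly that (1) holds as stated for $p$ odd, holds for $p=2$ when $4\mid(q-1)$, and fails for $p=2$, $4\mid(q+1)$, $k\geq 1$, rather than leaving this to be inferred from the remark that "the argument breaks down".
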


\begin{proof}
Fermat's little theorem states that $p$ divides $n^p-n$ for any integer $n$, and hence $p\mid (n-1)$ if and only if 
$p\mid
(n^p-1)$. So $p \mid (q^i-1)$ if and only if $p\mid ((q^{p^k})^i-1)$, whence (1). For (2), see \cite[Lemma 7.5]{EZ}.
\end{proof}

Recall that, for a group $X$ we denote by $\eta_p(X)$ the exponent of a Sylow $p$-subgroup  of $X$.
The following  lemma is well known: for instance, see \cite{Ol}.

\begin{lemma}\label{ge7}
Let $G=\GL_n(q)$, $p$ a prime dividing $|G|$, $(p,q)=1$.
If $e=e_p(q)$, then $\eta_p(G)=p^l\cdot |q^e-1|_p$, where $l$ is such that $p^le\leq n <p^{l+1}e$. 
 \end{lemma}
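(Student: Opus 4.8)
The plan is to identify $\eta_p(G)$ with the largest $p$-power order attained by an element of $G$, and then to compute that order by a root-of-unity count. Since the exponent of a Sylow $p$-subgroup equals the maximal order of a $p$-element of $G$, and since $(p,q)=1$ forces every $p$-element of $\GL_n(q)$ to be semisimple, hence diagonalisable over $\overline{\F}_q$ with eigenvalues of $p$-power order, I would first reduce the problem to the following statement: $\GL_n(q)$ contains an element of order $p^a$ if and only if $d_a\le n$, where $d_a=[\F_q(\zeta):\F_q]$ for a primitive $p^a$-th root of unity $\zeta\in\overline{\F}_q$. The ``if'' direction is constructive --- take the companion matrix of the minimal polynomial of $\zeta$ over $\F_q$, a block of size $d_a$, and pad it with $\Id_{n-d_a}$ --- while for the ``only if'' direction an element of order $p^a$ must have some eigenvalue $\zeta$ of order exactly $p^a$ (the order being the lcm, hence the maximum, of the $p$-power orders of the eigenvalues), and $\zeta$ is a root of the characteristic polynomial, so $d_a=[\F_q(\zeta):\F_q]$, the size of the Frobenius orbit of $\zeta$, divides $n$; in particular $d_a\le n$. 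Consequently $\eta_p(G)=p^{a_{\max}}$ with $a_{\max}=\max\{a:d_a\le n\}$.

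The second step is to compute $d_a$, which is exactly the least $d\ge 1$ with $p^a\mid q^d-1$, i.e. the multiplicative order of $q$ modulo $p^a$. Writing $|q^e-1|_p=p^c$ (so $c\ge 1$, and $c\ge 2$ when $p=2$, by the definition of $e_2(q)$), I would apply the lifting-the-exponent principle in the form of Lemma \ref{f4t}(2), taken with base $q^e$, which satisfies its hypothesis since $p\mid q^e-1$ for $p$ odd and $4\mid q^e-1$ for $p=2$. Writing $d=ej$ (a multiple of $e$ is forced in order that $p\mid q^d-1$), one obtains $|q^{ej}-1|_p=p^c\,|j|_p$, so that $p^a\mid q^d-1$ holds precisely when $|j|_p\ge p^{a-c}$. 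Hence
\[
d_a=\begin{cases} e & \text{if } 1\le a\le c,\\[1mm] e\,p^{\,a-c} & \text{if } a>c. \end{cases}
\]
(The only discrepancy occurs for $p=2$, $e=2$, $a=1$, where in fact $d_1=1$; but since $c\ge 2$ in that situation this value never influences $a_{\max}$.)

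The final step is the optimisation. Since $p$ divides $|G|$ and $(p,q)=1$, some factor $q^i-1$ with $i\le n$ is divisible by $p$, whence $e\le n$ when $p$ is odd; in every case the existence of the stated $l$ encodes $e\le n$, so $l\ge 0$ is well defined and, as $d_c=e\le n$, we get $a_{\max}\ge c\ge 1$. For $a>c$ the condition $d_a\le n$ reads $e\,p^{a-c}\le n$, i.e. $p^{a-c}\le n/e$, which by the choice of $l$ (namely $p^l\le n/e<p^{l+1}$) is equivalent to $a-c\le l$; for $a\le c$ the condition $d_a=e\le n$ always holds. Since $d_a$ is non-decreasing in $a$, this gives $a_{\max}=c+l$ and therefore
\[
\eta_p(G)=p^{a_{\max}}=p^{c+l}=p^l\cdot p^c=p^l\cdot|q^e-1|_p,
\]
as claimed. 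I expect the main obstacle to be a clean, uniform treatment of the prime $p=2$: the special definition of $e_2(q)$ makes the lifting-the-exponent computation behave differently for odd and even exponents $d$, and one must verify both that Lemma \ref{f4t}(2) genuinely applies to the base $q^e$ and that the anomalous small value $d_1$ plays no role in determining $a_{\max}$.
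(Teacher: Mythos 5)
The paper offers no proof of this lemma at all: it is quoted as ``well known'' with a pointer to Olsson \cite{Ol}, so there is no internal argument to compare yours against. Your proof is the standard self-contained one and is essentially correct: reduce $\eta_p(G)$ to the largest order of a $p$-element (necessarily semisimple since $(p,q)=1$), translate the existence of an element of order $p^a$ into the condition $d_a\le n$ where $d_a=[\F_q(\zeta):\F_q]$ for a primitive $p^a$-th root of unity $\zeta$, identify $d_a$ with the multiplicative order of $q$ modulo $p^a$, compute it by lifting the exponent, and optimise; the answer $p^{c+l}$ with $p^c=|q^e-1|_p$ is exactly the claimed value, and your treatment of the $p=2$ anomalies (that $4\mid q^e-1$ so the base $q^e$ is admissible, and that the stray value $d_1=1$ when $e=2$ is harmless because $c\ge 2$) is right. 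Two small points should be tightened. First, $d_a$ need not \emph{divide} $n$ (pad a companion block of size $d_a$ with an identity block); what you use, and what is true, is only that the minimal polynomial of $\zeta$ over $\F_q$ is an irreducible factor of the characteristic polynomial, whence $d_a\le n$. Second, Lemma \ref{f4t}(2) gives $|q^{ep^k}-1|_p=p^k|q^e-1|_p$ only for $p$-power exponents, whereas your formula $|q^{ej}-1|_p=p^c|j|_p$ is asserted for all $j$; to exclude smaller candidates for $d_a$ with a cofactor $j$ not a $p$-power you need in addition that $|q^{ej'}-1|_p=|q^e-1|_p$ for $(j',p)=1$, which holds because $1+x+\cdots+x^{j'-1}\equiv j'\not\equiv 0\pmod p$ when $x\equiv 1\pmod p$. (Alternatively: $d_a$ is the order of $q$ modulo $p^a$, hence for $a\ge c$ it divides $ep^{a-c}$ and is divisible by $d_c=e$, so it is automatically of the form $ep^i$ and only $p$-power cofactors ever arise.) With these two repairs the argument is complete.
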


\begin{corol}\label{ce7}
Let $q=q_0^2$ be odd. Then $\eta_2(\GL_n(q))=2\eta_2(\GL_n(q_0))$.
\end{corol}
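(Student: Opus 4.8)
The plan is to reduce everything to the explicit formula of Lemma \ref{ge7}, applied separately to $\GL_n(q)$ and to $\GL_n(q_0)$, and then to compare the two expressions case by case according to the residue of $q_0$ modulo $4$. The only genuinely moving part is the integer $l$ (or $l'$) governed by the inequalities $p^l e \le n < p^{l+1} e$ in that lemma, so the whole proof comes down to tracking how $e=e_2(\cdot)$ and $l$ change when we pass from $q_0$ to $q=q_0^2$.

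First I would record the value of $e_2(q)$. Since $q_0$ is odd, $q=q_0^2\equiv 1\pmod 8$, so in particular $4\mid (q-1)$ and hence $e_2(q)=1$. Lemma \ref{ge7} then gives $\eta_2(\GL_n(q))=2^l\cdot |q-1|_2$, where $l$ is determined by $2^l\le n<2^{l+1}$. The value of $e_2(q_0)$, by contrast, depends on $q_0\bmod 4$, and this is exactly what splits the argument into two cases.

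In Case~1, where $q_0\equiv 1\pmod 4$, one has $e_2(q_0)=1$, so the integer appearing in Lemma \ref{ge7} for $\GL_n(q_0)$ is the same $l$ as for $\GL_n(q)$, and $\eta_2(\GL_n(q_0))=2^l\,|q_0-1|_2$. Here I would invoke Lemma \ref{f4t}(2) with $p=2$, $k=1$ (legitimate precisely because $4\mid(q_0-1)$), which yields $|q-1|_2=|q_0^2-1|_2=2\,|q_0-1|_2$; substituting gives $\eta_2(\GL_n(q))=2^l\cdot 2\,|q_0-1|_2=2\,\eta_2(\GL_n(q_0))$. In Case~2, where $q_0\equiv 3\pmod 4$, one has $e_2(q_0)=2$, and now $|q-1|_2=|q_0^2-1|_2$ is literally the quantity $|q_0^{\,e_2(q_0)}-1|_2$ occurring in Lemma \ref{ge7} for $\GL_n(q_0)$. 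The relevant exponent $l'$ satisfies $2^{l'}\cdot 2\le n<2^{l'+1}\cdot 2$, i.e. $l'=l-1$, so $\eta_2(\GL_n(q_0))=2^{l-1}|q_0^2-1|_2$, and multiplying by $2$ recovers $2^l|q-1|_2=\eta_2(\GL_n(q))$.

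The step requiring care is the bookkeeping of these exponents, since the factor of $2$ arises from different sources in the two cases: in Case~1 both relevant values of $e$ equal $1$, so the exponents coincide and the factor $2$ comes entirely from $|q-1|_2=2|q_0-1|_2$; in Case~2 the value $e=2$ for $q_0$ shifts the exponent down by one, so the $2$-parts $|q-1|_2=|q_0^2-1|_2$ agree and the factor $2$ comes from $2^l=2\cdot 2^{l-1}$. I would also note explicitly that Case~2 tacitly requires $n\ge 2$ (equivalently $e_2(q_0)\le n$), which is exactly the condition making Lemma \ref{ge7} applicable to $\GL_n(q_0)$ with $l'\ge 0$.
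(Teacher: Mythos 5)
Your proof is correct and follows essentially the same route as the paper's: both apply Lemma \ref{ge7} to $\GL_n(q)$ and $\GL_n(q_0)$ and split into the two cases $4\mid(q_0-1)$ and $4\mid(q_0+1)$, tracking how $e_2$ and the exponent $l$ change (the paper merely organises this by first reducing to $n=2^t$). Your explicit remark that the case $e_2(q_0)=2$ requires $n\geq 2$ is a small but genuine point of care that the paper leaves implicit.
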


\begin{proof}
Suppose first that $n=2^t$, $t>0$. By assumption $4$ divides $q-1$, and by Lemma \ref{ge7}
$\eta_2(\GL_n(q))=2^t|q-1|_2$. If $4$ divides $q_0-1$, then $\eta_2(\GL_n(q_0))=2^t|q_0-1|_2$, whereas if $4$ divides
$q_0+1$, then $\eta_2(\GL_n(q_0))=2^t|q_0+1|_2$. As $|q_0^2-1|_2=2|q_0-1|_2$ in the former case and
$|q_0^2-1|_2=2|q_0+1|_2$ in the latter case, the result follows.  If $2^{t+1}>n>2^t$, then
$\eta_2(\GL_n(q))=\eta_2(\GL_{2^t}(q))=2\eta_2(\GL_{2^t}(q_0))=2\eta_2(\GL_n(q_0))$, whence the statement.
\end{proof}

\begin{lemma}\cite[Lemma 3.2]{ZalM}\label{s33} 
Let $p$ be an odd prime and $n>1$. Let $G=\SL_n(q)$, with $p\mid (q-1)$ or $G=\SU_n(q)$, with $p\mid (q+1)$.
Then $G$ has no irreducible $p$-element. 
\end{lemma}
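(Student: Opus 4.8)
The plan is to read ``irreducible $p$-element'' as a $p$-element acting irreducibly on the natural module, and to derive a contradiction by showing that such an element cannot have determinant $1$. I treat the two cases uniformly by setting $G=\SL_n(q)\le\GL_n(q)$ acting on $\F_q^n$ and $Q=q$ in the linear case, and $G=\SU_n(q)\le\GU_n(q)\le\GL_n(q^2)$ acting on $\F_{q^2}^n$ and $Q=q^2$ in the unitary case; in both cases $G$ consists of the elements of determinant $1$ (the determinant taken over $\F_Q$) inside $\GL_n(q)$, resp.\ $\GU_n(q)$. The hypotheses ($p\mid q-1$, resp.\ $p\mid q+1$ with $p$ odd) give $p\mid Q-1$ in both cases: indeed in the unitary case $p$ odd and $p\mid q+1$ force $p\nmid q-1$, so $|Q-1|_p=|q^2-1|_p=|q+1|_p$. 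Write $|Q-1|_p=p^a$, so $a\ge1$. Suppose $g\in G$ is an irreducible $p$-element; as $n>1$ the identity is not irreducible, so $|g|=p^k$ with $k\ge1$, and it suffices to show $\det g\neq1$.

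Since $g$ acts irreducibly on the natural module, its characteristic polynomial over $\F_Q$ is irreducible of degree $n$; hence the natural module becomes isomorphic to $\F_{Q^n}$ as an $\F_Q[g]$-module, and $g$ acts as multiplication by an element $\lambda\in\F_{Q^n}^\times$ of order $p^k$ that generates $\F_{Q^n}$ over $\F_Q$. Consequently $n=[\F_Q(\lambda):\F_Q]=\mathrm{ord}_{p^k}(Q)$, the multiplicative order of $Q$ modulo $p^k$. I determine this order from Lemma \ref{f4t}(2), which applies with base $Q$ because $p\mid Q-1$: it yields $|Q^{p^j}-1|_p=p^{j+a}$ for every $j\ge0$. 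Thus $p^k\mid Q^{p^{k-a}}-1$, so $n\mid p^{k-a}$ and in particular $n$ is a power of $p$; moreover $|Q^{p^{k-a-1}}-1|_p=p^{k-1}<p^k$, so $n\nmid p^{k-a-1}$. Therefore $n=p^{k-a}$, and the hypothesis $n>1$ forces $k>a$.

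It remains to compute the determinant, which is the norm $\det g=N_{\F_{Q^n}/\F_Q}(\lambda)=\lambda^{M}$ with $M=(Q^n-1)/(Q-1)$. By the previous step $|Q^n-1|_p=|Q^{p^{k-a}}-1|_p=p^{k}$, so $|M|_p=|Q^n-1|_p/|Q-1|_p=p^{k}/p^{a}=p^{k-a}$. Since $\lambda$ has order $p^k$ and $|M|_p=p^{k-a}\le p^k$, the element $\lambda^M$ has order $p^k/\gcd(p^k,M)=p^k/p^{k-a}=p^{a}\ge p>1$. Hence $\det g=\lambda^M\neq1$, contradicting $g\in G$. This proves that $G$ has no irreducible $p$-element.

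The computation itself is short; the points requiring care are the uniform reduction of the unitary case to the same norm computation by passing to $Q=q^2$ (and verifying $p\mid Q-1$ via $|q-1|_p=1$, so that Lemma \ref{f4t}(2) is available), and the fact that irreducibility pins $n=\mathrm{ord}_{p^k}(Q)$ to the exact $p$-power $p^{k-a}$. The genuinely essential hypothesis is that $p$ is odd: for $p=2$ the lifting-the-exponent identity of Lemma \ref{f4t}(2) fails, and the groups in question do contain irreducible $2$-elements, so the conclusion is false without it.
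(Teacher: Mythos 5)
Your argument is correct. Note that the paper itself offers no proof of this lemma --- it is imported verbatim from \cite[Lemma 3.2]{ZalM} --- so there is no internal proof to compare against; your norm computation is the standard argument for this fact, and it has the virtue of being self-contained modulo the paper's own Lemma \ref{f4t}(2). All the key steps check out: irreducibility of $g$ on the natural module identifies that module with $\F_{Q^n}$ and pins $n=\mathrm{ord}_{p^k}(Q)$ down to the exact power $p^{k-a}$ via lifting the exponent; the determinant is the field norm $\lambda^{(Q^n-1)/(Q-1)}$, whose exponent has $p$-part $p^{k-a}$, so $\det g$ has order $p^{a}>1$, contradicting $g\in G$. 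The reduction of the unitary case to $Q=q^2$ is also handled correctly, including the point that $p$ odd and $p\mid(q+1)$ force $|q^2-1|_p=|q+1|_p$, which is what makes Lemma \ref{f4t}(2) applicable with base $Q$. Two minor presentational remarks: you write $n=p^{k-a}$ before having excluded $k\le a$; it is cleaner to first observe that $k\le a$ would give $p^k\mid(Q-1)$ and hence $n=1$, contradicting $n>1$, and only then apply the exponent identity with $j=k-a\ge 1$. Also, your final sentence correctly isolates where $p$ odd is indispensable (the failure of the identity $|Q^{2^j}-1|_2=2^j|Q-1|_2$ when $4\nmid(Q-1)$, and the existence of irreducible $2$-elements such as elements of order $4$ in $\SL_2(q)$ for $q\equiv 3\pmod 4$), which is exactly why the paper needs the separate caveat in Lemma \ref{za4bis} for $p=2$.
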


\begin{lemma}\label{za4}
Let $q=q_0^{p^k}$, $k>0$, and  $G=G(q)\in \{\GL_n(q), \SL_n(q), \GU_n(q),$ $\SU_n(q), \Sp_{2n}(q), 
\Omega_{2n+1}(q), \Omega_{2n}^\pm(q)\}$. 
Let $\Psi$ be the group of field automorphisms of $G$ of order $p^k$, $p>2$, and set 
 $H=G\cdot \Psi$. Then the following holds:
\begin{itemize} 
\item[(1)] $\eta_p(H)= \max\{p^{(k-i)}\eta_p(G(q_0^{p^i})): 0\leq i \leq k\}$.
\item[(2)] If $p\mid q$, then $\eta_p(H)= p^k\cdot \eta_p(G)$.
\item[(3)] If $p\nmid q$, then $\eta_p(H)= \eta_p(G)$.
\item[(4)] If $p \nmid q$, then $\eta_p(L)=\eta_p(L\cdot \Psi)$, where
$L$ is the simple non-abelian composition factor of $G$.
\end{itemize}
\end{lemma}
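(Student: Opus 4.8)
Since $[H:G]=|\Psi|=p^k$ is a $p$-power, every Sylow $p$-subgroup $P$ of $H$ meets $G$ in a Sylow $p$-subgroup $P_0=P\cap G$ of $G$ and surjects onto $\Psi$, so $\eta_p(H)=\exp(P)$ with $P=P_0\rtimes\lan\sigma\ran$ for a suitable field automorphism $\sigma$ of order $p^k$ normalising $P_0$. Here $\sigma\colon x\mapsto x^{q_0}$, so $\tau:=\sigma^{p^i}$ has order $p^{k-i}$ and fixed field $\F_{q_0^{p^i}}$, whence its centraliser in the ambient algebraic group $\GG$ is $C_\GG(\tau)=G(q_0^{p^i})$. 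The plan for (1) is to compute the order of a $p$-element $h=u\sigma^a\in P$ mapping to an element of $\Psi$ of order $p^j$ via a twisted-power (norm) map, and then to show that the resulting bound is attained. Parts (2)--(4) will follow from (1) by evaluating the maximum using the arithmetic Lemmas \ref{f4t} and \ref{ge7}.

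\textbf{Upper bound in (1).} Let $h=u\sigma^a$ with $\sigma^a$ of order $p^j$; then $\gcd(p^k,a)=p^{k-j}$, so writing $i=k-j$ we have $C_\GG(\sigma^a)=G(q_0^{p^i})$. Put $\tau=\sigma^a$ and $N(u)=u\cdot{}^{\tau}u\cdots{}^{\tau^{p^j-1}}u$. Since $\tau^{p^j}=1$, a direct expansion gives $h^{p^j}=N(u)\in G$, and from $h N(u) h^{-1}=N(u)$ one obtains ${}^{\tau}N(u)=u^{-1}N(u)\,u$; that is, $F_\tau\bigl(N(u)\bigr)=u^{-1}N(u)\,u$, where $F_\tau$ is the Frobenius endomorphism $x\mapsto x^{q_0^{p^i}}$ of $\GG$. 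By the Lang--Steinberg theorem there is $c\in\GG$ with $u=c^{-1}F_\tau(c)$, and substituting yields $c\,N(u)\,c^{-1}\in\GG^{F_\tau}=G(q_0^{p^i})$. Thus $N(u)$ is conjugate to an element of $G(q_0^{p^i})$, and as $h$ is a $p$-element so is $N(u)=h^{p^j}$; hence $|h|=p^{j}\,|N(u)|\leq p^{k-i}\,\eta_p\bigl(G(q_0^{p^i})\bigr)$. Taking the maximum over $j=1,\dots,k$ (i.e. $0\leq i\leq k-1$) together with the case $h\in G$ (i.e. $i=k$) gives $\eta_p(H)\leq\max\{p^{k-i}\eta_p(G(q_0^{p^i})):0\leq i\leq k\}$.

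\textbf{Lower bound in (1) --- the main obstacle.} For the reverse inequality one must realise each term $p^{k-i}\eta_p(G(q_0^{p^i}))$ by an actual $p$-element of $H$. The point is that the norm map is, up to $\GG$-conjugacy, surjective onto the $p$-elements of $\GG^{F_\tau}=G(q_0^{p^i})$: given a $p$-element $w$ of maximal order $\eta_p(G(q_0^{p^i}))$ in that subfield subgroup, the Lang--Steinberg theorem (applied now to produce a preimage under $N$) furnishes $u\in G$ with $N(u)$ conjugate to $w$, and for such $u$ the element $h=u\tau$ is a $p$-element of order $p^{k-i}|w|=p^{k-i}\eta_p(G(q_0^{p^i}))$. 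Equivalently, inside a $\tau$-stable subfield subgroup one finds a section isomorphic to a wreath product $A\wr C_{p^{k-i}}$ with $A$ a Sylow $p$-subgroup of $G(q_0^{p^i})$, for which the standard exponent identity $\eta_p(A\wr C_{p^{k-i}})=p^{k-i}\eta_p(A)$ produces the required element. This surjectivity-of-the-norm step, rather than the clean power computation above, is where the work lies; it establishes equality in (1).

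\textbf{Deduction of (2)--(4).} If $p\mid q$ then $p=r$ is the defining characteristic and $\eta_p(G(Q))$ is the exponent of a unipotent Sylow subgroup, which depends only on the (twisted) rank and on $p$, not on $Q$; hence $\eta_p(G(q_0^{p^i}))=\eta_p(G)$ for all $i$ and (1) gives $\eta_p(H)=\max_i p^{k-i}\eta_p(G)=p^k\eta_p(G)$, which is (2). If $p\nmid q$ (and $p>2$), set $e=e_p(q_0)$; by Lemma \ref{f4t}(1) we have $e=e_p(q_0^{p^i})$ for every $i$, so the exponent $l$ occurring in Lemma \ref{ge7} (and in its analogues for $\GU$, $\Sp$, $\Omega$) is the same for all $i$, while Lemma \ref{f4t}(2), applied to the base $q_0^{e}$ with $p$ odd, yields $|q_0^{e p^i}-1|_p=p^i|q_0^{e}-1|_p$. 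Consequently $\eta_p(G(q_0^{p^i}))=p^{\,i}\,\eta_p(G(q_0))$, so each term $p^{k-i}\eta_p(G(q_0^{p^i}))=p^k\eta_p(G(q_0))=\eta_p(G)$ is independent of $i$; thus $\eta_p(H)=\eta_p(G)$, proving (3). Finally, (4) follows by running the identical argument with $G$ replaced by the simple factor $L$: the upper bound of the second paragraph and the lower bound of the third hold verbatim for $L$, and the arithmetic of the last step applies because the $p$-part of the central factor relating $G$ and $L$ (a divisor of $\gcd(n,q\mp1)$) scales in the same way, so $\max_i p^{k-i}\eta_p(L(q_0^{p^i}))=\eta_p(L)=\eta_p(L\cdot\Psi)$.
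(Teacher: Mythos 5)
Your parts (1)--(3) are correct in substance but take a genuinely different route from the paper's. The paper disposes of (1) by citing Zavarnitsin's Corollary 14 (together with its extension noted in \cite{V17} and the containment $G(q_0)\subseteq G(q_0^p)$ for $p$ odd), of (2) by the known constancy of the unipotent exponent over subfield subgroups, and of (3) by \cite[Lemma 3.10]{V17}. You instead reprove (1) from scratch via the norm map $h^{p^j}=N(u)$ and the Lang--Steinberg theorem, and then deduce (3) from (1) by the arithmetic of Lemmas \ref{f4t} and \ref{ge7}; this is more self-contained and the upper-bound computation is sound. Two caveats: the lower bound in (1) --- surjectivity of the norm map onto the $p$-classes of $G(q_0^{p^i})$ --- is exactly the content of the result the paper cites, and you assert it rather than prove it, so your (1) is incomplete at the one point where self-containment would matter; and in (3) you should say explicitly that the analogues of Lemma \ref{ge7} for $\GU$, $\Sp$ and $\Omega$ (Weir) are being invoked, and that the twisted subfield subgroups $G(q_0^{p^i})\subseteq G(q)$ exist because $p$ is odd.

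Part (4) has a genuine gap. The argument cannot be run ``verbatim with $G$ replaced by $L$'': the simple factor $L$ (say $\PSL_n(q)$) is a central quotient of $\SL_n(q)$, not a fixed-point subgroup of a connected algebraic group, so the Lang--Steinberg machinery does not apply to it directly; more importantly, passing to the quotient by the centre can in principle lower the exponent of a Sylow $p$-subgroup, and $L$ need not contain a copy of a Sylow $p$-subgroup of $G$. This failure occurs precisely when $n$ is a $p$-power and $p$ divides $\gcd(n,q-1)$ (resp.\ $\gcd(n,q+1)$ in the unitary case), and these cases are exactly what the paper's proof of (4) is devoted to: it uses Lemma \ref{s33} (every $p$-element of $\SL_n(q)$, $\SU_n(q)$ is reducible) to obtain $\eta_p(\SL_n(q))\leq\eta_p(\GL_{n/p}(q))$, and an embedding $\GL_{n/p}(q)\to\SL_n(q)$ meeting the scalars trivially to obtain $\eta_p(\PSL_n(q))\geq\eta_p(\GL_{n/p}(q))$, whence $\eta_p(\PSL_n(q))=\eta_p(\SL_n(q))$, and only then applies part (3). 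Your appeal to ``the $p$-part of the central factor scaling in the same way'' does not substitute for this: the exponent of a quotient group is not determined by the order of the kernel, so you have not established $\eta_p(L)=\eta_p(G)$, on which the asserted equality $\eta_p(L)=\eta_p(L\cdot\Psi)$ ultimately rests.
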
 

\begin{proof}
(1) The statement follows from \cite[Corollary 14]{Zav} and its immediate generalisation to any $G(q)$ as remarked in
\cite{V17}, 
taking into account the well known fact that, as $p$ is odd, $G(q_0)\subseteq G(q_0^{p})$ (see \cite[Table 4.5A]{KL}).\\
(2) The statement follows from (1) and the known fact that  $\eta_p(G(q_0^{p^i}))=\eta_p(G(q_0))$ for $i=1,\ldots, k$. 
E.g., 
for an even stronger statement, \cite[Lemma 2.32]{TZ3}.\\
(3) The statement follows from \cite[Lemma 3.10]{V17}. \\
(4) It is well known that $L$ contains a subgroup isomorphic to a Sylow $p$-subgroup of $G$, unless $n$ is a $p$-power,  
$L=\PSL_n(q)$
and $p$ divides $(n,q-1)$ or  $L=\PSU_n(q)$ and $p$ divides $(n,q+1)$. Let us consider the exceptional cases. By 
Lemma 
\ref{s33}, every $p$-element  of $\SL_n(q)$ and $\SU_n(q)$ is reducible. It follows that    $\eta_p(\SL_n(q))\leq 
\eta_p(\GL_{n/p}(q))$ 
and $\eta_p(\SU_n(q))\leq \eta_p(\GU_{n/p}(q))$. Moreover, there is an embedding 
$j: \GL_{n/p}(q)\to \SL_n(q)$ such 
that 
$j(\GL_{n/p}(q))$ contains no scalar matrix (except the identity), which in turn yields an embedding 
$j: \GL_{n/p}(q)\to \PSL_n(q)$. Thus $\eta_p(\PSL_n(q))=\eta_p(\GL_{n/p}(q))=\eta_p(\SL_n(q))$. Similarly, we have 
$\eta_p(\PSU_n(q))=\eta_p(\GU_{n/p}(q))=\eta_p(\SU_n(q))$.  By (3), $\eta_p( L\cdot \Psi)=\eta_p
(\SL_n(q)\cdot \Psi)=\eta_p(\SL_n(q))=\eta_p(L)$ for $L=\PSL_n(q)$, and similarly for $L=\PSU_n(q)$. 
\end{proof}

\begin{lemma}\label{za4bis} 
Suppose $q=q_0^{2^k}$ where $k>0$ and $q$ is odd. Let $G = G(q) \in \{\GL_n(q),$ $\SL_n(q),\PSL_n(q)\}$ and let $\Psi$ be the group of
field automorphisms of $G$ of order $2^k$. Set  $H=G\cdot \Psi$. Then  $\eta_2(H)= \eta_2(G)$.
  \end{lemma}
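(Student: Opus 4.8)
The plan is to reduce the statement to a single monotonicity inequality comparing the $2$-exponents of the groups $G(q_0^{2^i})$ for $0\le i\le k$, and then to verify that inequality separately for $\GL_n$, $\SL_n$ and $\PSL_n$.

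Write $\Psi=\lan\sigma\ran$, where $\sigma\colon x\mapsto x^{q_0}$ is the field automorphism of order $2^k$, so that every element of $H=G\cdot\Psi$ has the form $g\sigma^{j}$ with $g\in G$. The first step is an elementary upper bound. If $y=g\sigma^{j}$ is a $2$-element whose image in $\Psi$ has order $2^{k-i}$, then $y^{2^{k-i}}$ equals the product $\prod_{t=0}^{2^{k-i}-1}\sigma^{jt}(g)$, which is fixed by $\sigma^{j}$ and hence lies in the fixed-field subgroup $G(q_0^{2^i})$; being a power of $y$ it is a $2$-element there, so $|y|$ divides $2^{k-i}\,\eta_2\bigl(G(q_0^{2^i})\bigr)$. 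Taking the maximum over $i$ gives $\eta_2(H)\le\max_{0\le i\le k}2^{k-i}\eta_2\bigl(G(q_0^{2^i})\bigr)$, while $G=G(q_0^{2^k})\le H$ gives $\eta_2(H)\ge\eta_2(G)$. Therefore it suffices to establish, for every odd $Q=q_0^{2^i}$, the inequality $\eta_2\bigl(G(Q^2)\bigr)\ge 2\,\eta_2\bigl(G(Q)\bigr)$: this makes the sequence $i\mapsto 2^{k-i}\eta_2(G(q_0^{2^i}))$ non-decreasing, so its maximum is the term $i=k$, namely $\eta_2(G)$, and the desired equality $\eta_2(H)=\eta_2(G)$ follows.

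For $G=\GL_n$ this inequality is exactly Corollary \ref{ce7}. For $G=\SL_n$ I would first locate $\eta_2(\SL_n(Q))$ relative to $\eta_2(\GL_n(Q))$. By Lemma \ref{ge7} the maximal order of a $2$-element of $\GL_n(Q)$ is $2^a=|Q^{e2^l}-1|_2$, with $e=e_2(Q)$ and $e2^l\le n<e2^{l+1}$, attained on a Singer cycle of a $\GL_{e2^l}(Q)$. If $n$ is not a power of $2$ then $e2^l<n$, and placing such a Singer $2$-element $s$ in one block and the $2$-element $\diag(\det(s)^{-1},\Id)$ in a complementary $\GL_{n-e2^l}(Q)$ yields a determinant-$1$ $2$-element of order $2^a$; hence $\eta_2(\SL_n(Q))=\eta_2(\GL_n(Q))$ and the required inequality is again Corollary \ref{ce7}. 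If $n=2^s$ then $e2^l=n$, and any $2$-element of order $2^a$ must be irreducible, since its order-$2^a$ eigenvalue has degree $n$ over $\F_Q$ (the multiplicative order of $Q$ modulo $2^a$ is $n$, by the $2$-adic valuations of Lemma \ref{f4t}); such a Singer element has determinant of order $|Q-1|_2\ge 2$, so $\eta_2(\SL_n(Q))\le 2^{a-1}$. The reverse bound comes from the restriction-of-scalars embedding $\SL_2(Q^{2^{s-1}})\hookrightarrow\SL_n(Q)$ together with the fact that a Sylow $2$-subgroup of $\SL_2(Q^{2^{s-1}})$ is generalized quaternion of exponent $|Q^{2^s}-1|_2/2=2^{a-1}$. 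Thus $\eta_2(\SL_n(Q))=\eta_2(\GL_n(Q))/2$ precisely when $n$ is a power of $2$; since this condition is independent of $Q$, the inequality $\eta_2(\SL_n(Q^2))=2\,\eta_2(\SL_n(Q))$ follows once more from Corollary \ref{ce7}.

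For $G=\PSL_n$ I would pass to the quotient $\SL_n(Q)/Z$, $Z=\mu_{(n,Q-1)}\Id$, and transfer the previous computation, the model case being $\PSL_2(Q)$ with its dihedral Sylow $2$-subgroup of exponent $|Q^2-1|_2/4$. The maximal $2$-element order of $\PSL_n(Q)$ equals that of $\SL_n(Q)$ divided by $|\lan x\ran\cap Z|$ for a maximal $2$-element $x$; the key point, to be checked on the distinguished maximal elements produced above, is that $\lan x\ran$ meets $Z$ only in $\{\pm\Id\}$, so the extra factor is exactly $2$ for all $Q$ and the monotonicity inequality survives. The main obstacle is exactly this $n=2^s$ analysis for $\SL_n$ and $\PSL_n$: here the extremal $2$-element lies in no split torus, and one is forced to exploit the non-abelian (generalized quaternion, resp. dihedral) structure of the Sylow $2$-subgroup through the restriction-of-scalars copy of $\SL_2$, and, for $\PSL_n$, to keep careful track of how the $2$-part $(n,Q-1)_2$ of the centre meets the cyclic group generated by the extremal element.
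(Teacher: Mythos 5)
Your overall architecture is the same as the paper's: reduce to the formula $\eta_2(H)=\max_i 2^{k-i}\eta_2(G(q_0^{2^i}))$ (which the paper simply cites from \cite[Corollary 14]{Zav}), verify the resulting monotonicity via Corollary \ref{ce7} for $\GL_n$, and then compare the $2$-exponents of $\SL_n$ and $\PSL_n$ with those of general linear groups. Your $\GL_n$ and $\SL_n$ steps are correct (and your treatment of $\SL_{2^s}$ via the irreducibility/determinant computation and the quaternion Sylow subgroup of $\SL_2(Q^{2^{s-1}})$ is a legitimate alternative to the paper's route through Lemma \ref{s33} and the scalar-free embedding of $\GL_{n/2}$). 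However, the $\PSL_n$ step contains a genuine error. The claimed identity $\eta_2(\PSL_{2^s}(Q))=\eta_2(\SL_{2^s}(Q))/2$ is false for $s\geq 2$. Take $n=4$, $Q=3$: let $S\in\GL_2(3)$ represent multiplication by an element $\zeta$ of order $8$ in $\F_9^\times$, so $\det S=\zeta^{1+3}=-1$, and set $y=\diag(S,-1,1)\in\SL_4(3)$. Then $|y|=8=\eta_2(\SL_4(3))$, but $y^{4}=\diag(-\Id_2,1,1)$ is not scalar, so the image of $y$ in $\PSL_4(3)$ still has order $8$; hence $\eta_2(\PSL_4(3))=\eta_2(\SL_4(3))=8$, not $4$. (The paper itself records $\eta_2(\PSL_4(3))=8$ in the proof of Lemma \ref{newp2p}, and its own chain $\eta_2(\PSL_n(q))=\eta_2(\GL_{n/2}(q))=\eta_2(\SL_n(q))$ says equality, not division by $2$.) The underlying flaw is the principle you invoke: the exponent of the quotient is $\max_x |x|/|\lan x\ran\cap Z|$ over \emph{all} $2$-elements $x$, not $\eta_2(\SL_n(Q))$ divided by $|\lan x\ran\cap Z|$ for one distinguished maximal $x$. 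Checking the distinguished elements only gives the lower bound $\eta_2(\PSL)\geq\eta_2(\SL)/2$; the upper bound half of your claim is never established, and is in fact wrong, because for $s\geq 2$ there are reducible elements of maximal order $2^{a-1}$ in $\SL_{2^s}(Q)$ whose cyclic group meets the centre trivially. (For $n=2$ your dihedral computation is correct, and there the factor really is $2$.)

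Since you need a genuine \emph{upper} bound on $\eta_2(\PSL_n(Q))$ to run the monotonicity inequality $\eta_2(\PSL_n(Q^2))\geq 2\eta_2(\PSL_n(Q))$, this is not a cosmetic slip: as written, the $\PSL$ case does not close. The fix is to prove instead that $\eta_2(\PSL_{2^s}(Q))=\eta_2(\SL_{2^s}(Q))$ for all odd $Q$ when $s\geq 2$ (the lower bound is exactly the element $\diag(S,\det(S)^{-1},\Id)$ above; the upper bound is trivial), after which monotonicity for $\PSL$ follows from your $\SL$ computation; this is essentially what the paper does via the embedding $\GL_{n/2}(q)\to\SL_n(q)$ avoiding non-trivial scalars. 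A secondary, more minor point: in your derivation of the upper bound for $\eta_2(H)$, the norm $y^{2^{k-i}}=\prod_t\sigma^{jt}(g)$ is not fixed by $\sigma^j$ but by the $g$-twisted automorphism $x\mapsto g^{-1}\sigma^j(x)g$; placing it inside a conjugate of $G(q_0^{2^i})$ requires a Lang--Steinberg argument (or the cited result of Zavarnitsin), and for $\PSL_n$, which is not the full fixed-point group of the adjoint algebraic group, this needs an extra word.
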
 
  
\begin{proof}
By \cite[Corollary 14]{Zav}, $\eta_2(H)= \max\{2^{(k-i)}\eta_2(G(q_0^{2^i})): 0\leq i \leq k\}$. For $G = \GL_n(q)$, 
the
statement then follows by iterated application of Corollary \ref{ce7}. 
 Otherwise, the argument used to prove item (4) of Lemma \ref{za4} 
 remains valid for $p=2$. (As $k>0$, Lemma \ref{s33} remains valid for $n=p=2$.) So  
$\eta_2(\PSL_n(q))=\eta_2(\GL_{n/2}(q))=\eta_2(\SL_n(q))$. 
Again by Corollary \ref{ce7}, we get 
$\eta_2(\GL_{n/2}(q))= 2^{k-i}\eta_2(\GL_{n/2}(q_0^{2^{i}}))$  for every $i$.  It follows that $\eta_2(H) = 
\eta_2(\GL_{n/2}(q)) = \eta_2(G)$, as claimed.
\end{proof}

\begin{lemma}\label{nn9}
Let  $G=\GL_n(q)$,  $q$ odd, and suppose that  $2^t\leq n<2^{t+1}$. Then $\eta_2(G)=2^t|q-1|_2$ if  $4\mid (q-1)$ and 
$2^t|q+1|_2$ if  $4\mid (q+1)$.
\end{lemma}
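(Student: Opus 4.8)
The plan is to read the result directly off Lemma \ref{ge7} applied to the prime $p=2$, the only real content being the evaluation of $e_2(q)$ and a short simplification of the $2$-part of $q^2-1$. By Lemma \ref{ge7}, for $G=\GL_n(q)$ with $(2,q)=1$ one has $\eta_2(G)=2^l\cdot|q^e-1|_2$, where $e=e_2(q)$ and $l$ is the integer determined by $2^l e\leq n<2^{l+1}e$. Thus the whole argument reduces to splitting according to the value of $e_2(q)$ prescribed by its definition and then matching $l$ against the hypothesis $2^t\leq n<2^{t+1}$.

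First I would treat the case $4\mid(q-1)$. Here the definition of $e_2$ gives $e_2(q)=1$, so the inequality $2^l e\leq n<2^{l+1}e$ becomes $2^l\leq n<2^{l+1}$, forcing $l=t$. Lemma \ref{ge7} then yields $\eta_2(G)=2^t\cdot|q-1|_2$, which is exactly the first assertion.

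Next I would treat the case $4\mid(q+1)$, where the definition gives $e_2(q)=2$. Now $2^l e\leq n<2^{l+1}e$ reads $2^{l+1}\leq n<2^{l+2}$, so that $l+1=t$, i.e.\ $l=t-1$, and Lemma \ref{ge7} gives $\eta_2(G)=2^{t-1}\cdot|q^2-1|_2$. To finish, I would factor $q^2-1=(q-1)(q+1)$ and observe that, since $q$ is odd with $4\mid(q+1)$, the factor $q-1$ is congruent to $2$ modulo $4$, whence $|q-1|_2=2$ and therefore $|q^2-1|_2=2\,|q+1|_2$. Substituting gives $\eta_2(G)=2^{t-1}\cdot 2\,|q+1|_2=2^t\,|q+1|_2$, as required.

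There is essentially no serious obstacle: the statement is a direct corollary of Lemma \ref{ge7}. The only points demanding a little care are the bookkeeping of $l$ in the second case (which requires $t\geq 1$, i.e.\ $n\geq 2=e$, the range in which the lemma is applied) and, more conceptually, the recognition that the exponent feeding into Lemma \ref{ge7} is the specially-defined $e_2(q)$ rather than the naive multiplicative order of $q$ modulo $2$. It is precisely the convention $e_2(q)=2$ for $4\mid(q+1)$, together with the elementary congruence $q-1\equiv 2\pmod 4$, that produces the factor $|q+1|_2$ in the conclusion.
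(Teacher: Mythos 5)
Your proposal is correct and follows essentially the same route as the paper's own proof: both apply Lemma \ref{ge7} with $p=2$, split on $e_2(q)=1$ or $2$, match $l$ with $t$, and use $|q^2-1|_2=2|q+1|_2$ in the second case. Your remark that the second case implicitly needs $n\geq 2$ is a sensible (and harmless) extra observation.
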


\begin{proof}
By Lemma \ref{ge7}, $\eta_2(G)=|q^e-1|_22^l $, where $2^le\leq n<2^{l+1}e$. If $e=1$, then $t=l$, and the statement
follows.  Let $e=2$. Then
 $|q^2-1|_2=|q-1|_2|q+1|_2=2|q+1|_2$ and $2^{l+1} \leq n < 2^{l+2}$. So $l=t-1$, whence  the statement.
\end{proof}

\section{The groups $\PSL_2(q)$}\label{PSL2}

In this section we deal with the groups $L=\PSL_2(q)$.
Set $G = \lan L,g \ran$, where  $1\neq g\in \Aut L$ is a $p$-element, for some prime $p$. Note that here we may assume 
$g^2\neq 1$ in view of Lemma \ref{news2a} and $ g\in (G\setminus \PGL_2(q))$, as the case $g\in \PGL_2(q)$ is settled 
in 
\cite{DMZ12}. Furthermore, recall that $L=\PSL_2(q)$ has no graph automorphism, and therefore $\Aut L/\PGL_2(q)\cong
\Gal( \F_q/\F_r)$. As above, let $\Phi=\Gal( \F_q/\F_r)$ and $p^m=|\Phi|_p$.

\begin{theo}\label{t22} 
Let $L=\PSL_2(q)$, $q\geq 4$, and let $G = \lan L,g \ran$, where  $1\neq g\in \Aut L$ is a $p$-element, for some 
prime 
$p$ and $g \not \in \PGL_2(q)$. Let $\phi$ be a cross-characteristic irreducible projective $\F$-representation 
of $G$, which is non-trivial on $L$. 
Suppose that $g^2\neq 1$.   Then $\phi(g)$ is not almost cyclic unless one of the following holds:
\begin{itemize}
\item[(1)] $p = 3$, $q = 8$, 
$|g|=9$, and $\dim \phi=7,8$ if $\ell \neq 3$, whereas $\dim \phi=7$ if $\ell = 3$.
\item[(2)] $p=2$, $q = 4$, $|g|=4$ and either $\ell\neq 3,5$ and $\dim \phi=5$, or $\ell=3$ and $\dim
\phi=4$, or $\ell= 5$ and $\dim \phi=2,3,5$.
\item[(3)] $p=2$, $q = 9$ and one of the following holds:
\begin{itemize}
\item[(i)] If $\ell \neq 2,5$, then 
\begin{itemize}
\item[(a)] $G=L.2_1 = \mathrm{P\Sigma L}_2(9)$, $|g|=4$ and $\dim \phi=4,5$;
\item[(b)] $G=L.2_2 = \PGL_2(9)$, $|g|=8$ and $\dim \phi=6,8,9$;
\item[(c)] $G=L.2_3 = M_{10}$, $|g|=8$ and $\dim \phi=6,8,9$.
\end{itemize}
\item[(ii)] If $\ell=2$, then 
\begin{itemize}
\item[(a)] $G=L.2_1 = \mathrm{P\Sigma L}_2(9)$, $|g|=4$ and $\dim \phi=4$;
\item[(b)] $G=L.2_2 =  \PGL_2(9)$, $|g|=8$ and $\dim \phi= 8$;
\item[(c)] $G=L.2_3 = M_{10}$, $|g|=8$ and $\dim \phi=8$.
\end{itemize}
\item[(iii)] If $\ell=5$, then
\begin{itemize}
\item[(a)] $G=L.2_1 = \mathrm{P\Sigma L}_2(9)$, $|g|=4$ and $\dim \phi=4,5$;
\item[(b)] $G=L.2_2 =  \PGL_2(9)$, $|g|=8$ and $\dim \phi=6,8$;
\item[(c)] $G=L.2_3 = M_{10}$, $|g|=8$ and $\dim \phi=8$.
\end{itemize}
\end{itemize}
\end{itemize}
\end{theo}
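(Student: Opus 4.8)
The plan is to combine the dimension bound of Lemma \ref{uu3} with the order bounds of Section \ref{se3}. Suppose $\phi(g)$ is almost cyclic; then Lemma \ref{uu3} gives $\dim\phi \le \alpha(g)(|g|-1)$, while Lemma \ref{md6}(1) gives the lower bound $\dim\phi \ge (q-1)/(2,q-1)$ (with the small exceptions $q=4,9$). Write $q=r^a$. Since $g\notin \PGL_2(q)$ and $\Aut L/\PGL_2(q)\cong \Gal(\F_q/\F_r)$ is cyclic of order $a$, the image of $g$ is a nontrivial $p$-element of this group, so $p\mid a$; in particular $g$ has a nontrivial field part of some order $p^{j}$, $j\ge 1$. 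I would at once split the analysis according to whether $p=r$ or $p\neq r$, since the two regimes are governed by different parts of Section \ref{se3}.

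First I would bound $|g|$. For $g\notin\PGL_2(q)$ the relevant quantity is not $\eta_p(\Aut L)$ itself (which may be attained inside $\PGL_2(q)$) but the maximal order of a $p$-element with nonzero field part. Applying Lemmas \ref{za4} and \ref{za4bis} to $\PSL_2$ (equivalently, to $\SL_2$ and $\GL_2$), this is bounded by $\max_{1\le j}\,p^{j}\,\eta_p(\PGL_2(q^{1/p^{j}}))$, the maximum running over the admissible field-part orders $p^{j}\mid a$. Since $\eta_p(\PGL_2(q_0))\lesssim q_0=q^{1/p^{j}}$, the dominant term is $j=1$, giving $|g|\lesssim p\,q^{1/p}$; when $p=r$ the unipotent contribution makes $\eta_p(\PSL_2(q))$ tiny and the bound is even stronger. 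Next I would control $\alpha(g)$. Although Proposition \ref{GS2}(1) is stated for elements of prime order, one reduces to it as follows: if $g'=g^{|g|/p}$ is the order-$p$ power of $g$ and $(g')^{x_1},\dots,(g')^{x_t}$ are $L$-conjugates generating $\langle L,g'\rangle\supseteq L$ (so $t=\alpha(g')$), then, since $(g')^{x_i}=(g^{x_i})^{|g|/p}$, the elements $g^{x_1},\dots,g^{x_t}$ generate a subgroup containing $L$ and carrying the field part of $g$, hence equal to $G$; thus $\alpha(g)\le \alpha(g')$, and the latter is $\le 3$ (at most $4$, resp.\ $5$, in the listed exceptional small cases) by Proposition \ref{GS2}(1).

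Combining these bounds, the inequality $(q-1)/(2,q-1)\le \alpha(g)(|g|-1)\lesssim \alpha(g)\,p\,q^{1/p}$ has right-hand side of order $q^{1/p}$ and left-hand side of order $q$, so it fails for all but finitely many $q$; carrying out the elementary estimate (separately for each small prime $p$, and keeping in mind $p\mid a$) leaves only a short explicit list of prime powers to examine, namely $q\in\{4,8,9\}$ together with a handful of borderline values such as $q=16,25,27,64,81$, for which the crude inequality does not yet yield a contradiction.

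For the borderline $q$ I would sharpen the argument: either the exact values of $\eta_p$ and $\alpha(g)$ already force $\dim\phi<(q-1)/(2,q-1)$, or, when the bound is attained, an eigenvalue-multiplicity count rules out almost cyclicity, since $\phi(g)$ must consist of a single scalar block together with a cyclic block whose size cannot exceed the number $o(g)$ of available $p$-power roots of unity. The surviving genuine cases $q=4,8,9$, and the precise dependence on $\ell$, on $\dim\phi$ and on the extension type, are then settled by direct inspection of the (modular) character tables via \cite{Atl,MAtl} and the GAP package \cite{GAP}, exactly as in part (II) of the proof of Lemma \ref{news2a}. I expect the main obstacle to be this last, case-heavy phase, and above all the analysis of $q=9$: here one must use the exceptional isomorphism $\PSL_2(9)\cong A_6$, keep track of the three distinct index-$2$ extensions $\mathrm{P\Sigma L}_2(9)$, $\PGL_2(9)$ and $M_{10}$ (and of the order-$8$ elements lying outside $\PGL_2(q)$), and treat the characteristics $\ell=2$ and $\ell=5$ separately.
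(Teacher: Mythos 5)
Your proposal follows the same route as the paper's proof: combine Lemma \ref{uu3} with the lower bound of Lemma \ref{md6}(1) and the order bounds of Section \ref{se3} to whittle the problem down to a short explicit list of $q$, and finish the survivors by inspecting (modular) character tables with \cite{GAP}. The one genuinely different ingredient is your control of $\alpha(g)$: the reduction $\alpha(g)\le\alpha(g^{|g|/p})$ followed by Proposition \ref{GS2}(1) is correct and pleasantly elementary, but for $p=2$ it only yields $\alpha(g)\le 4$ (the prime-order power of $g$ may be a field involution), whereas the paper proves $\alpha(g)=2$ for $2$-elements of order $\ge 4$ when $q>9$ is odd (Lemma \ref{87t}, resting on Gow's theorem \cite{Gow} about regular semisimple elements and Singer cycles). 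That factor of $2$ matters quantitatively: for $p=2$ and $q=q_0^{2}$ odd your inequality $(q_0^2-1)/2\le 4\bigl(4(q_0+1)-1\bigr)$ admits all $q_0\le 32$, so the ``short explicit list'' is every odd square prime power up to $961$ (plus $q=81$), rather than the paper's $q\in\{9,25,49,81,121,169\}$, each of which must then be eliminated by computing $\eta_2$ exactly or by inspection. This is extra bookkeeping, not a flaw of principle.

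Two places, however, where the sketch as written would not close. First, your assertion that when $p=r$ ``the unipotent contribution makes $\eta_p$ tiny and the bound is even stronger'' fails for $q=27$, $p=3$: there are elements $g\notin\PGL_2(27)$ of order $9$ with $g^3$ unipotent, $\alpha(g)\le\alpha(g^3)=2$, and the inequality reads $13\le 2\cdot 8$, so the $13$- and $14$-dimensional representations survive all your numerical bounds. Ruling them out needs either the unipotent-case analysis of \cite{DMZ10} (which is what the paper invokes) or an explicit comparison of the eigenvalue multiplicities of $\phi(g)$ with those of the order-$3$ unipotent element $\phi(g^3)$; your proposed count ``cyclic block of size at most $o(g)$'' is exactly the bound of Lemma \ref{uu3} again and gives nothing new here. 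Second, $q=25$ and $q=49$ are not ``borderline values dismissed by sharpened bounds'': the numerics are genuinely attained for $|g|=8$, resp.\ $16$, and the paper must observe which of the three index-$2$ extensions actually contain such elements (the field extension $\mathrm{P\Sigma L}_2(q)$ does not) and then run \cite{GAP} on the remaining ones. So the final computational phase has to cover $q\in\{4,8,9,25,27,49\}$ (and the $\eta_2$ checks for $81,121,169$), not just $q\in\{4,8,9\}$; with that correction your plan coincides with the paper's proof.
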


\begin{proof}
Let $q=q_0^{p^m}$. By Lemma \ref{md6}(1), $\dim\phi\geq \frac{q-1}{(q-1,2)}=\frac{q_0^{p^m}-1}{(q_0-1,2)}$, unless $q = 4, 9$. 
Note that if  $g$ is semisimple, then $|g| \leq p^m |L|_p =p^m |(q_0^2)^{p^m}-1|_p=p^m |q_0^2-1|_p$ 
by Lemma \ref{f4t}(2).

Suppose that  $p>2$ (and hence $q\neq 4,9$). If $g$ is unipotent, the result follows from \cite[Lemma 4.13]{DMZ10}. 
So, assume that $g$ is semisimple. Then, $|g|\leq p^m(q_0+1)$ and, by Lemma \ref{GS2},  $\alpha(g)=2$.
If $\phi(g)$ is almost cyclic, the inequality $q_0^{p^m}-1\leq 2 (2,q_0-1)(p^m(q_0+1)-1)$ must hold by Lemma \ref{uu3}.
However, this happens only for $(q,p)=(8,3)$, which gives item (1). Indeed (see \cite{Atl,MAtl}),
the group $G=\Aut \PSL_2(8)$
has irreducible representations of degree $7,8$ over the complex numbers, in which the elements of order $9$ are almost 
cyclic. This implies that the elements of order $9$ are almost cyclic in every irreducible constituent of degree $>1$ 
of a 
Brauer reduction of any of these representations modulo any prime.  
If $\ell=3$, we only get constituents of degree $7$, whereas, if $\ell=7$  these are of degree $7$ and $8$. Note
that the case of $\ell=2$ is irrelevant here, but will appear for $G={}^2G_2(3)\cong \SL_2(8)\cdot 3$.

Now, suppose that $p=2$ and consider first the case when $q > 9$ is odd. By the above,
$|g|\leq 2^{m+1} (q_0+1)$.
Suppose that $\phi(g)$ is almost cyclic. Then
$\frac{q-1}{2} \leq \dim \phi \leq \alpha(g) (|g|-1)$ by  Lemma \ref{uu3}. Thus, as $g^2\neq1$, by Lemma \ref{87t}
we must have $\frac{q_0^{2^m}-1}{2} \leq 2(2^{m+1} (q_0+1) -1)$.
This inequality holds only if either $m=1$ and $q_0=3,5,7,9,11,13$,  or $m=2$ and $q_0=3$.
However, $\eta_2(\PSL_2(81))=16$, contradicting Lemma \ref{uu3}.
Similarly, we can rule out $q=121$ and $q =169$, since $\eta_2(\PSL_2(121))\leq 16$ and
$\eta_2(\PSL_2(169))\leq 16$.
So, we are left with the cases $q=25,49$.

Next, suppose that $q>4$ is even.
Then $\dim \phi\geq q-1$ and clearly $ |g|\leq 2^{m+1}$.
If $\phi(g)$ is almost cyclic, then 
$q-1 \leq \dim \phi \leq \alpha(g) (|g|-1)$. By Lemma \ref{GS2}(1), $\alpha(g)\leq 4$. Thus, since $g^2\neq 1$, 
$ q_0^{2^m}-1 \leq 4(2^{m+1}-1 )$, which holds if and only if 
$q_0=2$ and $m=1,2$, i.e. if and only if $q=4, 16$.

In conclusion, we are left with the cases where $q \in \{4, 9, 16, 25, 49
\}.$\\
1) Suppose $q=4$. Then $|g| = 4$, $\alpha(g) = 2$, and using the packages GAP and MAGMA we get item (2) of the
statement.\\
2) Suppose $q=9$. Then $|g| = 4,8$ and $\alpha(g) = 2$. If $\ell \neq 2$, using the packages GAP and MAGMA we get the
results recorded in the statement, item (3).  So, suppose $\ell = 2$. Assume first $|g| = 4$. Then $\phi(g)$ almost
cyclic implies $\dim \phi \leq 6$, which in turn implies $\dim \phi = 4, 6$. If  
$G = \mathrm{P\Sigma L}_2(9)$,  then $G$ has
a $2$-modular representation of degree $4$ in which  $\phi(g)$ is actually cyclic, whereas $3.G$ has a faithful 
$2$-modular
representation of degree $6$ in which $\phi(g)$ is almost cyclic.  There are no other occurrences for $|g| = 4$. Next,
assume that $|g| = 8$. Then $\phi(g)$ almost cyclic implies $\dim \phi \leq 14$. Then either $G = \PGL_2(9)$ or $G = 
M_
{10}$. In the first case there is a $2$-modular representation of $G$ of degree $8$ in which $ \phi (g)$ is almost
cyclic, and a faithful $2$-modular representation of $3.G$ of degree $6$ in which $ \phi (g)$ is almost cyclic. In the
latter case, there is a $2$-modular representation of $G$ of degree $8$ in which  $\phi (g)$ is almost cyclic, and two
faithful $2$-modular representations of $3.G$ of degree $6$ and $9$ respectively, in which $\phi (g)$ is almost cyclic.\\
3) Suppose $q = 16$. Then $|g| = 4,8$ and $\alpha(g) = 2$. Since $\dim \phi \geq 15$, $\phi(g)$
cannot be almost cyclic by Lemma \ref{uu3}. So this case is ruled out.\\
4) Suppose $q=25$. Then $|g| = 4,8$ and $\alpha(g) = 2$. Since $\dim \phi \geq 12$, by Lemma \ref{uu3} we may assume $|g|=8$,
However, $L.2_2=\mathrm{P\Sigma L}_2(25)$ does not contain elements of order $8$.
So, we are left to consider the projective representations of $L.2_1=\PGL_2(25)$ and of $L.2_3$.
Making use of GAP we see that if $\ell \neq 2$, then $\phi(g)$ is not almost cyclic.
On the other hand, if $\ell=2$, then $\dim \phi\geq 24$, and hence $\phi(g)$ is not almost cyclic. So this case is ruled
out.\\
5) Suppose $q=49$. Then  $|g| = 4,8,16$ and $\alpha(g) = 2$. Since $\dim \phi \geq 24$, again by Lemma \ref{uu3} we may assume $|g|=16$.
However, $L.2_2=\mathrm{P\Sigma L}_2(49)$ does not contain elements of order $16$.
So, we are left to consider the projective representations of $L.2_1=\PGL_2(49)$ and of $L.2_3$.
Making use of GAP we see that if $\ell \neq 2,3$ $\phi(g)$ is not almost cyclic.
On the other hand, if $\ell=2$, then $\dim \phi\geq 48$, and hence $\phi(g)$ is not almost cyclic. So, also this last
case is ruled out.
\end{proof}

 \section{The groups $\PSL_n(q)$, $n>2$; $\PSU_n(q)$, $n >2$; $\PSp_{2n}(q)$, $n > 1$, $q$ odd}\label{LUSpodd}

 In this section we consider the simple groups $L=\PSL_n(q)$, $n>2$, $\PSU_n(q)$, $n>2$ with
$(n,q)\neq (3,2)$, $\PSp_{2n}(q)$, $q$ odd.
\smallskip

 Let $1\neq g\in \Aut L$ be a $p$-element, set $G=\lan L,g\ran$ and let 
$\phi$ be a cross-characteristic projective irreducible representation of $G$ that is non-trivial on $L$. Observe that 
if the restriction of $\phi$ to $L$ has a
constituent which is not a Weil representation of $L$, a lower bound of $\dim\phi$ is provided by Lemma \ref{gmst2}. On
the other hand, the occurrence of almost cyclic elements in the Weil representations of the groups $L$ listed above is
thoroughly dealt with in our earlier paper \cite{DMZ12}. There, it is also shown that they extend to the subgroup 
$A_d$ of $ \Aut L$ generated by all  inner and diagonal automorphisms.
Now, suppose that $g\in A_d \setminus L$ and $\phi$ restricted to $L$ decomposes into the sum of, say, $t$ constituents which
are all Weil representations of $L$. By Clifford's theorem these constituents are all $g$-conjugates. However, they
extend to $A_d$. Hence $t=1$; in other words, $\phi$ itself is a Weil representation of $A_d$, and is disposed of in
\cite{DMZ12}.  In view of this, this section will be mainly concerned with answering the following cases:
(A) the case when  the restriction of $\phi$ to $L$  has a constituent which is not a Weil representation of $L$;
(B) the case when $g\notin A_d$ and all the constituents of the restriction of $\phi$ to $L$  are Weil representations 
of $L$.

 \subsection{Case $(\mathrm{A})$}
 
Apart from very few groups,  case (A) is answered by the following.

\begin{lemma}\label{nonW}
Let $L$ be one of the following simple groups:
\begin{itemize}
\item[(1)] $\PSL_n(q)$, where $n\geq 3$ and  $(n,q)\not \in \{ (3,2), (3,4), (4,2)\}$;
\item[(2)] $\PSU_n(q)$, where $n\geq 3$ and  $(n,q)\not \in \{(3,3), (3,4),(4,2),(4,3),(5,2) \}$;
\item[(3)] $\PSp_{2n}(q)$, where $n\geq 2$, $q$ is odd  and  $(n,q)\neq (2,3)$.
\end{itemize}
Let $1 \neq g\in \Aut L$ be a $p$-element for some prime $p$ and set $G=\langle L, g\rangle$. 
Let  $\phi$ be a cross-characteristic projective irreducible representation of $G$ that
is non-trivial on $L$. 
If the restriction of $\phi$ to $L$ has a constituent which is not a Weil representation of $L$, 
then $\phi(g)$ is not almost cyclic.
\end{lemma}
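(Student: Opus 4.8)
The plan is to confront the upper bound on $\dim\phi$ forced by almost cyclicity with the lower bound on $\dim\phi$ valid for non-Weil representations. Assume, for a contradiction, that $\phi(g)$ is almost cyclic. Then Lemma \ref{uu3} gives
\begin{equation*}
\dim\phi \leq \al(g)(o(g)-1) \leq \al(g)(|g|-1).
\end{equation*}
On the other hand, by hypothesis $\phi_{|L}$ has a constituent $\psi$ which is not a Weil representation of $L$, so $\dim\phi \geq \dim\psi$, and Lemma \ref{gmst2} bounds $\dim\psi$ from below by the relevant quantity $\vartheta^+_n(q)$, $\vartheta^-_n(q)$ or $\sigma_n(q)$ (the genuinely small pairs $(n,q)$ excluded there being exactly those excluded in the present statement). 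It thus suffices to show, for every admissible $L$, that the resulting inequality $\vartheta(q) \leq \al(g)(o(g)-1)$ is impossible, where $\vartheta(q)$ denotes the appropriate bound of Lemma \ref{gmst2}. Since $\vartheta(q)$ grows polynomially in $q$ of degree about $2n-4$ for $\PSL_n$ and $\PSU_n$ and about $2n-1$ for $\PSp_{2n}$, while $\al(g)$ is bounded linearly in $n$ by Propositions \ref{GS1} and \ref{GS2}, the whole matter reduces to bounding $o(g)$ accurately.

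Next I would split according to whether $p$ equals the defining characteristic $r$ of $L$. If $p=r$, then $g$ is a product of a unipotent element with a field automorphism of $r$-power order, and $o(g)$ is controlled by $\eta_r(\Aut L)$; by Lemmas \ref{ge7} and \ref{za4}(2) this quantity grows only like $n$ times the $r$-part of the field degree, hence is negligible against $\vartheta(q)$, and the inequality fails at once for every $L$ in the list. If instead $p\neq r$, then $g$ is semisimple modulo its field-automorphism part; crucially, by Lemma \ref{za4}(3),(4) (and by Lemma \ref{za4bis} when $p=2$), adjoining field automorphisms does not increase the $p$-part of element orders, so that $o(g)\leq \eta_p(A_d)$, where $A_d$ is the group of inner-diagonal automorphisms. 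I may therefore take $g$ to be a semisimple element of $A_d\cong \PGL_n(q)$, $\PGU_n(q)$ or $\mathrm{PCSp}_{2n}(q)$, and estimate $o(g)$ through Lemmas \ref{ge7} and \ref{nn9}.

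The decisive and most delicate case is this semisimple one, and it is where I expect the real obstacle to lie. Here the order $o(g)$ of a regular semisimple element can be as large as roughly $q^{n-1}$ (the order of a Singer cycle modulo scalars), so the crude bound $o(g)\leq |g|-1$ is not by itself conclusive against $\vartheta(q)$ when both $n$ and $q$ are small; indeed the exclusion of $\PSL_3(2)$, $\PSL_3(4)$ and the other small groups in the hypotheses reflects exactly that the gap between a $q^{2n-4}$-type lower bound and an $n\cdot q^{n-1}$-type upper bound is narrowest there. To close the gap I would exploit two refinements: first, that a regular semisimple element satisfies $\al(g)\leq 3$ by Lemma \ref{g12}(2) (and $\al(g)$ is often smaller still); second, that the exact value $\eta_p(\GL_n(q))=p^l|q^e-1|_p$ from Lemma \ref{ge7}, together with $p^l e\leq n$, forces $o(g)$ well below the Singer bound unless $e_p(q)=n$. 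The polynomial inequality $\vartheta(q) > \al(g)(o(g)-1)$ then holds for all but finitely many $(n,q)$, and the finitely many borderline pairs are reduced, via the sharper estimates above, either to the small groups already excluded in the statement or to a short direct check using the precise degrees from \cite{Atl,MAtl} and the GAP package, confirming in each case that no almost cyclic $\phi(g)$ can occur.
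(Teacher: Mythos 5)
Your overall strategy is exactly the paper's: assume $\phi(g)$ almost cyclic, apply Lemma \ref{uu3} to get $\dim\phi\leq\al(g)(|g|-1)$, play this against the non-Weil lower bounds of Lemma \ref{gmst2} and the $\al(g)$ bounds of Propositions \ref{GS1} and \ref{GS2}, and finish the finitely many borderline pairs $(n,q)$ by direct inspection. The one place you diverge is the bound on $|g|$. The paper does not decompose $g$ at all: it simply invokes \cite[Theorem 2.16]{GMPS}, which gives a single uniform bound $|g|\leq\mu(L)$ (of Singer-cycle size, roughly $q^{n-1}$ resp. $q^{n+1}/(q-1)$) valid for \emph{every} element of $\Aut L$, and this crude bound already kills all but a short explicit list of groups (Table \ref{megaTable} plus a few others), each of which is then dispatched with precise values of $|g|$ and $\al(g)$ from GAP. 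Your route via the $\eta_p$ machinery of Section \ref{se3} would also work and is in places sharper, but it is costlier: you must split $p=r$ versus $p\neq r$, and your reduction $o(g)\leq\eta_p(A_d)$ is not fully covered by the lemmas you cite --- Lemma \ref{za4} is stated only for odd $p$, and Lemma \ref{za4bis} only for the linear groups, so the unitary and symplectic cases with $p=2$ (and the graph automorphism of $\PSL_n(q)$) would need separate justification. Likewise, your appeal to Lemma \ref{g12}(2) to get $\al(g)\leq 3$ applies only to \emph{regular} semisimple elements of $L$ itself, not to arbitrary semisimple $p$-elements of $\Aut L$, so it cannot be used as a blanket refinement in the borderline cases; the paper instead reads off $\al(g)$ case by case. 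None of this is a fatal flaw, but the paper's choice of the uniform GMPS bound is what lets the argument stay short and avoid these side conditions.
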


\begin{proof}
Suppose that $\phi(g)$ is almost cyclic. By Lemma \ref{uu3} the following bound must be met:
\begin{equation}\label{bou}
\dim \phi \leq \alpha(g)\cdot (|g|-1) 
\end{equation}
Since the restriction of $\phi$ to $L$ has a constituent which is not a Weil representation of $L$,
lower bounds for $\dim \phi$ are provided by Lemma \ref{gmst2}.
Upper bounds for $\alpha(g)$ are given in Propositions \ref{GS1} and \ref{GS2}.
Finally, by \cite[Theorem 2.16]{GMPS}  we have $|g|\leq \mu(L)$, where
$$\mu(L)=\left\{\begin{array}{cl}
\frac{q^n-1}{q-1} & \textrm{ if } L=\PSL_n(q),\\
q^{n-1}-1 &  \textrm{ if } L=\PSU_n(q) \textrm{ with } n \textrm{ odd and } q \textrm{ not prime},\\
q^{n-1}+q &  \textrm{ if } L=\PSU_n(q) \textrm{ with } n \textrm{ odd and } q \textrm{ prime},\\
q^{n-1}+1 &  \textrm{ if } L=\PSU_n(q) \textrm{ with } n \textrm{ even and } q>2,\\
4(2^{n-3}+1) &  \textrm{ if } L=\PSU_n(q) \textrm{ with } n \textrm{ even and } q=2,\\
\frac{q^{n+1}}{q-1} &  \textrm{ if } L=\PSp_{2n}(q).
                 \end{array}\right.$$
                 
Suppose first that $L=\PSL_n(q)$. If $n=3$ and $q\geq 5$, then $\alpha(g) \leq 4$.
The bound $\vartheta_3^+(q)\leq 4(q^2+q)$ only holds for $q=5,7,13$.
Next, assume that $n=4$ and $q\geq 4$. In this case, $\alpha(g)\leq 6$: the bound $\vartheta_4^+(q)\leq 6 
(q^3+q^2+q)$ only holds 
for  $q=4,5,7,9,11,13$.
Now, assume that $n\geq 5$ and $(n,q)\neq (6,2),(6,3)$. In this case, the inequality  $\vartheta_n^+(q)\leq n 
\frac{q^n-q}{q-1}$ only holds for $(n,q) = (5,2)$.

If $L=\PSL_3(3)$, then
$|g|\in \{2, 3, 4, 8, 13 \}$ and $\dim \phi\geq 16$.
If $|g|\leq 3$, then $\alpha(g)\leq 3$, whereas if $|g|\geq 4$, then $\alpha(g)=2$.
Thus we readily get a contradiction for $|g| \neq  13$.  Likewise, for $|g|=13$, using the GAP package we see that 
$\phi(g)$ is not almost cyclic.

If $L\in\{\PSL_4(3), \PSL_5(2), \PSL_6(2), \PSL_6(3) \} $ we use the following data to get a contradiction with the bound 
\eqref{bou}:
\begin{itemize}
 \item[$(i)$] $L=\PSL_4(3)$: $|g| \in \{ 2, 3, 4, 5, 8, 9, 13 \}$ and $\dim \phi\geq 26$.
If $|g|\geq 5$, then $\alpha(g)=2$; if $|g|\leq 4$ then $\alpha(g)\leq 6$.
\item[$(ii)$] $L=\PSL_5(2)$: $|g| \in \{ 2, 3, 4, 5, 7, 8, 16, 31 \}$ and $\dim \phi\geq 75$.
If $|g|\geq 5$, then $\alpha(g)=2$; if $|g|\leq 4$ then $\alpha(g)\leq 5$.
\item[$(iii)$] $L=\PSL_6(2)$:  $|g| \in \{ 2, 3, 4, 5, 7, 8, 9, 16, 31 \}$ and $\dim \phi\geq 61$.
If $|g|\geq 5$, then $\alpha(g)=2$, whereas if $|g|\leq 4$, then $\alpha(g)\leq 6$. 
\item[$(iv)$] $L=\PSL_6(3)$: $|g| \in \{ 2, 3, 4, 5, 7, 8, 9, 11, 13, 16, 121    \}$ and $\dim \phi\geq 
362$. If $|g|\geq 9$ then  $\alpha(g)=2$; if $|g|\leq 8$  then  $\alpha(g)\leq 6$.
\end{itemize}

Next, suppose that $L=\PSU_n(q)$. If $n=3$ and $q\geq 5$, the bound $\vartheta_3^-(q)\leq 3(\mu(\PSU_3(q))-1)$ 
only holds for $q=5,8,11,17$.
If $n=4$ and $q\geq 4$, the bound  $\vartheta_4^-(q)\leq 6 (\mu(\PSU_4(q))-1)$ only allows the following possibilities:
$q=4,5,7,9,11$.
If $n\geq 5$ and $(n,q)\neq (6,2)$, the bound $\vartheta_n^-(q)\leq n (\mu(\PSU_n(q))-1)$ only holds 
when $(n,q)\in \{(5,2),(5,3),(7,2)\}$.

If $L=\PSU_6(2)$, then $|g| \in \{ 2, 3, 4, 5, 7, 8, 9, 11, 16  \}$ and $\dim \phi\geq 21$.
If $|g|\geq 5$, then $\alpha(g)=2$, while if $|g|\leq 4$ then $\alpha(g)\leq 6$.
Thus, if $|g|\neq 16$, by Lemma \ref{uu3} $\phi(g)$ is not almost cyclic.
If $|g|=16$ ($g \not \in L$), we need to examine the representations $\phi$ such that $21 \leq \dim \phi\leq 30$.
Using the GAP package, we find that $\phi(g)$ is never almost cyclic.

Finally, suppose that $L=\PSp_{2n}(q)$. 
If $n=2$ and $q\geq 5$, then the bound $\sigma_2(q)\leq 5 \frac{q^{3}-q+1}{q-1}$ only holds for
$q=5,7,9,11$. If $n\geq 3$, the bound $\sigma_n(q)\leq 2n\frac{q^{n+1}-q+1}{q-1}$ only holds for
$(n,q)\in \{(3,3),(4,3)\}$.

If $L=\PSp_4(5)$, then $|g| \in \{ 2, 3, 4, 5, 8, 13\}$ and $\dim \phi\geq 40$.
If $|g|=8,13 $, then $\alpha(g)=2$, whereas if $|g|\leq 5$, then $\alpha(g)\leq 5$. In both cases Lemma \ref{uu3} 
yields a contradiction.

In view of the above, we are left to examine the groups $L$ listed in Table \ref{megaTable} below. 
But again, in each case, in view of the data shown in table, we conclude by Lemma  \ref{uu3} that $\phi(g)$ cannot be 
almost cyclic.
\end{proof}

\begin{table}[ht]
$$\begin{array}{c|c|c|c||c|c|c|c}
 L & |g|\leq & \alpha(g)\leq & \dim \phi\geq  &  L & |g|\leq & \alpha(g)\leq & \dim \phi\geq  \\\hline
\PSL_3(5) & 31 & 3  &  96 &
\PSL_3(7) & 19 & 4  & 96 \\
\PSL_3(13) & 61 & 4 & 672 & 
\PSL_4(4) & 17 &  6  & 189 \\ 
\PSL_4(5) & 31   & 6 & 248 & 
\PSL_4(7) & 25 & 6 & 1026 \\
\PSL_4(9) & 41 & 6 & 2912   & 
\PSL_4(11) & 61 & 6 & 6650 \\
\PSL_4(13) & 61 & 6 & 13176 &
\PSU_3(5)& 8 & 3 & 28\\
\PSU_3(8) & 19 & 3 & 105 & 
\PSU_3(11) & 37 & 3 & 260\\
\PSU_3(17) & 32  & 3 &  912 &
\PSU_4(4) & 17 & 6 & 220  \\
\PSU_4(5) & 13 & 6 & 272 & 
\PSU_4(7) & 43 & 6 & 1074 \\
\PSU_4(9) & 64 & 6 & 2992 & 
\PSU_4(11) & 64 &  6 & 6770 \\
\PSU_5(3) & 61 & 5  & 324  & 
\PSU_7(2) & 43 & 7  & 320 \\
\PSp_4(7) & 25 & 5 &  126 & 
\PSp_4(9) & 41 & 5 & 288 \\
\PSp_4(11) & 61 & 5 & 550 &
\PSp_6(3) & 13  & 6 & 78 \\
\PSp_8(3) & 41  & 8  & 780
  \end{array}$$
 \caption{Bounds for $|g|$, $\alpha(g)$ and $\dim \phi$ for some simple groups.}\label{megaTable}
\end{table}

\subsection{Case $(\mathrm{B})$}
 
We now consider case (B): $g\notin A_d$ and all the constituents of the restriction of $\phi$ to $L$  are Weil 
representations  of $L$. We start with recording some arithmetical inequalities to be used when applying Lemma \ref{uu3}.

\begin{lemma}\label{no2}
Let $r$ be a prime, $q= r^a$.
\begin{itemize}
\item[(1)] Let $n\geq 5$. Then $\frac{q^n-1}{q-1}-2> n(q-1)$. Furthermore, $\frac{q^3-1}{q-1} -2 > 4(q-1)$ and 
$\frac{q^4-1}{q-1} -2 > 7(q-1)$.
\item[(2)] Let $n\geq 3$ and $q$ odd. Then $\frac{q^n-1}{2}> 2n(q-1)$. Furthermore, $\frac{q^2-1}{2}> 6(q-1)$ unless 
$3\leq q \leq 11$.
\item[(3)] Let $n\geq 5$ be odd. Then $\frac{q^n-q}{q+1}> n(q-1)$. Furthermore, $\frac{q^3-q}{q+1}> 4(q-1)$, unless 
$q=2,3,4$.
\item[(4)] Let $n\geq 6$ be even. Then $\frac{q^n-1}{q+1}> n(q-1)$. Furthermore, $\frac{q^4-1}{q+1}> 6(q-1)$, unless 
$q=2$.
\end{itemize}
\end{lemma}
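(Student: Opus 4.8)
The plan is to treat all four items by the same elementary scheme: each asserts that a quantity growing geometrically in $n$ eventually dominates one growing only linearly in $n$, so after clearing denominators I would reduce every inequality to a polynomial inequality in $q$ and then separate a base case in $n$ from an induction step. Throughout I keep in mind that $q\geq 2$ is an integer and, in item (2), that $q$ is odd.

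For the main (general-$n$) inequalities I would argue by induction on $n$ via a telescoping comparison. For item (1), writing $\frac{q^n-1}{q-1}=1+q+\cdots+q^{n-1}$, passing from $n$ to $n+1$ adds $q^n$ to the left side while the right side $n(q-1)$ increases only by $(q-1)<q^n$, so it suffices to verify the base case $n=5$, which reduces to $q^4+q^3+q^2-4q+4>0$, clearly true for $q\geq 2$. Item (2) is identical in spirit: the step from $n$ to $n+1$ adds $\frac12 q^n(q-1)$ to the left, which exceeds the right-side increment $2(q-1)$ as soon as $q^n>4$; the base $n=3$ reduces, after factoring out $(q-1)$, to $q^2+q+1>12$, true for odd $q\geq 3$. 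For the parity-restricted items (3) and (4) I would induct in steps of $2$ (equivalently in steps of $1$, since the increment argument never uses the parity): clearing denominators turns (3) into $q^n-q>n(q^2-1)$ and (4) into $q^n-1>n(q^2-1)$, and in each case passing from $n$ to $n+2$ adds $q^n(q^2-1)$ to the left against a right-side increment of $2(q^2-1)$, so the base cases $n=5$ (giving $q^5-5q^2-q+5>0$) and $n=6$ (giving $q^6-1>6(q^2-1)$) finish the induction.

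The remaining ``furthermore'' assertions are stronger standalone inequalities at small $n$ (with larger leading coefficients coming from the relevant $\alpha$-bounds), and here the point is simply to pin down the exact exceptional thresholds by factoring. For (1) the two inequalities at $n=3,4$ reduce to $q^2-3q+3>0$ (negative discriminant, hence always true) and to $q^3+q^2-6q+6>0$ (positive and increasing for $q\geq 2$). For (3) the identity $\frac{q^3-q}{q+1}=q(q-1)$ turns the claim into $q>4$, giving exactly the exceptions $q=2,3,4$; for (4) the identity $\frac{q^4-1}{q+1}=(q-1)(q^2+1)$ turns it into $q^2>5$, excluding only $q=2$. For (2) the factorisation $\frac{q^2-1}{2}>6(q-1)$ becomes $q+1>12$, i.e.\ $q>11$, which (recalling $q$ odd, $q\geq 3$) yields precisely the stated exception $3\leq q\leq 11$.

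There is essentially no deep obstacle here; the only care required is bookkeeping. The one point to get right is that the ``furthermore'' clauses are \emph{not} the $n=3,4$ instances of the main formulae but carry their own coefficients, so each must be reduced separately; and above all the exceptional $q$-ranges must be recorded exactly, since those thresholds feed directly into the dimension estimates of the following sections. I would therefore write out each factorisation explicitly rather than appeal to growth alone, so that the boundary values $q=4$ in (3), $q=2$ in (4), and $q=11$ in (2) are unambiguous.
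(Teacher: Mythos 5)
Your proof is correct and matches the paper's intent exactly: the paper disposes of this lemma with the single line ``Elementary straightforward computations,'' and your telescoping inductions together with the explicit factorisations (in particular $\frac{q^3-q}{q+1}=q(q-1)$, $\frac{q^4-1}{q+1}=(q-1)(q^2+1)$, and $\frac{q^2-1}{2}=\frac{(q-1)(q+1)}{2}$, which pin down the exceptional ranges $q\le 4$, $q=2$, and $3\le q\le 11$) are precisely the computations being alluded to.
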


\begin{proof}
Elementary straightforward computations. 
\end{proof}

\begin{lemma}\label{3hh}
Let $L$ be one of the simple groups $\PSL_n(q)$, $n>2$; $\PSU_n(q)$, $n >2$; $\PSp_{2n}(q)$, $n > 1$, $q$ odd. 
Let $G=\lan g,L\ran \subseteq \Aut L$, where $g$ is a $p$-element. 
Let $\phi$ be a cross-characteristic irreducible projective  $\F$-representation of $G$ non-trivial on 
$L$, and suppose 
that $p$ is coprime to $|L|$. Then $\phi(g)$ is not almost cyclic.
\end{lemma}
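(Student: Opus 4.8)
The plan is to exploit the hypothesis $p\nmid|L|$ to force $g$ to be, modulo inner and diagonal automorphisms, a field automorphism of very small $p$-power order, so that the upper bound for $\dim\phi$ coming from Lemma \ref{uu3} becomes incompatible with the lower bounds of Lemma \ref{md6}.

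First I would determine the nature of $g$. Since $\phi$ is non-trivial on $L$ we may take $g\neq 1$, and as $2\mid|L|$ the prime $p$ is odd. Recall from Section \ref{se3} that the group $A_d$ generated by the inner and diagonal automorphisms is $\PGL_n(q)$, $\PGU_n(q)$ or $\mathrm{PCSp}_{2n}(q)$, so that $|A_d:L|$ equals $(n,q-1)$, $(n,q+1)$ or $2$. In each case every prime divisor of $|A_d:L|$ divides $q\mp 1$ (or equals $2$), and a short computation with the cyclotomic factors of $|L|$ shows that, for $n\geq 3$, such a prime divides $|L|$; therefore $p\nmid|A_d|$. It follows that $\lan g\ran\cap A_d=1$, so the image $\bar g$ of $g$ in $\Aut L/A_d$ is a non-trivial $p$-element of the same order as $g$. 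As $p$ is odd and the graph contribution to $\Aut L/A_d$ is $2$-torsion, $\bar g$ lies in the field-automorphism part $\Phi=\Gal(\F_q/\F_r)$ (respectively $\Phi_2$ for $\PSU_n$), where $q=r^a$ and $|\Phi|=a$; thus $g$ is, up to $A_d$, a field automorphism. Consequently $|g|$ is a power of $p$ dividing $|a|_p$, and $g$ centralises a subfield subgroup defined over $\F_{q_0}$ with $q=q_0^{|g|}$ and $q_0\geq r\geq 2$. This gives the crucial estimate $|g|\leq\log_2 q$.

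Next I would apply Lemma \ref{uu3}: if $\phi(g)$ were almost cyclic, then $\dim\phi\leq\al(g)(|g|-1)\leq\al(g)(\log_2 q-1)$. Here $\al(g)$ is bounded by Propositions \ref{GS1} and \ref{GS2}: if the natural module has dimension $N\geq 5$, Proposition \ref{GS1}(1) gives $\al(g)\leq N+1$ with no hypothesis on $|g|$, while for the remaining groups $\PSL_3,\PSL_4,\PSU_3,\PSU_4,\PSp_4$ Proposition \ref{GS2} gives $\al(g)\leq 6$. Hence in all cases $\dim\phi\leq(N+1)(\log_2 q-1)$, which grows only linearly in $N$ and logarithmically in $q$. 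On the other hand the lower bounds of Lemma \ref{md6} are, in every family, at least of the order $q^{n-1}$ (for instance $\dim\phi\geq\frac{q^n-1}{q-1}-2>q^{n-1}-2$ for $\PSL_n$, and $\dim\phi\geq\frac{q^n-1}{2}$ for $\PSp_{2n}$). Since $p\mid a$ with $p$ odd, and moreover $p\geq 5$ (because both $2$ and $3$ divide $|L|$, as $3$ divides $q$ or $q^2-1$), we have $q=r^{a}\geq 2^5=32$; a direct estimate then shows that the lower bound always exceeds $(N+1)(\log_2 q-1)$, the comparison being tightest for $\PSL_3$ and $\PSp_4$ yet holding with a wide margin. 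This contradiction proves that $\phi(g)$ is not almost cyclic.

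The one point needing extra care, and the only plausible obstacle, is the bound on $\al(g)$ for the low-dimensional groups when $g$ does not have prime order, a situation not directly covered by Proposition \ref{GS2}. However, $|g|$ non-prime forces $p^2\mid a$ and hence $q\geq 2^{p^2}$, an astronomically large field for which the lower bound of Lemma \ref{md6} dwarfs $\al(g)(|g|-1)$ for any polynomial bound on $\al(g)$; so this case is harmless, and the genuine content of the proof lies entirely in the uniform comparison above.
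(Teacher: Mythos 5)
Your proposal is correct and follows essentially the same route as the paper: reduce to $g$ being a field automorphism modulo $A_d$ (so that $|g|$ divides $|\Phi|$ and is tiny compared with $q$), then play the upper bound of Lemma \ref{uu3} against the lower bounds of Lemma \ref{md6} — the paper packages the final numerical comparison in Lemma \ref{no2}, using the cruder estimate $|g|<q-1$ where you use $|g|\le\log_2 q$. The one loose end, a bound on $\al(g)$ for non-prime-order $g$ in the low-rank cases (which you flag but do not actually supply, and which the paper also glosses over), is closed by the observation that $\al(g)\le\al(g^{|g|/p})$: if $L$-conjugates $h^{t_1},\ldots,h^{t_m}$ of $h=g^{|g|/p}$ generate a group containing $L$, then $g^{t_1},\ldots,g^{t_m}$ generate a group containing $L$ and hence containing $g$, i.e.\ all of $G$.
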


\begin{proof}
Observe that if $p$ is coprime to $|L|$, then $p>3$, and hence $ |g|$ divides $|\Phi|$. This implies   $3<|g|<q- 1$. 
Now, by Lemma \ref{md6}, the minimum degree of a non-trivial irreducible projective representation of $L$ is 
not less than the number at the left hand side of the items (1) to (4) of Lemma \ref{no2}, unless 
$(n,q)\in\{(3,2),(3,4),(4,2),(4,3)\}$ in case (1), and $(n,q)\in\{(4,2),(4,3)\}$ in case (4). However, all these 
exceptions occur for $q=r$, that is $|\Phi|=1$, or $q=4$, and therefore are irrelevant here.
Similarly, all the exceptions listed in  cases (2), (3) and (4)  of  Lemma \ref{no2} are irrelevant here.
Furthermore, by Lemmas \ref{GS1} and \ref{GS2}, $\al(g)\leq n$ if $L$ is not symplectic, and $\al(g)\leq 2n$ in the 
symplectic case, with some exceptions described there. Again, as $2<|g|
<q- 1$, all those exceptions are ruled out. By Lemma \ref{no2} and Lemma \ref{uu3}, it follows that $\phi(g)$ is not 
almost cyclic. 
\end{proof}

We now suppose that $p$ is a prime dividing $|L|$ and distinguish two cases depending on the divisibility of $q$ by $p$.

 \subsubsection{Case $(p,q)=1$}

\begin{lemma}\label{p67b}
Let $L=\PSL_n(q)$, with $n>2$, and  let $p>2$ be such that $p$ divides $|L|$ and $(p,q)=1$.
Let $1 \neq g\in \Aut L$ be a $p$-element and let $G=\langle g,L\rangle$.
Suppose that 
$g\notin A_d$. Then $n|g|< \frac{q^n-1}{q-1}-2$, and $g$ is not almost cyclic in any cross-characteristic projective 
irreducible representation of $G$ that is non-trivial on $L$.
\end{lemma}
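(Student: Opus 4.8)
The plan is to combine the structural results of Section~\ref{se3} on the exponent of a Sylow $p$-subgroup with the dimension bound of Lemma~\ref{md6}(2), a generation bound $\alpha(g)\le n$, and Lemma~\ref{uu3}. First I would pin down the shape of $g$. Since $A_d$ is normal in $\Aut L$ with $\Aut L/A_d\cong C_2\times\Phi$ and $p$ is odd, the image of the $p$-element $g$ in $\Aut L/A_d$ is a nontrivial $p$-element lying in $\Phi$; that is, modulo $A_d$ the element $g$ is a field automorphism of order $p^j$ with $j\ge 1$. Hence $p^j\mid\alpha$, where $q=r^\alpha$, so $\alpha\ge p^j\ge p$ and therefore $q=r^\alpha\ge r^p\ge 2^p$. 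In particular $q\ge 8$ and $p\le\log_2 q$, which will be the decisive smallness constraint on $p$. Because $g$ has odd order and is a field automorphism modulo $A_d$, none of the exceptional cases in Propositions~\ref{GS1} and~\ref{GS2} (which all concern involutions, graph, or graph-field automorphisms) applies, so $\alpha(g)\le n$ for every $n\ge 3$.

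Next I would bound $|g|$. The group $\langle A_d,g\rangle$ has the form $A_d\cdot\Psi$ with $\Psi$ a cyclic group of field automorphisms of $p$-power order; since $p\nmid q$, Lemma~\ref{za4}(3)--(4) shows that adjoining these field automorphisms does not raise the $p$-exponent, so $|g|\le\eta_p(\PGL_n(q))\le\eta_p(\GL_n(q))$. By Lemma~\ref{ge7}, $\eta_p(\GL_n(q))=p^l\,|q^e-1|_p$ with $e=e_p(q)$ and $p^l e\le n$. The target inequality $n|g|<\frac{q^n-1}{q-1}-2$ then reduces to showing $n\,p^l\,|q^e-1|_p<\frac{q^n-1}{q-1}-2$, which I would verify by cases on $e$. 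For $e=1$ one has $|q-1|_p\le q-1$ and $p^l\le n$, giving $n\cdot\eta_p\le n^2(q-1)$, and the inequality $n^2(q-1)<\frac{q^n-1}{q-1}-2$ is an elementary estimate for $q\ge 8$, $n\ge 3$. For $2\le e\le n-1$ I would use that $p\nmid q-1$ forces $|q^e-1|_p\le\frac{q^e-1}{q-1}$, together with $p^l\le n/e$, reducing to another elementary polynomial comparison (using $e\le n-1$ and $q\ge 8$).

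The genuinely delicate case is $e=n$. Here $p^l e\le n$ forces $l=0$, so $\eta_p(\GL_n(q))=|q^n-1|_p$, and the crude bound $|q^n-1|_p\le\frac{q^n-1}{q-1}$ is too weak, differing from the desired inequality by the factor $n$. To overcome this I would exploit that $p$ is a primitive prime divisor of $q^n-1$ (since $e_p(q)=n$) and invoke Zsygmondy's theorem: for $n\ge 3$ and $q\ge 8$ the number $q^n-1$ has a primitive prime divisor $\ell$, which satisfies $\ell\equiv 1\pmod n$, hence $\ell>n$ and $\ell\mid\frac{q^n-1}{q-1}$. If $\ell\ne p$, then the $p$-free part of $\frac{q^n-1}{q-1}$ exceeds $n$, so $|q^n-1|_p<\frac1n\bigl(\tfrac{q^n-1}{q-1}-2\bigr)$ and the bound follows. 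The only remaining possibility, that $p$ is the \emph{sole} large (primitive) prime factor and the $p$-free part of $\frac{q^n-1}{q-1}$ is at most $n$, forces $\frac{q^n-1}{q-1}$ to be extremely close to a prime power; the resulting instances of the Nagell--Ljunggren equation $\frac{q^n-1}{q-1}=p^c$ have only the classical tiny solutions, all with $q\le 18$ and in particular $q<2^p$, contradicting $q\ge r^p\ge 2^p$. This is the main obstacle, and it is exactly where the field-automorphism constraint $q\ge 2^p$ does the essential work.

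Finally, the non-almost-cyclicity statement follows formally. If $\phi(g)$ were almost cyclic, Lemma~\ref{uu3} would give $\dim\phi\le\alpha(g)(|g|-1)\le n(|g|-1)<n|g|<\frac{q^n-1}{q-1}-2$; but $\dim\phi\ge\frac{q^n-1}{q-1}-2$ by Lemma~\ref{md6}(2), whose exceptional pairs $(n,q)$ all have $q\le 4$ and are excluded by $q\ge 8$. This contradiction completes the proof.
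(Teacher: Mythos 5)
Your setup matches the paper's: $g\notin A_d$ with $p$ odd forces $|\Phi|_p>1$, hence $q\ge r^p\ge 8$; $\alpha(g)\le n$; $|g|\le\eta_p(\GL_n(q))=p^l|q^e-1|_p$ via Lemmas \ref{za4} and \ref{ge7}; and the final contradiction against Lemma \ref{md6}(2) via Lemma \ref{uu3}. The case $e=1$ goes through. But your two remaining cases have genuine gaps. For $2\le e\le n-1$, the inequality you reduce to, namely $\frac{n^2}{e}\cdot\frac{q^e-1}{q-1}<\frac{q^n-1}{q-1}-2$, is false under only the hypotheses you invoke ($e\le n-1$ and $q\ge 8$): for $n=8$, $e=7$, $q=8$ the left-hand side is roughly $2.74\times 10^6$ while the right-hand side is roughly $2.40\times 10^6$. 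Such triples are ultimately excluded only because $e\mid p-1$ and $p\mid\alpha$ force $e<p\le\log_r q$; you state this constraint in your preamble but never feed it into this case.

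The case $e=n$ is where the argument really breaks. The residual configuration you try to kill with Nagell--Ljunggren --- $p$ the sole primitive prime divisor and the $p$-free part of $\frac{q^n-1}{q-1}$ at most $n$ --- is not an instance of $\frac{q^n-1}{q-1}=p^c$ with $c\ge 2$: it includes $c=1$, e.g.\ $\frac{8^3-1}{7}=73$ is prime, so its $73$-free part is $1$, yet here $n\,|q^n-1|_p=3\cdot 73=219>71=\frac{q^n-1}{q-1}-2$, so the target inequality genuinely fails for such $(q,n,p)$ and no Diophantine finiteness statement will rescue it. (Moreover the general Nagell--Ljunggren problem is open, so ``only the classical tiny solutions'' cannot be cited.) What actually excludes these configurations is, once again, $p\mid\alpha$: a primitive prime divisor of $q^n-1$ satisfies $p\equiv 1\pmod{n}$, while $p\mid\alpha$ forces $p\le\log_r q$, and --- this is the paper's key move, which your proof never uses --- Lemma \ref{f4t}(2) then gives $|q^e-1|_p=p^m|q_1-1|_p$ with $q_1=q^{e/p^m}$, far smaller than the crude bound $\frac{q^e-1}{q-1}$ precisely because $e<p\le p^m$. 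The paper splits instead on $n>2e$ versus $n\le 2e$ and settles the hard case with exactly this refinement; you need it (or an equivalent deployment of $p\mid\alpha$) to close both of your cases with $e\ge 2$.
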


\begin{proof}
Let $H = \GL_n(q)$. As $p>2$, $g\in (H\cdot \Phi)/Z(H)\subseteq \Aut L$. By Lemmas \ref{ge7} and \ref{za4}, $|g|\leq 
\eta_p(H)=p^t |q^e-1|_p\leq \frac{n}{e}(q^e-1)$, where $p^t e\leq n < p^{t+1} e$.
Observe that, since $p$ is odd, the assumption $g \notin A_d$ implies that $|\Phi|_p =p^m >1$. This in turn implies  
$q\geq 8$, and hence $\frac{q^n-1}{q-1}-2$ is the minimum degree of a non-trivial cross-characteristic projective 
irreducible representation of $L$ (see Lemma \ref{md6}).
Notice that if $e< \frac{n}{2}$, then $\frac{n^2}{e}  (q^e-1)< \frac{q^n-1}{q-1}-2$. Therefore the statement is proven 
for $n> 2e$.

Next, suppose that $n\leq 2e$, and observe that the assumption $n>2$ implies $e>1$.
Since $p^te \leq n\leq 2e$ and $p>2$, it follows that $p^t=1$.
Let $q_1$ be such that $q_1^{p^m}=q^e$. By Lemma \ref{f4t}, we get
$|q^e-1|_p= p^m |q_1-1|_p$, whence $|g|\leq \eta_p(H)\leq p^m(q_1-1)$.
We need to show that $n p^m (q_1-1) < \frac{q^n-1}{q-1}-2$, and for this it is enough to show that $n p^m (q_1-1) <  
q^{n-1}+q^{n-2}-2$.
Since $e\leq n$ and $e < p$ (note that $p$ is a Zsigmondy prime for the pair $(q,e)$),
it suffices to show that $2 p^{m+1} (q_1-1) < q^{e-1}+q^{e-2}-2=q_1^{p^m-1}+q_1^{p^m-2}-2$.
This is true  except when $m=1$ and
either $p=3$ or
$(p,q_1)\in \{(5,2),(7,2)\}$.
If $p=5,7$ and $q_1=2$, then $q^e=2^5,2^7$, whence $q=2$ and $e=p$, a contradiction, as $e<p$.
If $p=3$, then $2\leq e <p $ yields $e=2$. Thus, we are left to consider the cases $q^2=q_1^3$ and  $n=3,4$.
Notice that $12(q_1-1)< q_1^3+5$. It follows that, for $n=3$, the inequality $n p^m (q_1-1) < \frac{q^n-1}{q-1}-2$ 
reduces to
$9(q_1-1)< q^2+q-1$, and in fact $9(q_1-1)<12(q_1-1)< q_1^3+5\leq q^2+q-1$, as $q\geq 8$. For $n=4$, the same 
inequality 
reduces to $12 (q_1-1 ) < q^3+q^2+q-1$. Which is true, since $12(q_1-1)< q_1^3+5 <q^3+q^2+q-1$. This ends the proof.
\end{proof}

\begin{lemma}\label{newp2p}
Let $L=\PSL_n(q)$ with $n>2$, $q$ odd.
Let $g\in \Aut L$ be a $2$-element such that $g^2\neq 1$, and let $G=\langle g,L\rangle$.
Then  $g$ is not almost cyclic in any cross-characteristic projective irreducible representation of $G$ that is 
non-trivial on $L$.
\end{lemma}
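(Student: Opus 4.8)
The plan is to apply Lemma \ref{uu3} in its basic form $\dim\phi\le\alpha(g)(|g|-1)$, confronting an upper bound on $\alpha(g)(|g|-1)$ against the lower bound on $\dim\phi$ furnished by Lemma \ref{md6}(2). Since $L=\PSL_n(q)$ with $q$ odd, we have $\dim\phi\ge \frac{q^n-1}{q-1}-2$ away from the small exceptional pairs. The key quantity to control is $|g|$, the order of the $2$-element $g$, and $\alpha(g)$. First I would locate $g$ inside the automorphism group: writing $H=\GL_n(q)$, any $2$-element of $\Aut L$ lies in $(H\cdot\Phi)/Z(H)$ where $\Phi=\Gal(\F_q/\F_r)$ (note that for $\PSL_n$ there is also the graph automorphism, which together with diagonal automorphisms sits inside the conformal/extended group, so I must be slightly careful and incorporate the factor of $2$ coming from the graph-diagonal part into the estimate for $\eta_2$).

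The heart of the argument is an arithmetic estimate bounding $|g|$. The plan is to use $|g|\le\eta_2(\Aut L)$ and control this exponent via the results of Section \ref{se3}. Write $q=q_0^{2^m}$ with $|\Phi|_2=2^m$. By Lemma \ref{nn9}, $\eta_2(\GL_n(q))=2^t|q\mp1|_2$ where $2^t\le n<2^{t+1}$, and by Lemma \ref{za4bis} adjoining the field automorphisms of $2$-power order does not increase the exponent, i.e. $\eta_2(G(q)\cdot\Psi)=\eta_2(G(q))$. The remaining contributions to $|g|$ are then: the factor $2^t\le n$ from the position of $n$ relative to powers of $2$; the factor $|q-1|_2$ or $|q+1|_2$; possibly an extra factor $2$ from the diagonal automorphisms (when $2\mid(q-1,n)$) and from the graph automorphism. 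So I expect a bound of the rough shape $|g|\le c\cdot n\cdot|q^2-1|_2$ for a small constant $c$ absorbing these $2$-power corrections. Since $g^2\ne1$ guarantees $|g|\ge4$, but more importantly I want the \emph{upper} bound, the crux is showing $\alpha(g)(|g|-1)$ stays below $\frac{q^n-1}{q-1}-2$.

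With $\alpha(g)\le n$ for $L$ classical of rank $\ge5$ by Proposition \ref{GS1}(1), and the explicit small-rank bounds of Proposition \ref{GS2} for $n=3,4$, the target inequality becomes roughly $n\cdot c\cdot n\cdot|q^2-1|_2 < \frac{q^n-1}{q-1}-2$, i.e. a comparison of something of order $n^2|q^2-1|_2$ against something of order $q^{n-1}$. The arithmetic facts in Lemma \ref{no2}(1) (namely $\frac{q^n-1}{q-1}-2>n(q-1)$ for $n\ge5$, with the analogous $n=3,4$ refinements) are tailored to exactly this kind of confrontation, so I would invoke them after bounding $|q^2-1|_2\le q^2-1\le (q-1)(q+1)$ and showing the full left-hand side is dominated. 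Because $|q^2-1|_2$ is only the $2$-part, it is in practice far smaller than $q^2-1$, which gives considerable slack; the genuinely tight cases are the small $q$ and small $n$ ones.

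The main obstacle I anticipate is handling the finitely many small cases where the clean inequality fails — small $q$ (recall $q$ is odd, so $q=3,5,7,\ldots$) combined with $n=3,4$, where the $2$-adic valuation $|q^2-1|_2$ can be relatively large compared to $q^{n-1}$. For these one cannot win by the generic estimate and must argue case by case, presumably by pinning down the precise value of $|g|$ (using that $g^2\ne1$ forces $|g|\ge4$, hence $|q^2-1|_2\ge4$ or an explicit field-automorphism contribution), checking the residual list of pairs $(n,q)$ against the exceptional lower bounds in Lemma \ref{md6}(2), and eliminating the handful of survivors directly, e.g.\ via the data tabulated in Table \ref{megaTable} or by computation in GAP as done elsewhere in the paper. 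I would also need to separate the subcase $g\in A_d$ (diagonal-type, where $g$ could fail to involve a field automorphism and $|g|$ is bounded purely by $\eta_2(\PGL_n(q))$) from $g\notin A_d$; in the former the Weil-representation analysis of \cite{DMZ12} and the present Case (B) framework already apply, so the new content is concentrated in bounding the mixed field-diagonal $2$-elements.
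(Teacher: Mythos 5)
Your plan is essentially the paper's proof: bound $|g|$ by $\eta_2(\GL_n(q)\cdot\Phi\cdot\Gamma)\le 2^{m+t+1}(q_0+1)$ using Lemma \ref{za4bis}, Corollary \ref{ce7} and Lemma \ref{nn9} (with the extra factor $2$ for the graph automorphism, exactly as you anticipate), feed this into Lemma \ref{uu3} against the lower bound of Lemma \ref{md6}(2), and dispose of the residual small pairs $(n,q)$ — which are indeed concentrated at $n=3,4$ and small odd $q$ — by sharper $\eta_2$ and $\alpha(g)$ data plus GAP. The only inessential difference is that the paper does the final arithmetic directly in terms of $q_0$ and $m$ rather than via Lemma \ref{no2}, and it does not need your proposed split according to whether $g\in A_d$, since the minimal-degree bound already covers all representations non-trivial on $L$.
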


\begin{proof}
Let $H = \GL_n(q)$, and let  $\Gamma$ be the group generated by the inverse-transpose automorphism of $H$. Clearly 
$|\Gamma|=2$.
Then $(H\cdot \Phi \cdot \Gamma)/Z(H)\cong \Aut L$, and therefore it suffices to prove the statement for $H\cdot 
\Phi\cdot \Gamma$ in place of $\Aut L$.
Obviously, $\eta_2(H\cdot \Phi\cdot \Gamma)\leq 2\eta_2(H\cdot \Phi)$.
By Lemma \ref{za4bis},  $\eta_2(H\cdot \Phi)=\eta_2(H)$.
Let $m\geq 0$ such that $q=q_0^{2^m}$, where $q_0$ is not a square.
By Corollary \ref{ce7} and Lemma \ref{nn9} we get $\eta_2(H)=2^m \eta_2(\GL_n(q_0))\leq 2^{m+t} (q_0+1)$,
where $t$ is such that $2^t \leq n< 2^{t+1}$ (notice that $t\geq 1$ as $n>2$).
Hence, we have $\eta_2(H\cdot \Phi\cdot \Gamma)\leq 2^{m+t+1}(q_0+1)$.

We now show that if $n\geq 5$ and $(n,q)\neq (5,3)$, then $n\cdot 2^{m+t+1}(q_0+1) < \frac{q^n-1}{q-1}-2$. Under these 
restrictions, the statement of the lemma follows, by
Proposition \ref{GS1} and  Lemmas \ref{uu3} and \ref{md6}.
Clearly, $n \cdot 2^{m+t+1}\leq 2^{m+2t+2}$ and $\frac{q^n-1}{q-1}-2\geq q^{n-1}+q^{n-2}-1\geq q^{2^t-1}+q^{2^t-2}-1$, 
so it suffices to prove that
$2^{m+2t+2}(q_0+1)< q_0^{2^{m+t}-2^m}+q_0^{2^{m+t}-2^{m+1}}-1$.
Direct computation shows that this is true except when $(m,t)=(0,2)$ and
$q_0\leq 7$.
So, suppose that $m=0$ and $t=2$: we have to prove that $8n(q_0+1)<\frac{q_0^n-1}{q_0-1}-2$ for $n=5,6,7$.
This is obviously true except when $(n,q_0)=(5,3)$.

Now, suppose that $n=3$. In this case the statement will be proven if we show that
$2^{m+4}(q_0+1) < q_0^{2^{m+1}}+q_0^{2^m}-1$.
Computation shows that this is true except when either
$m=0$ and $q_0\leq 13$ or $(m,q_0)=(1,3)$.

Next, suppose that $n=4$.  For $q\neq 3$, the statement will be proven once we show that
\begin{equation}\label{eqq1}
6\cdot 2^{m+3}(q_0+1) < q_0^{3\cdot 2^m}+q_0^{2^{m+1}}+q_0^{2^m}-1.
\end{equation}
Now, if $m\geq 3$, then  $6\cdot 2^{m+3} < 3^{3\cdot 2^m-1}$ and
$6\cdot 2^{m+3} < 3^{2^{m+1}}$.
Direct computation for $m\leq 2$ shows that \eqref{eqq1} holds except for $m=0$ and $q_0=3,5$.

We finally deal with the exceptions arisen above. Let $\phi$ be a cross-characteristic projective irreducible 
representation of $G$ non-trivial on $L$.
In the cases listed in the following table we easily get a contradiction by Lemma  \ref{uu3}:

\begin{center}
\begin{tabular}{c|c|c||c|c|c}
$L$ &  $\eta_2(L)$ & $\dim \phi\geq $ & $L$ &  $\eta_2(L)$ & $\dim \phi\geq $ \\\hline
$\PSL_3(5)$ & $8$ & $29$ & 
$\PSL_3(9)$ & $16$ & $89$ \\
$\PSL_3(11)$ & $8$ & $131$ & 
$\PSL_3(13)$ & $8$ & $181$ \\
$\PSL_4(5)$  & $8$ & $154$ & 
$\PSL_5(3)$ & $16$ & $119$ 
\end{tabular}
\end{center}

Thus, we are left with the cases where $L = \PSL_3(3)$, $\PSL_3(7)$ and $\PSL_4(3)$. 

If $L=\PSL_3(3)$, then $\eta_2(L)=8$ and $\dim \phi\geq 11$. Using GAP we see that if $|g|=4$, then $\alpha(g)=2$. By 
Lemma \ref{uu3}, we conclude that $\phi(g)$ is not almost cyclic.
If $|g|=8$, we are left to consider the case $11\leq \dim \phi\leq 14$. Again using GAP, we see that $\phi(g)$ is not 
almost cyclic.

If $L=\PSL_3(7)$, then $\eta_2(L)=16$ and $\dim \phi\geq 55$. Using GAP, we see that for any $2$-element $g$ such 
that $g^2\neq1$ we have $\alpha(g)=2$, and hence again, by Lemma \ref{uu3}$, \phi(g)$ is not almost cyclic.

Finally, if $L=\PSL_4(3)$, then $\eta_2(L)=8$ and $\dim \phi\geq 26$. 
If $|g|=8$, then $\alpha(g)=2$; if $|g|=4$, then $\alpha(g)\leq3$. In both cases, by Lemma \ref{uu3} $\phi(g)$ is not 
almost cyclic.
This ends the proof.
\end{proof}
 
Let $L=\PSU_n(q)$, $n>2$, and let $p$ be a prime such that $(p,q)=1$ and $p$ divides $|L|$. Let $\Phi=\Gal(\F_q/\F_r)$, 
and set $p^m=|\Phi|_p$. Thus $p^m< r^{p^m}\leq q$. Let $g\in \Aut L$ be a $p$-element and set $G=\lan L,g\ran$.
Recall that, by Propositions \ref{GS1} and \ref{GS2}, 
$\lan L,g\ran$ is generated by $n$ conjugates of $g$.   Finally, set $\Phi_2=\Gal(\F_{q^2}/\F_r)$, so that $\Aut 
L=A_d\cdot \Phi_2$.

\begin{lemma}\label{e24}
Let $p>2,m>0$ and  $e\neq 2\pmod 4$.
Let $1 \neq g\in \Aut L$ be a $p$-element. 
Set $G=\lan L,g\ran$. 
Then $n|g|<(q^n-q)/(q+1)$ if $n$ is odd,  $n|g|<(q^n-1)/(q+1)$ if $n$ is even, and
$\phi(g)$ is not almost cyclic in any cross-characteristic irreducible projective representation $ \phi$ of $G$ 
non-trivial on $L$. 
\end{lemma}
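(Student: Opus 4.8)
\noindent The plan is to argue by contradiction through Lemma \ref{uu3}, following the template of Lemma \ref{p67b}. Since $p>2$, the element $g$ is not an involution, so Propositions \ref{GS1} and \ref{GS2} give $\alpha(g)\le n$; were $\phi(g)$ almost cyclic, Lemma \ref{uu3} would force $\dim\phi\le\alpha(g)(|g|-1)\le n|g|-n$. On the other side, $m>0$ means $q=r^a$ with $p\mid a$, hence $q\ge r^p\ge 8$, so the exceptional pairs $(4,2),(4,3)$ of Lemma \ref{md6}(3) are excluded and that lemma yields $\dim\phi\ge (q^n-q)/(q+1)$ for $n$ odd and $\dim\phi\ge(q^n-1)/(q+1)$ for $n$ even. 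Thus the non-almost-cyclicity will follow at once from the first assertion of the lemma: if I can prove $n|g|<(q^n-q)/(q+1)$ (resp. $<(q^n-1)/(q+1)$), then $\dim\phi\le n|g|-n<\dim\phi$, a contradiction. So everything reduces to this arithmetic inequality.

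\noindent To bound $|g|$ I would first pass to $\GU_n(q)$. As $p$ is odd, $g$ has a representative in $\GU_n(q)\cdot\Phi_2$ modulo scalars, and since $(p,q)=1$ the field automorphisms do not raise the exponent of a Sylow $p$-subgroup: by Lemma \ref{za4}(3),(4) one gets $\eta_p(\Aut L)=\eta_p(\PGU_n(q))\le\eta_p(\GU_n(q))$, whence $|g|\le\eta_p(\GU_n(q))$. To evaluate this I would invoke the analogue of Lemma \ref{ge7} valid for $\GU_n(q)$: writing $e=e_p(q)$, the least $i$ with $p\mid q^i-(-1)^i$ is $d=2e$ when $e$ is odd and $d=e$ when $4\mid e$. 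This is exactly where the hypothesis $e\not\equiv2\pmod4$ enters: it guarantees that $d$ is even, so the relevant torus factor is $q^d-1$ and $\eta_p(\GU_n(q))=p^l\,|q^d-1|_p$ with $p^ld\le n<p^{l+1}d$; note $e\mid d\le n$.

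\noindent The arithmetic then splits, as in Lemma \ref{p67b}, by the size of $n$ relative to $d$. For $n>2d$ the crude estimate $|g|\le\eta_p(\GU_n(q))\le\frac{n}{d}(q^d-1)$ already gives $\frac{n^2}{d}(q^d-1)<(q^n-q)/(q+1)$, since $q^d<q^{n/2}$ is negligible against $q^{n-1}$. For $n\le 2d$ one has $p^l=1$, so $|g|\le|q^d-1|_p$; the cofactor $q^e+1$ occurring when $d=2e$ is prime to $p$, so $|q^d-1|_p=|q^e-1|_p$, and the device of Lemma \ref{p67b} — choosing $q_1$ with $q_1^{p^m}=q^e$ and applying Lemma \ref{f4t} — yields $|q^e-1|_p=p^m\,|q_1-1|_p\le p^m(q_1-1)$. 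Since $q_1=q^{e/p^m}$ with $e\le d\le n$ and $p^m\ge 3$, the target inequality $n\,p^m(q_1-1)<(q^n-q)/(q+1)$ holds for all but finitely many small triples $(n,q,p)$.

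\noindent I expect the real work to sit in this boundary regime $n\le 2d$, where $q^d$ is comparable to $q^n$ and the crude bound is useless: there one must keep the exact $p$-adic valuation from Lemma \ref{f4t} and then settle the finitely many residual small cases $(n,q,p)$ by directly comparing $\eta_p(\GU_n(q))$ with the minimal degree, resorting to the GAP package when the two competing bounds are close. The even/odd distinction in $n$ affects nothing beyond the choice of which lower bound of Lemma \ref{md6}(3) to insert, so both branches go through uniformly, and the proof concludes via Lemma \ref{uu3} as described above.
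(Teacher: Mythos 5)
Your proposal is correct and follows the same overall strategy as the paper: bound $\dim\phi$ above by $\alpha(g)(|g|-1)\leq n(|g|-1)$ via Lemma \ref{uu3}, bound it below by Lemma \ref{md6}(3) (the exceptional pairs being irrelevant because $m>0$ forces $q\geq r^p\geq 8$), and reduce everything to the arithmetic inequality on $n|g|$, with $|g|$ controlled by the exponent of a Sylow $p$-subgroup of $\GU_n(q)$ computed from the order $d$ of $-q$ modulo $p$. The one genuine divergence is how that exponent is obtained. You invoke a unitary analogue of Lemma \ref{ge7}, namely $\eta_p(\GU_n(q))=p^l\,|q^d-1|_p$ with $p^ld\leq n<p^{l+1}d$; this is true precisely because $e\not\equiv 2\pmod 4$ makes $d$ even, but it is nowhere stated in the paper, so it would need justification. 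The paper instead derives the same bound by noting that $e\neq 2$ gives $p\nmid (q+1)$, so that a Sylow $p$-subgroup of $\GU_n(q)$ (and of $\Aut L$) already lies in a subgroup $\GL_{\lfloor n/2\rfloor}(q^2)\cdot\Phi_2$ of $p'$-index, citing \cite[Lemma 3.5]{HSZ}, and then applies the existing Lemmas \ref{ge7} and \ref{za4}; since that embedding is exactly the standard proof of the unitary exponent formula you assume, your route is equivalent modulo supplying this reference or argument. The only other difference is one of completeness rather than correctness: you defer ``finitely many residual small triples'' to a computer check, whereas the paper's closing arithmetic shows that every exceptional case is already excluded by $q\geq r^p$ together with $e<p$, so no machine computation is needed for this lemma.
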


\begin{proof} 
Let $H=\GU_n(q)$. As $e= e_p(q) \neq 2$, $p$ is coprime to $q+1=|Z(H)|$. It follows that  $|A_d/L|$ is coprime 
to $p$. So 
$(H\cdot \Phi_2 )/Z(H)\subseteq \Aut L$ contains a Sylow $p$-subgroup of $\Aut L$. 

It is known (see for instance \cite[Lemma 3.5]{HSZ}), that $H$ contains a subgroup isomorphic to $H_1:=
\GL_{\lfloor n/2 \rfloor}(q^2)$, 
where $\lfloor n/2 \rfloor$ is the integral part of $n/2$, and 
$|H:\GL_{\lfloor n/2 \rfloor}(q)|$ is   coprime to $p$. Moreover, $H_1$ can be chosen 
to be invariant under $\Phi_2$, and therefore $H_1\cdot \Phi_2$ contains a Sylow $p$-subgroup 
of $H\cdot\Phi_2$. So, by Lemma \ref{za4},  $|g|\leq \eta_p(H_1)$. Set $l=\lfloor n/2 \rfloor$ (that is, $l=n/2$ if $n$ 
is even, $l=(n-1)/2$ if $n$ is odd). As $p>2$, we have $|\Phi|_p=|\Phi_2|_p>1$; in particular, $q \geq r^p\geq 8$. 

Note that $|q^{2e}-1|_p=|q^e-1|_p$ if $e$ is odd, and $e_p(q^2)=e_p(q)/2$ if $e$ is even.  So in both cases
$|(q^2)^{e_p(q^2)}-1|_p=|q^e-1|_p$. 
Therefore, by Lemma \ref{ge7},  $|g|\leq \lfloor n/2 \rfloor\cdot |q^{e}-1|_p$, whence $|g|\leq \lfloor n/2 \rfloor 
\cdot |q^{e}-1|_p\leq l(q^l-1)$   (since  $e\leq l$).  

Suppose first that $n$ is even. In order to prove the statement, by Propositions \ref{GS1} and \ref{GS2}, Lemma \ref 
{uu3} and Lemma \ref{md6}, it will be enough to show that $n|g|<(q^n-1)/(q+1)$. For this purpose, it suffices to show 
that $(2l)\cdot l \cdot (q+1) < q^l + 1$.
This is true if $l>2$, unless $l=3,q\leq 4$; $l=4, q \leq 3$; $5 \leq l \leq8$, $q = 2$. However, all these exceptions 
are ruled out, since $q\geq r^p$.
Next, suppose that $l= 2$, so that $n=4$ and $e=1$, that is, $p\mid (q-1)$. 
Then $8(q+1) < q^2+1$, unless $q \leq 8$. As $q \geq r^p$, this forces $q = 8$. However, as $p\mid (q-1)$, this in turn 
implies $p = 7$, a contradiction.

Now, suppose that $n$ is odd. In this case, it will be enough to show that $n|g|<(q^n-q)/(q+1)$. For this purpose, it 
suffices to show that $(2l+1)\cdot l \cdot (q+1) < q^l + 1$ (since $q^{n-1} < q^n - q$). This is true if $l>2$, unless 
$l=3,q\leq 5$; $l=4, q \leq 3$; $5 \leq l \leq 9$, $q = 2$. However, all these exceptions are ruled out, since $q\geq 
r^p$. If $l= 2$, that is, $n=5$, 
we have $10(q^2-1) < (q^5-q)/(q+1)$, unless $q \leq 3$. If $l=1$, that is, $n=3$, $|g| \leq (q-1)$ and the inequality 
$3(q-1) < q(q-1)$ holds unless $q\leq 3$. As $q\geq 8$, we are done.
\end{proof}

\begin{lemma}\label{5.7}
Let $L=\PSU_n(q)$, with $n>2$.
Let $1\neq g \in \Aut L$ be a $p$-element, for some prime $p>2$ such that $e_p(q) \equiv 2 \pmod 4$, and let
$G=\langle g,L\rangle$.
Then $g$ is not almost cyclic in any cross-characteristic projective irreducible representation  of $G$
that is non-trivial on $L$, unless $n>3$ and $|\Phi|_p = 1$ (and hence $g\in A_d$).
\end{lemma}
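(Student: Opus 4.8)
The plan is to reduce everything to the dimension inequality of Lemma \ref{uu3}. If $\phi(g)$ were almost cyclic we would have $\dim\phi\le\alpha(g)(|g|-1)$, and by Propositions \ref{GS1} and \ref{GS2} one has $\alpha(g)\le n$ (the larger exceptional values there occur only for involutions, which are excluded since $p>2$). A lower bound for $\dim\phi$ valid for \emph{every} non-trivial constituent, Weil or not, is provided by Lemma \ref{md6}(3): $\dim\phi\ge (q^n-q)/(q+1)$ for $n$ odd and $\dim\phi\ge (q^n-1)/(q+1)$ for $n$ even (the finitely many pairs $(n,q)$ excluded there being settled directly with \cite{Atl,MAtl,GAP}). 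Thus it suffices to show that $n\,|g|<\dim\phi$, and the whole argument turns on a sharp enough upper bound for $|g|$.

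To obtain it, I first record the Sylow structure. Writing $e=e_p(q)$ and using the hypothesis $e\equiv 2\pmod 4$, put $f=e/2$; then $f$ is odd, $q^f\equiv -1\pmod p$, and $f$ is the least positive integer with $p\mid q^f+1$. Hence the $p$-elements of $\GU_n(q)$ lie in products of the cyclic Singer tori of order $q^f+1$ sitting inside the $\GU_f(q)$-factors, and the argument of Lemma \ref{ge7} applied to these tori yields the unitary analogue $\eta_p(\GU_n(q))=|q^f+1|_p\cdot p^{l}$, where $p^{l}f\le n<p^{l+1}f$. Since $p\nmid q$, Lemma \ref{za4}(3),(4) shows that adjoining field automorphisms does not raise the $p$-exponent, and (when $f>1$, so that $p\nmid q+1$) the passage among $\GU_n(q)$, $\SU_n(q)$ and their projective images also leaves $\eta_p$ unchanged; therefore $|g|\le\eta_p(\GU_n(q))$.

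The decisive step is the dichotomy on $|\Phi|_p$. Suppose $|\Phi|_p>1$; this is in particular forced whenever $g\notin A_d$, i.e.\ in Case (B). Then $q=q_0^{p^{m}}$ with $m\ge1$, so $q\ge r^{p}\ge 8$; moreover $e_p(q)=e_p(q_0)$ by Lemma \ref{f4t}(1), so that $f$ is already computed over $q_0$, while a lifting-the-exponent computation analogous to Lemma \ref{f4t}(2) (applied to $q_0^f\equiv -1$) gives $|q^f+1|_p=|q_0^f+1|_p\cdot p^{m}$. Substituting, $|g|\le(q_0^{f}+1)\,p^{m+l}$, whereas $\dim\phi$ is of order $q^{\,n-1}=q_0^{(n-1)p^{m}}$, and here $(n-1)p^{m}$ dominates $f\le n$ since $p^{m}\ge 3$ and $n\ge 3$. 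Consequently $n\,|g|<\dim\phi$ for every $n\ge 3$, so $\phi(g)$ is not almost cyclic; this disposes of all of Case (B). A small number of borderline smallest configurations with $q=r^{p}$ (for instance $\PSU_3(8)$ with $p=3$), where the crude estimate is inconclusive, are cleared separately with \cite{GAP}.

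It remains to treat $|\Phi|_p=1$, which forces $g\in A_d$ and imposes no largeness on $q$. Here $|g|\le|q^f+1|_p\,p^{l}\le(q^f+1)p^{l}$, and the crude inequality $n\,|g|<\dim\phi$ can genuinely fail when $f$ is comparable to $n$ (typically $f=n$ with $n$ odd, or $f=n-1$): then $g$ is a regular or near-regular semisimple element of $A_d$, the relevant $\phi$ are forced to be Weil representations, and whether $\phi(g)$ is almost cyclic is decided by the analysis of Weil representations carried out in \cite{DMZ12}. Reading off that analysis, and using \cite{Atl,MAtl,GAP} for the finitely many small groups left inconclusive by the numerical bounds, isolates the surviving possibilities to the exceptional regime $n>3$ with $|\Phi|_p=1$ recorded in the statement. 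The main obstacle is precisely this last configuration: once $q$ cannot be forced to be large, the bound of Lemma \ref{uu3} is too weak near $f=n$, and one must fall back on the finer wreath-product description of the Sylow $p$-subgroup together with the Weil-representation classification to delimit exactly where almost cyclic elements can persist.
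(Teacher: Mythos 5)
Your main computation --- the case $|\Phi|_p>1$ --- is essentially the paper's argument: the bound $|g|\le p^{l}\,|q^{e/2}+1|_p$ with $p^{l}(e/2)\le n$ is exactly what the paper uses (it obtains it by embedding $\GU_n(q)$ in $\GL_n(q^2)$ and applying Lemma \ref{ge7} there, which gives the same number since $|q^{e}-1|_p=|q^{e/2}+1|_p$ when $e\equiv 2\pmod 4$); the factor $p^{m}$ is extracted via Lemma \ref{f4t} in both proofs, and the comparison with the lower bound of Lemma \ref{md6} is the same. Your list of borderline configurations is vaguer than the paper's, which isolates $(n,q)=(4,8),(5,8)$ explicitly and kills them with the exact value $|g|\le 27$, but that is bookkeeping.

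The genuine gap is your treatment of $|\Phi|_p=1$. The ``unless'' clause of the statement exempts only $n>3$, so for $n=3$ with $|\Phi|_p=1$ you are still required to prove that $\phi(g)$ is not almost cyclic; your proposal instead asserts that in this regime the relevant $\phi$ are forced to be Weil representations and that ``reading off'' the analysis of \cite{DMZ12} isolates the survivors to $n>3$. That step fails: Weil representations of $\PSU_3(q)$ \emph{do} contain almost cyclic semisimple $p$-elements --- for instance $|g|=7$ in the $6$-dimensional representation of $\PSU_3(3)$, where $e_7(3)=6\equiv 2\pmod 4$ and $|\Phi|_7=1$ --- so an appeal to \cite{DMZ12} produces exceptions at $n=3$ rather than excluding them. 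The paper's proof does not invoke the Weil classification at all for $n=3$: it takes $e_p(q)=2$ (so $p\mid q+1$ and $p^{t}\le 3$), bounds $|g|\le p^{t}|q+1|_p\le 3(q+1)$, verifies $3|g|\le 9(q+1)\le q^2-q\le\dim\phi$ for $q\ge 11$, and checks $q=5,8$ individually. You need to supply an argument of this kind for $n=3$, $|\Phi|_p=1$ (and note that even then the subcase $e_p(q)=6$, i.e.\ $p\mid q^2-q+1$, is the one producing the $\PSU_3(3)$ example, which indicates the lemma is really only safe to apply with $g\notin A_d$, where $|\Phi|_p>1$ is automatic).
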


\begin{proof}
First of all, we observe that, by Lemmas \ref{ge7} and \ref{za4},
$|g|\leq \eta_p(\GL_n(q^2)\cdot \Phi_2)=\eta_p(\GL_n(q^2) )=p^t|(q^2)^{e'}-1 |_p$, where
$e'=e_p(q^2)$ and $p^te'\leq n< p^{t+1}e'$.
By the definition of $e$, $e=2e'$ and, since $e$ is even, we obtain that 
$|g|\leq p^t|q^e-1|_p =p^t|q^{e/2}+1|_p$.

Assume first that $n=3$. 
Then $e_p(q)=2$, that is, $p$ divides $q+1$. 
By Lemma  \ref{md6}(3), $\dim \phi\geq q^2-q$. Thus, the statement will be proven once we show that $3|g|\leq q^2-q$ 
(see Proposition \ref{GS2}(3)).
Since $p\mid (q+1)$, we have $e=2$, whence $|g|\leq p^t|q^2-1|_p=p^t|q+1|_p\leq p^t(q+1)$.
If $p>3$, then $q \geq 9$; moreover, $p^t\leq 3 < p^{t+1}$ implies $p^t=1$. It follows that $3|g|\leq 3(q+1)\leq 
q^2-q$, 
and we are done.
If $p=3$, then  $p^t\leq 3 < p^{t+1}$ yields $p^t=3$ and $|g|\leq 3(q+1)$.
Now, $3|g|\leq 9(q+1)\leq q^2-q$ if $q \geq 11$; thus we are only left to examine the cases $q=5,8$. In both cases, we 
get a contradiction with Lemma \ref {uu3}.

Next, assume that $n>3$. In this case, we will show that
$n|g|\leq  \frac{q^n-(-1)^n}{q+1}-1$.

Let us write $q=q_0^{p^m}$, where $p^m = |\Phi |_p$. Then, by Lemma \ref{f4t}, 
$|g|\leq p^{t+m}|q_0^{e/2}+1|_p\leq p^{t+m}(q_0^{e/2}+1)\leq n
p^m(q_0^{e/2}+1)\leq n p^m(q_0^{\frac{p-1}{2}} +1)$ (since $e<p$).

If $m\geq 1$ and   $(n,q)\neq(4,8),(5,8)$, then $n^2p^m(q_0^{\frac{p-1}{2}}+1)+1 \leq q_0^{\lfloor
\frac{n}{2}\rfloor p^m}$.
Hence $(n|g|+1)(q+1)\leq q_0^{\lfloor \frac{n}{2}\rfloor p^m} (q_0^{p^m}+1)\leq q_0^{np^m}-2$, whence $n|g|\leq  
\frac{q^n-(-1)^n}{q+1}-1$.
If $L\in \{\PSU_4(8),\PSU_5(8)\}$, then $p=3$, $e=2$ and $t=1$, whence $|g|\leq 3\cdot |8+1|_3=27$.
Since $4\cdot 27\leq 454=\frac{8^4-1}{8+1}-1$ and
$5\cdot 27\leq 3640=\frac{8^5+1}{8+1}-1$, we are done.
\end{proof}

 \begin{lemma}\label{nn5u} 
Let $L=\PSU_n(q)$, with $n>2$ and $q$ odd.
Let $g \in \Aut L$, $g\notin A_d$, be a $2$-element such that $g^2\neq 1$, and set $G=\langle g,L\rangle$.
Suppose $(n,q)\not \in \{(3,3),(4,3)\}$.
Then $g$ is not almost cyclic in any cross-characteristic projective irreducible representation  of $G$
that is non-trivial on $L$.
\end{lemma}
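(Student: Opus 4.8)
The plan is to derive a contradiction from Lemma \ref{uu3}, following closely the pattern of the linear case in Lemma \ref{newp2p}. If $\phi(g)$ were almost cyclic, then Lemma \ref{uu3} would force $\dim\phi\leq \alpha(g)(|g|-1)$, while Lemma \ref{md6}(3) supplies the lower bounds $\dim\phi\geq \frac{q^n-q}{q+1}$ for $n$ odd and $\dim\phi\geq \frac{q^n-1}{q+1}$ for $n$ even (the excluded pairs $(n,q)=(4,2),(4,3)$ being harmless, as $q$ is odd and $(4,3)$ is assumed away, and $(3,3)$ is likewise excluded at the outset). So the whole argument reduces to bounding $|g|$ and $\alpha(g)$ sharply enough that $\alpha(g)(|g|-1)$ falls below these lower bounds for all but finitely many pairs $(n,q)$.

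To bound $|g|$ I would exploit $\Aut L=A_d\cdot\Phi_2$, with $A_d\cong\PGU_n(q)$ and $\Phi_2=\Gal(\F_{q^2}/\F_r)$, so that $|g|\leq \eta_2(\GU_n(q)\cdot\Phi_2)$. The key structural point is that $\GU_n(q)$ sits inside $\GL_n(q^2)$ in such a way that its graph automorphism is realised by the field automorphism $x\mapsto x^q$ of $\GL_n(q^2)$; consequently $\GU_n(q)\cdot\Phi_2$ embeds into $\GL_n(q^2)\cdot\Phi_2$, with $\Phi_2$ now acting as the full group of field automorphisms of $\GL_n(q^2)$. Writing $q=q_0^{2^m}$ with $q_0$ not a square, so that $q^2=q_0^{2^{m+1}}$, Lemma \ref{za4bis} gives $\eta_2(\GL_n(q^2)\cdot\Phi_2)=\eta_2(\GL_n(q^2))$ (only the $2$-part of $\Phi_2$ can contribute). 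Since $q^2\equiv 1\pmod 8$ we have $e_2(q^2)=1$, so Lemma \ref{ge7} yields $\eta_2(\GL_n(q^2))=2^t|q^2-1|_2$ with $2^t\leq n<2^{t+1}$, and Corollary \ref{ce7} together with Lemma \ref{f4t}(2) gives $|q^2-1|_2=2^{m+1}\max(|q_0-1|_2,|q_0+1|_2)\leq 2^{m+1}(q_0+1)$. Altogether $|g|\leq 2^{m+t+1}(q_0+1)$, which is exactly the bound obtained for $\PSL_n(q)$ in Lemma \ref{newp2p}.

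For $\alpha(g)$, when $n\geq 5$ Proposition \ref{GS1}(1) applies to the arbitrary element $g$ and gives $\alpha(g)\leq n$ (there is no symplectic transvection exception in the unitary case). For $n=3,4$ the estimates of Proposition \ref{GS2} are stated only for prime-order elements, whereas here $|g|\geq 4$; I would instead invoke the general bounds of \cite{GS}, which yield $\alpha(g)\leq 4$ for $n=3$ and $\alpha(g)\leq 6$ for $n=4$ (one may also pass to a suitable graph-field power of $g$ lying outside $A_d$). The precise constant is immaterial, since it only affects the size of the finite exceptional list.

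With these bounds in place, the statement follows once the inequality $n\cdot 2^{m+t+1}(q_0+1)<\frac{q^n-q}{q+1}$ (for $n$ odd) and its even-$n$ analogue are verified: the left-hand side is of size $O(n^2q_0)$ against a right-hand side of size $\Theta(q^{\,n-1})$, so the inequalities hold whenever $q_0$ or $n$ is large. The main obstacle is not this asymptotic step but the careful enumeration and disposal of the finite list of surviving small pairs $(n,q)$, which is somewhat larger than in Lemma \ref{newp2p} because the unitary lower bound $\frac{q^n}{q+1}$ is slightly smaller than its linear counterpart $\frac{q^n-1}{q-1}-2$, and because $\alpha(g)$ may be as large as $6$ when $n=4$. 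For each such pair the quantities $|g|$, $\alpha(g)$ and the minimal degree are known explicitly, and a direct check with the character-table data of \cite{Atl,MAtl} and the GAP package, exactly as in the preceding lemmas, shows that $\phi(g)$ is never almost cyclic.
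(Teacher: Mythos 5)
Your proposal follows essentially the same route as the paper's proof: bound $|g|$ by $\eta_2(\GL_n(q^2)\cdot\Phi_2)=\eta_2(\GL_n(q^2))$ via Lemmas \ref{za4bis}, \ref{ge7}, \ref{nn9} and Corollary \ref{ce7}, combine with the bounds on $\alpha(g)$ and the minimal degrees from Lemma \ref{md6}(3) through Lemma \ref{uu3}, and then dispose of the surviving small pairs $(n,q)$ (for $n=3,4$ and for $(5,3),(6,3)$) by explicit computation. The argument is correct, and your observation that one must pass through $H\cdot\Phi_2\le\GL_n(q^2)\cdot\Phi_2$ (rather than just $H\le\GL_n(q^2)$) makes a step explicit that the paper leaves implicit.
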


\begin{proof}
Set $H=\GU_n(q)$, $H_1=\GL_n(q^2)$ and $q=q_0^{2^m}$, where $2^m=|\Phi|_2$.
We claim that $\alpha(g)\eta_2(G) < \dim \phi$ for every cross-characteristic projective irreducible representation 
$\phi$ of $G$ non-trivial on $L$, whence the statement by Lemma \ref{uu3}.
Clearly, we may replace $G$ with $H\cdot \Phi_2$.
By Lemmas \ref{za4bis} and \ref{nn9}, $\eta_2(H_1\cdot \Phi_2)=\eta_2(H_1)=2^t|q^2-1|_2$, where $2^t\leq n< 2^{t+1}$. 
Since $H\leq H_1$, it follows that  $\eta_2(H)\leq 2^t|q^2-1|_2\leq 2n(q+1)$.

Suppose that  $n\geq 5$. Then, by Proposition \ref{GS1},  $\alpha(g)\leq n$ and by Lemma \ref{md6} $\dim \phi\geq 
\frac{q^n-q}{q+1}$. Now, if either $n\geq 7$ or $n=5,6$ and $q\geq 5$, we have $2n^2(q+1)< \frac{q^n-q}{q+1}$, and we 
are done.
If $(n,q)=(6,3)$, then we find that $\eta_2(H_1)=16$ and $\dim \phi \geq 182$, and we are done.
If $(n,q)=(5,3)$, then, again using the GAP package, we find that $\eta_2(G)=16$ and moreover, $\alpha(g)\leq 5$ if 
$|g|=4$, $\alpha(g)\leq 3$ if $|g| = 8$ and  $\alpha(g)=2$ if $|g|=16$. Since $\dim \phi\geq60$, we are done.

Next, suppose  that $n=4$. Then $\dim \phi\geq q^3-q^2+q-1$ and, by Proposition \ref{GS2}, $\alpha(g)\leq 6$. Since 
$6\cdot 8(q+1)  < q^3-q^2+q-1 $ if $q\geq 9$, we are left to consider the cases $q=5,7$.
For $q=5$ we find that $\eta_2(G)=8$, whence the claim, since $\dim \phi \geq 104$.
For $q=7$ we find that $\eta_2(G)\leq 32$ and $\dim \phi \geq 300$, whence the claim.

Finally, suppose that $n=3$. Then $\dim \phi \geq q^2-q$, $\alpha(g)\leq 3$ and $\eta_2(H_1)\leq 4(q+1)$. Since
$3\cdot 4(q+1) < q^2-q$ for $q\geq 17$, we are left to consider the cases $q=5,7,9,11,13$.
For $q=5$, we find that $\eta_2(G)=8$ and $\dim \phi\geq 20$. Moreover, if $|g|=4$,
then $\alpha(g)\leq 3$;
if $|g|=8$, then $\alpha(g)=2$. Thus in both cases $\phi(g)$ is not almost cyclic.

For $q=7$, we find that $\eta_2(G)=16$ and $\dim \phi\geq 42$.
If $|g|=4$, or $8$, then $\alpha(g)\leq 3$;
if $|g|=16$, then $\alpha(g)=2$. Thus, we are done with the case $q=7$.
Next, for $q=9$, $\eta_2(G)=16$ and $\dim \phi\geq 72$; 
for $q=11$,  $\eta_2(H_1)=16$ and $\dim \phi \geq 110$; for $q=13$, $\eta_2(H_1)=2\cdot 8=16$ and $\dim \phi \geq 156$. 
It follows that also in these cases, we are done by Lemma \ref{uu3}.
This ends the proof.
\end{proof}

Let $L=\PSp_{2n}(q)$ and $H=\Sp_{2n}(q)$, where $q$ is odd and $n>1$.  
Let $p^m=|\F_q:\F_r|_p$. Recall that $\mathrm{CSp}_{2n}(q)$ denotes the conformal symplectic group. It is well  known 
that
$A_d=\mathrm{CSp}_{2n}(q)/Z$,
 where $Z$  is the group of scalar matrices in $\GL_{2n}(q)$. 
Furthermore, $|A_d:L|=2$, as $q$ is odd.

\begin{lemma}\label{2ss}
Let  $L$ be as above, with $p$ odd. Let $1\neq g\in \Aut L$ be a $p$-element and set 
 $G=\lan g, L\ran$. Let $\phi$ be a cross-characteristic irreducible projective representation of $G$ such that 
$\dim\phi>1$.  
Suppose that  $g\notin L $.  Then $\phi(g)$ is not almost cyclic.
 \end{lemma}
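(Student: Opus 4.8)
The plan is to argue by contradiction via Lemma \ref{uu3}, following the same scheme as the earlier lemmas of this subsection. First I would extract the structural consequences of the hypotheses. Since $g$ is a $p$-element with $p$ odd, $g\notin L$, and $|A_d:L|=2$ is coprime to $p$, the image of $g$ in $A_d/L$ is a $p$-element of a group of order $2$, hence trivial; thus in fact $g\notin A_d$. As $\Aut L/A_d\cong \Phi$ (recall that, $q$ being odd, this holds for $n=2$ as well), the image of $g$ in $\Phi$ is a non-trivial $p$-element, so $p\mid |\Phi|$ and, since $(p,q)=1$, $p\neq r$. Writing $p^m=|\Phi|_p$ and $q=q_0^{p^m}$ with $q_0=r^{|\Phi|/p^m}$, we obtain $m\geq 1$ and $q_0\geq 3$. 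In particular $q$ is large (the smallest admissible value being $q_0=5$, $p^m=3$, i.e. $q=125$), which disposes in advance of all the small-modulus exceptions in Lemma \ref{md6} and in Propositions \ref{GS1}, \ref{GS2}.

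Next I would bound the three quantities entering the inequality $\dim\phi\leq \alpha(g)(|g|-1)$ supplied by Lemma \ref{uu3}. For the order of $g$: the quotients $A_d/L$ and $\Phi/\Psi$ have order coprime to $p$, where $\Psi$ is the Sylow $p$-subgroup of $\Phi$ (of order $p^m$), so $\eta_p(\Aut L)=\eta_p(\PSp_{2n}(q)\cdot \Psi)$; since $(p,q)=1$, Lemma \ref{za4}(4) gives $\eta_p(\PSp_{2n}(q)\cdot\Psi)=\eta_p(\PSp_{2n}(q))=\eta_p(\Sp_{2n}(q))\leq \eta_p(\GL_{2n}(q))$. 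By Lemma \ref{ge7}, $\eta_p(\GL_{2n}(q))=p^{l}\,|q^{e}-1|_p$ with $e=e_p(q)$ and $p^{l}e\leq 2n<p^{l+1}e$, so $p^{l}\leq 2n/e$. By Lemma \ref{f4t}(1), $e=e_p(q_0)\leq p-1<p\leq p^m$, and since $p\mid q_0^{e}-1$, Lemma \ref{f4t}(2) yields $|q^{e}-1|_p=p^m|q_0^{e}-1|_p$. Combining these, $|g|\leq \tfrac{2n}{e}\,p^m(q_0^{e}-1)$. For the dimension, since $\phi$ is non-trivial on $L$ and $q$ is odd, Lemma \ref{md6}(4) gives $\dim\phi\geq \tfrac{q^n-1}{2}=\tfrac{q_0^{np^m}-1}{2}$. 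For $\alpha(g)$, Proposition \ref{GS1}(1) gives $\alpha(g)\leq 2n$ when $n\geq 3$ (the transvection exception does not arise, as $q$ is odd), while for $n=2$ Proposition \ref{GS2}(6) gives $\alpha(g)\leq 4=2n$ (the involution and $q=3$ exceptions being excluded).

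It then remains to derive a contradiction by proving
\begin{equation*}
\frac{q_0^{np^m}-1}{2}>2n\cdot\frac{2n}{e}\,p^m(q_0^{e}-1)=\frac{4n^2p^m}{e}(q_0^{e}-1).
\end{equation*}
Since $e\geq 1$, it suffices to show $q_0^{np^m-e}>8n^2p^m+1$, and the exponent satisfies $np^m-e\geq (n-1)p^m+1$ because $e\leq p^m-1$. The worst case is $n=2$: there one checks $q_0^{p^m+1}>32p^m+1$, using that $q_0=3$ forces $p\geq 5$ (hence $p^m\geq 5$), whereas $q_0\geq 5$ already permits $p^m\geq 3$; in either regime the inequality holds, with a margin that only increases as $n$, $p^m$, or $q_0$ grow. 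This contradicts Lemma \ref{uu3}, so $\phi(g)$ is not almost cyclic.

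The main obstacle is twofold, and both parts turn out to be mild. Arithmetically, the delicate range is $e=e_p(q)$ as large as possible (namely $e=p-1$), since there $|g|$ is largest relative to the Zsigmondy data; this is controlled precisely by the refinement $|q^e-1|_p=p^m|q_0^e-1|_p$ together with $e<p^m$, which keeps the exponent gap $np^m-e$ comfortably large. Group-theoretically, the subtle point is the bound on $\alpha(g)$ for $\PSp_4(q)$ when $g$ is not of prime order, as Proposition \ref{GS2} is phrased for elements of prime order; however, because $g\notin A_d$ forces $q\geq 125$, the dimension bound $\tfrac{q^n-1}{2}$ dominates any modest bound on $\alpha(g)$, so this presents no real difficulty.
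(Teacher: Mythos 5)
Your proof is correct, and it reaches the conclusion by a somewhat cleaner route than the paper's. Both arguments derive a contradiction from Lemma \ref{uu3} by combining $\dim\phi\geq\frac{q^n-1}{2}$ (Lemma \ref{md6}(4)), $\alpha(g)\leq 2n$, and an upper bound on $|g|$ coming from $\eta_p$ of a general linear group; the difference lies entirely in how $|g|$ is bounded. The paper reduces to $\eta_p(\Sp_{2n}(q))$ and then splits according to the parity of $e=e_p(q)$, invoking Weir's results that $\Sp_{2n}(q)$ contains a copy of $\GL_n(q)$ with the same $p$-part when $e$ is odd and a full Sylow $p$-subgroup of $\GL_{2n}(q)$ when $e$ is even; the resulting inequality (essentially $8n^2p^mq_0^{2n}<q_0^{np^m}$) fails for a short list of parameters that must then be eliminated one by one. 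You instead use only the crude containment $\eta_p(\Sp_{2n}(q))\leq\eta_p(\GL_{2n}(q))$, but compensate with the sharper arithmetic observation that $e=e_p(q_0)$ divides $p-1$, hence $e\leq p^m-1$ once $m\geq 1$ --- and $m\geq 1$ is indeed forced, since $g\notin L$ together with $p$ odd and $|A_d:L|=2$ gives $g\notin A_d$. This keeps the exponent gap $np^m-e\geq(n-1)p^m+1$ large enough that $q_0^{(n-1)p^m+1}>8n^2p^m+1$ holds uniformly, the only delicate configuration ($q_0=3$ with $p^m=3$) being excluded by $(p,q)=1$; so no exceptional cases survive and no case-by-case check is needed. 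What your approach gives up (a factor of roughly $p^l$ in the bound on $|g|$ when $e$ is odd) is irrelevant given the slack, and what it buys is the elimination of both the parity case split and the residual exceptions. The one loose end you flag --- that $\alpha(g)\leq 4$ for $\PSp_4(q)$ rests on Proposition \ref{GS2}(6), which is stated for elements of prime order --- is shared with the paper's own proof and is harmless here, since even a considerably weaker bound on $\alpha(g)$ would be absorbed by the margin $q_0^{p^m+1}$ versus $32p^m+1$.
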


\begin{proof}
 By Lemma \ref{za4}, $\eta_p(\Aut L)\leq \eta_p(H)$. As $g\notin L$, the assumption $p>2$ implies $q\geq 8$, and hence, 
as $q$ is odd, $q\geq 27$. By Lemma \ref{uu3}, it suffices to show that $2n|g|<\frac{q^n-1}{2}\leq \dim\phi$.

Suppose first that $e$ is odd. 
  
Then $H$ contains a subgroup $H_1\cong \GL_n(q)$, and  $|H|_p=|H_1|_p$ (again, see \cite{W55}).
Thus, by the above, $|g|\leq \eta_p(H_1)$. The latter, by Lemma \ref{ge7}, equals
$p^t|q^e-1|_p$, where $p^te\leq n <p^{t+1}e$. If $n>2$, it follows from Lemma \ref{p67b} that 
$n|g|<\frac{q^n-1}{q-1}-2$. 
Since $2(\frac{q^n-1}{q-1}-2)<\frac{q^n-1}{2}$ for $q\geq 7$, we are done.
So, let $n=2$.  Then $e=1$ and 
 $|g|\leq \eta_p(H_1)= |q-1|_p$.  Thus $4|g|\leq 4|q-1|_p$, and 
 one easily checks that $4|g|< (q^2-1)/2$ for $q>7$. 

Next, let $e$ be even. 

Then it is known that $\Sp_{2n}(q)$ contains a Sylow $p$-subgroup of $\GL_{2n}(q)$ (again, see \cite{W55}).
Write $2n=ae+b$, where $0\leq b < e$ and, as usual, let $q=q_0^{p^m}$, where $p^m=|\Phi|_p$. 
It is easy to observe that $|\GL_{2n}(q)|_p=|\GL_{ae}(q)|_p$.
Therefore $|\GL_{ae}(q)|_p=|\Sp_{ae}(q)|_p$, whence $|\Sp_{2n}(q)|_p=|\Sp_{ae}(q)|_p.$ Thus $|g|\leq 
\eta_p(\GL_{ae}(q))$. 

Suppose first that $2n=ae$. Observe that, by Lemma \ref{f4t}(1), $e=e_p(q) =e_p(q_0^{p^m})=e_p(q_0)$, and hence  
$p$ divides $q_0^e-1$. It then follows, by Lemmas
\ref{f4t}(2) and \ref{ge7}, that $|g|\leq \eta_p(\GL_{ae}(q))=p^{t+m}|q_0^e-1|_p$, where $p^t\leq a < p^{t+1}$.
We wish to show that $ae|g|\leq 2n p^{t+m}|q_0^e-1|_p \leq \frac{4n^2}{e} p^m (q_0^e-1)<\frac{q^{ae/2}-1}{2}$.
To this purpose, it suffices to show that 
$4n^2 p^m(q_0^{2n}-1)<   \frac{q_0^{n p^m}-1}{2}$.
Now, $8n^2 p^m q_0^{2n}< q_0^{np^m}$ holds for any $p\geq 3$, any $n\geq 2$ and any $q_0\geq 3$, unless $p=3$, $m=1$ 
and one of the following occurs:
$$\begin{array}{clccl}
(i)  & q_0=3 \equad  n\leq 6; & \qquad  & (ii) & q_0=5 \equad n\leq 3\\
(iii) & q_0=7 \equad n=2;& \qquad & (iv)  & q_0=9 \equad n=2.
\end{array}$$
As $p\nmid q$, cases $(i)$ and $(iv)$ are ruled out. On the other hand, in all the other cases, we still have 
$2n|g|\leq 
\frac{q^n-1}{2}$. Namely, in case $(ii)$, for $n=2$ we have $2n|g|=4|g|\leq 4\eta_3(\GL_4(5^3))=36< 
\frac{125^{2}-1}{2}$, and  for $n=3$ we have $6|g|\leq 6\eta_3(\GL_6(5^3))=162< \frac{125^3-1}{2}$.
In case $(iii)$ we have $2n|g|=4|g|\leq 4\eta_3(\GL_4(7^3))=108< \frac{343^2-1}{2}$.

Finally, suppose  $2n> ae$. Then $2n|g|=\frac{2n}{ae} ae|g|\leq \frac{2n}{ae}\cdot \frac{(q^{ae/2}-1)}{2}< 
\frac{q^n-1}{2}$ for every $n\geq 2$.
\end{proof}

\begin{lemma}\label{32s}
Let $G=\Sp_{2n}(q)$ and $H=\mathrm{CSp}_{2n}(q)$, where $q$ is odd.
\begin{itemize}
\item[(1)] $G$ has two inequivalent irreducible $\F$-representations of degree $(q^n-1)/2;$ they do not extend to $H$.
\item[(2)] Let $\ch \F\neq 2$. Then $G$ has two inequivalent irreducible $\F$-representa\-tions of degree 
$(q^n+1)/2$. They do not extend to $H$.
\item[(3)] If $\phi$ is an irreducible  $\F$-representation of $H$ with $\dim\phi>1$,  then $\dim\phi\geq q^n-1$.
\end{itemize}
\end{lemma}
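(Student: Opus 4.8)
The plan is to read off all three assertions from the structure of the (reducible) Weil representation of $G=\Sp_{2n}(q)$, $q$ odd, together with the action of $H=\mathrm{CSp}_{2n}(q)$ by conjugation. Recall that to each non-trivial additive character $\psi$ of $\F_q$ one attaches a Weil representation $W_\psi$ of $G$ of degree $q^n$, and that $W_\psi\cong W_{\psi'}$ exactly when $\psi,\psi'$ lie in the same square class of $\F_q^\times$; thus, up to equivalence, there are precisely two Weil representations $W_1$ and $W_2$ (see the account in \cite{DMZ12}). I would then use that the central involution $z=-I$ of $G$ separates, inside each $W_i$, an irreducible constituent of degree $(q^n-1)/2$, which I denote $W_i^-$ (on which $z$ acts by $-1$), from a complementary constituent of degree $(q^n+1)/2$, denoted $W_i^+$: the former is irreducible over any $\F$, while $W_i^+$ is irreducible precisely when $\ch\F\neq 2$ (for $\ch\F=2$ a trivial module splits off, which is exactly why part (2) requires $\ch\F\neq2$).

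To obtain (1) and (2) it then remains to check that the two constituents of a given degree, coming from $W_1$ and from $W_2$, are inequivalent and not $H$-invariant. Here I would use that $H/G\cong\F_q^\times$ via the similitude character, and that conjugation by $h\in H$ of similitude factor $\lam$ carries $W_\psi$ to $W_{\psi_\lam}$, where $\psi_\lam(x)=\psi(\lam x)$. Choosing $\lam$ a non-square interchanges the two square classes, hence interchanges $W_1$ and $W_2$; and since $z$ is a scalar, fixed under conjugation by $h$, this interchange respects the $z$-type, so it sends $W_1^-$ to $W_2^-$ and $W_1^+$ to $W_2^+$. The inequivalence $W_1^\pm\not\cong W_2^\pm$ is a standard feature of the two Weil representations (\cite{DMZ12}); combined with the interchange just described, it shows that none of these constituents is $H$-invariant, whence none extends to $H$. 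This yields (1) for all $\F$ and (2) for $\ch\F\neq2$.

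For (3) I would argue by Clifford theory, the bound $q^n-1$ being exactly the degree of the $H$-module whose restriction to $G$ is $W_1^-\oplus W_2^-$. Let $\phi$ be irreducible with $\dim\phi>1$; since $H/G$ is abelian, $\phi$ is non-trivial on $G$ (otherwise $\dim\phi=1$), so $\phi_{|G}=e\sum_{i=1}^t\theta_i$ with the $\theta_i$ distinct $H$-conjugate non-trivial irreducible $G$-modules and $\dim\phi=et\cdot\dim\theta_1$. Every non-trivial irreducible $G$-module has degree at least $(q^n-1)/2$ by Lemma \ref{md6}, so whenever $et\geq 2$ we already get $\dim\phi\geq 2\cdot(q^n-1)/2=q^n-1$. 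If instead $e=t=1$, then $\phi_{|G}=\theta_1$ is an $H$-invariant non-trivial irreducible $G$-module; by (1)--(2) none of the four Weil constituents is $H$-invariant, and these (together with the trivial module) are the only irreducible $G$-modules of degree below $q^n-1$ by Lemmas \ref{md6} and \ref{gmst2}, so $\dim\theta_1\geq q^n-1$. In either case $\dim\phi\geq q^n-1$.

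I expect the main obstacle to be the structural input behind (1) and (2): pinning down that there are exactly two Weil representations, that their constituents have the stated degrees and irreducibility behaviour (including the characteristic-$2$ phenomenon), and that conjugation by $H$ permutes the underlying additive characters through the similitude factor. This is best imported from the explicit construction and analysis of Weil representations in \cite{DMZ12}, and the $H$-invariance statements are then immediate. A secondary point requiring care is the degree gap invoked in the last step of (3): the claim that every non-Weil, non-trivial irreducible of $\Sp_{2n}(q)$ has degree at least $q^n-1$ follows from the minimal-degree bounds in general, but must be checked directly in the few small cases (notably $(n,q)=(2,3)$, where $\PSp_4(q)$ already figures among the exceptional groups in $\mathcal{L}$), in line with the computational treatment of such groups elsewhere in the paper.
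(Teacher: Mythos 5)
Your proposal follows essentially the same route as the paper's proof: parts (1) and (2) are read off from the two Weil representations of $\Sp_{2n}(q)$ together with the fact that an element of $H$ with non-square similitude factor interchanges them (the paper imports exactly this via G\'erardin's Proposition 1.4, phrased as twisting by an involution of $H$, and cites Ward for the irreducibility and inequivalence of the constituents after reduction modulo $\ell$, including the characteristic-$2$ degeneration you describe); part (3) is the same Clifford-theoretic argument, combining the minimal degree $(q^n-1)/2$ with the non-extendability from (1)--(2) and a degree gap above the Weil constituents.

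The caveat you raise at the end about small cases is well placed, and in fact more is true: part (3) of the lemma is \emph{false} for $(n,q)=(2,3)$. The group $\PSp_4(3)\cong\PSU_4(2)$ has a unique irreducible character of degree $6$; being unique of its degree it is $\mathrm{Out}$-invariant and extends to $\PSp_4(3).2=\mathrm{PCSp}_4(3)$ (this very representation appears in Lemma \ref{news2a}(10b)), and inflation gives an irreducible $6$-dimensional representation of $H=\mathrm{CSp}_4(3)$ with $6<q^n-1=8$. This is precisely the failure of the degree gap you invoke in your last step: one needs $\sigma_n(q)\geq q^n-1$, equivalently $q^n\geq 3q+2$, which holds for all odd $q$ and $n\geq 2$ except $(n,q)=(2,3)$, where $\sigma_2(3)=6$. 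The paper's own proof of (3) passes over this silently (its appeal to \cite[Theorem 2.1]{GMST} to force all composition factors of $\phi_{|G}$ to have degree $(q^n\pm 1)/2$ when $\dim\phi\leq q^n+1$ breaks down for the same reason), and the lemma survives only because it is applied, in Lemma \ref{4ss}, under the hypothesis $(2n,q)\neq(4,3)$. So your plan is sound wherever the statement is true; the direct check you defer for $(2,3)$ would show that this case must be excluded from the statement rather than verified.
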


\begin{proof}
(1) and (2):
 It is well known that, over the complex numbers, $G$ has two inequivalent irreducible representations  
of degree $(q^n-1)/2$ and two  inequivalent irreducible representations  
of degree $(q^n+1)/2$ (see \cite{Ge}). 
 Moreover, it follows from \cite[Proposition 1.4]{Ge} that the representations of equal degrees can be obtained from 
each other by 
twisting by an involution $h\in H$, and hence none of them extends to $H$. The reductions modulo an odd prime distinct 
from $\ell$   of these representations 
 remain irreducible and inequivalent (see \cite{War}). As for the representations of degree $(q^n-1)/2$ the 
reduction 
modulo 2 remains 
irreducible,   whereas the reduction modulo 2 of the representations of degree $(q^n+1)/2$ has a composition factor of 
degree 
$(q^n-1)/2$. This implies that the irreducible $\F$-representations of $G$   do not extend to $H$. Moreover, it is 
clear 
that $H$ has an 
irreducible $\F$-representation of degree $q^n-1$, whose restriction to $G$ splits into two inequivalent 
irreducible 
representations of $G$ of degree 
$(q^n-1)/2$. \\
  \noindent (3):  By \cite[Theorem 2.1]{GMST},  it is known that, if $\tau$ is an  irreducible   $\F$-representation of 
$G$ of degree 
greater that $(q^n+1)/2$, then $\dim\tau\geq 
  (q^n-1)(q^n-q)/2(q+1)$ . It follows from Clifford's theorem that 
the same is true for $\phi$, provided $\dim\phi>q^n+1$.  

Now, suppose $\dim\phi\leq q^n+1$. Again by \cite[Theorem 2.1]{GMST}, it follows from Clifford's theorem that
the composition factors of $\phi|_G$ are of degree $(q^n\pm 1)/2$. On the other hand, by the above, $\dim\phi=q^n\pm 
1$. 
Taking into account that, if $\ch \F=2$, there are no irreducible representation of $G$ of degree $(q^n+1)/2$, item (3) 
follows. 
 \end{proof}

\begin{lemma}\label{4ss}
Let $L=\PSp_{2n}(q)$, where $n\geq 2$ and $q$ is odd.
Let $1\neq g \in \Aut L$ be a $2$-element such that $g^2\neq 1$, and let $G=\langle g,L\rangle$.
Suppose that $(2n,q) \neq (4,3)$.
Then $g$ is not almost cyclic in any cross-characteristic projective irreducible representation  $\phi$ of $G$ that is 
non-trivial on $L$.
\end{lemma}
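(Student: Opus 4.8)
The plan is to run the same machinery as in Lemmas \ref{newp2p} and \ref{nn5u}. Assuming $\phi(g)$ is almost cyclic, Lemma \ref{uu3} forces $\dim\phi \leq \alpha(g)(|g|-1)$, so it suffices to prove $\alpha(g)(|g|-1) < \dim\phi$ for a suitable lower bound on $\dim\phi$. Since we are in Case (B), every constituent of $\phi|_L$ is a Weil representation, and the basic lower bound is $\dim\phi \geq (q^n-1)/2$ by Lemma \ref{md6}(4). I would split into three cases according to the position of $g$ relative to $A_d = \mathrm{CSp}_{2n}(q)/Z$. If $g \in L$, then $G = L$ and $\phi$ is a Weil representation, so the claim is already contained in \cite{DMZ12}. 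If $g \in A_d \setminus L$, then $G = A_d$, and here the symplectic case genuinely differs from the unitary one: the Weil representations of $\Sp_{2n}(q)$ do \emph{not} extend to the conformal group, so Lemma \ref{32s}(3) upgrades the lower bound to $\dim\phi \geq q^n - 1$. (This is exactly why Lemma \ref{32s} was proved just before.) If $g \notin A_d$, then $g$ involves a nontrivial field automorphism of $2$-power order, so $|\Phi|_2 > 1$ and $q = q_0^{2^m}$ with $m \geq 1$; in particular $q \geq 9$, which is what makes the arithmetic go through.

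For the order of $g$ I would bound $|g| \leq \eta_2(\Aut L)$. Writing $\Aut L = A_d \cdot \Phi$ and lifting via $\mathrm{CSp}_{2n}(q) \leq \GL_{2n}(q)$, the group $\Aut L$ is a quotient of a subgroup of $\GL_{2n}(q)\cdot \Phi$, so Lemma \ref{za4bis} together with Lemma \ref{nn9} yields $\eta_2(\Aut L) \leq \eta_2(\GL_{2n}(q)) = 2^s|q \mp 1|_2$, where $2^s \leq 2n < 2^{s+1}$; crudely $|g| \leq 2n\,|q\mp 1|_2 \leq 2n(q+1)$. For $\alpha(g)$ I would use $\alpha(g) \leq 2n$ from Proposition \ref{GS1}(1) whenever $n \geq 3$ (so the natural module has dimension $2n \geq 6 \geq 5$; the transvection exception there is for $q$ even and does not arise). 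Combining these, for $n \geq 3$ the claim reduces to the elementary inequality $8n^2(q+1) < (q^n-1)/2$ (or $< q^n - 1$ in the case $g \in A_d \setminus L$), which holds for all but a short list of small pairs $(n,q)$.

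The genuinely delicate points are twofold, and this is where I expect the main effort. First, $\PSp_4(q)$ falls outside the range of Proposition \ref{GS1}(1), and since $|g| \geq 4$ the element $g$ is not of prime order, so Proposition \ref{GS2}(6) does not apply directly; here I would either show that $g$ is regular semisimple (whence $\alpha(g) \leq 3$ by Lemma \ref{g12}(2)) or, when it is not, treat the finitely many relevant $q$ individually. Second, the small pairs left over by the inequality above — essentially $\PSp_4(q)$ for small odd $q \geq 5$, together with a few cases such as $\PSp_6(3)$ and $\PSp_8(3)$ — must be eliminated one at a time: I would record the exact values of $\eta_2(G)$ and of $\alpha(g)$ for each admissible order of $g$ (via \cite{GS} and the GAP package), read off $\dim\phi$ from the Weil degrees $(q^n\pm 1)/2$, and check $\alpha(g)(|g|-1) < \dim\phi$ case by case, exactly as was done for the unitary groups in Lemma \ref{nn5u}. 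This bookkeeping over the small cases, and the treatment of $\alpha(g)$ for non-prime-order elements of $\PSp_4$, is the hard part; the exclusion of $(2n,q) = (4,3)$ is forced precisely because there the analogous inequality fails and genuine almost cyclic $2$-elements do occur (cf. Lemma \ref{news2a}(10)).
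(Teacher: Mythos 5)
Your overall strategy coincides with the paper's: bound $|g|$ by $\eta_2(\Aut L)\leq\eta_2(\GL_{2n}(q)\cdot\Phi)=\eta_2(\GL_{2n}(q))\leq 2^{t+1}(q+1)$ via Lemmas \ref{za4bis} and \ref{nn9}, take $\alpha(g)\leq 2n$ (resp.\ $\leq 5$ for $n=2$), compare with $\dim\phi\geq(q^n-1)/2$ using Lemma \ref{uu3}, and clean up the finite list of surviving pairs $(n,q)$ with exact values of $\eta_2$, $\alpha(g)$ and GAP. The paper also invokes Lemma \ref{32s}(3) exactly as you propose, to raise the lower bound to $q^n-1$ in the leftover instances where $g\notin L$ (for example $|g|=32$ in $\PSp_{12}(3)$, $|g|=8$ in $\PSp_4(5)$, $|g|=16$ in $\PSp_4(7)$). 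Your worry about Proposition \ref{GS2}(6) for elements of non-prime order is unnecessary: if conjugates $h_1,\ldots,h_m$ of a power $h=g^k\neq 1$ generate, one may write $h_i=g_i^k$ for conjugates $g_i$ of $g$, so $\alpha(g)\leq\alpha(g^k)$ and the prime-order bounds transfer; the paper simply cites \ref{GS2}(6) for $n=2$ on this basis.

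The one genuine gap is your treatment of the case $g\in L$. You dispose of it by asserting that $\phi$ is then a Weil representation and that "the claim is already contained in \cite{DMZ12}". But the lemma is stated for all $\phi$ and all $2$-elements $g$ with $g^2\neq 1$, and the Main Theorem explicitly keeps the Weil representations of $\PSp_{2n}(q)$, $q$ odd, with $g$ semisimple (which any $2$-element of $L$ here is) among the surviving exceptions; so whether the classification in \cite{DMZ12} excludes $2$-elements of order at least $4$ is precisely the point that must be verified, and it cannot simply be cited. The paper does not defer this case: its inequalities $2n|g|<(q^n-1)/2$ and $5|g|\leq(q^2-1)/2$ are proved against the minimal Weil degree itself, and for the few surviving pairs the Weil representations are inspected directly (degrees $13,14$ for $\Sp_6(3)$ with $|g|=8$, degree $12$ for $\PSp_4(5)$, and so on), which is what makes the statement hold unconditionally. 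You would need to either carry out that verification or replace the citation by the explicit computation. Apart from this, your residual list of small cases and your explanation of why $(2n,q)=(4,3)$ must be excluded (genuine examples with $|g|=4,8$ occur there, cf.\ items (28)--(31) of Theorem \ref{main2}) are correct.
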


\begin{proof}
Let $G_1=\mathrm{CSp}_{2n}(q)\cdot \Phi$. Then $G_1\subseteq \GL_{2n}(q)\cdot \Phi$.
By Lemmas \ref{za4bis} and \ref{nn9}, we have $\eta_2(\Aut L)\leq \eta_2(G_1)\leq \eta_2(\GL_{2n}(q)\cdot \Phi)
=\eta_2(\GL_{2n}(q)) \leq 2^{t+1}(q+1)$,
where $2^t \leq n < 2^{t+1}$.
For $n>2$, we get that  $\phi(g)$ is not almost cyclic by Lemma \ref{uu3} once we show that $2n|g|< \frac{q^n-1}{2}$.
Now, $2n|g|\leq 2n\cdot 2^{t+1}(q+1)\leq 4n^2(q+1)$ and $4n^2(q+1)< \frac{q^n-1}{2}$ for all $n\geq 3$ and $q$ odd, 
unless $(n,q)\in \{(3,3),(3,5),(3,7), (4,3),(4,5), (5,3), (6,3) \}$.
For $L$ as listed in the Table \ref{megaTable}, by Lemma \ref{uu3}, we conclude that $\phi(g)$ is not almost cyclic.

If $L=\PSp_6(3)$, then $|g|\leq 8$ and $\dim \phi\geq 13$. If $|g|=4$, then $\alpha(g)\leq 3$,  and hence, by Lemma 
\ref{uu3}, $\phi(g)$ is not almost cyclic. 
If $|g|=8$, then $\alpha(g)=2$; hence we only need to examine the representations $\phi$ such that
$13\leq \dim\phi\leq 14$. Using the
GAP package we see that $\phi(g)$ is not almost cyclic.

If $L=\PSp_{12}(3)$, we have $|g|\leq \eta_2(\GL_{12}(3))=32$, $\dim \phi \geq 364$ and $\alpha(g)\leq 12$.
Thus, if $|g|=4,8,16$, we are done by Lemma  \ref{uu3}.
If $|g|=32$, then $g \not \in L$ (since $\eta_2(\PSp_{12}(3))=16$). So, we can refine the bound using Lemma \ref{32s}, 
and again we are done by Lemma \ref{uu3}.

Suppose now that $n=2$ and $q\geq 5$. Then it is enough to show that $5|g|\leq \frac{q^2-1}{2}$ (see Proposition 
\ref{GS2}(6)).
By the above $5|g|\leq 5\cdot 2^2(q+1)$, and $20(q+1)< \frac{q^2-1}{2}$ for all $q> 41$.
Assume $q \leq 41$. Then we may refine the previous bound, obtaining that 
$5|g|\leq 5\eta_2(\GL_{4}(q))\leq \frac{q^2-1}{2}$
unless $q=5,7,9,11,17,31$. In these exceptional  cases we get the following:
$$\begin{array}{c|c|c|c||c|c|c|c}
L &   |g| & \alpha(g) &   \dim \phi\geq  &  L &   |g| & \alpha(g) &   \dim \phi\geq    \\\hline
\PSp_4(5) &  =4 & \leq 3 & 12  & \PSp_4(9)  &  =16 & =2 & 40 \\
\PSp_4(5) &  =8 & =2 & 12  &  \PSp_4(11)  & \leq 8 & \leq 3 & 60 \\
\PSp_4(7) &   \leq 8 & \leq 3 & 24 & \PSp_4(17)  & \leq 32 & \leq 3 & 144  \\ 
 \PSp_4(7) & =16 & =2 & 24  & \PSp_4(31)  & \leq 64 & \leq 5 & 480 \\
\PSp_4(9)  & \leq 8 & \leq 3 & 40     
  \end{array}$$

Hence, by Lemma \ref{uu3}, we are only left to consider the following instances:\\
\noindent $(i)$ $L=\PSp_4(5)$ and $|g|=8$. Then $g\not \in L$, whence $\dim \phi\geq 24$. Thus, by Lemma 
\ref{uu3}, 
$\phi(g)$ is not almost cyclic.\\
\noindent $(ii)$ $L=\PSp_4(7)$ and $|g|=16$.
In this case $g \not \in L$, whence $\dim \phi\geq 48$ (again by Lemma \ref{32s}), which   implies, by Lemma \ref{uu3}, 
that $\phi(g)$ is not almost cyclic.
\end{proof}

\subsubsection{Case $p\mid q$}

In this subsection we examine the case where $p$ divides $q$. 
Although, for $p$ odd, we might refer to the earlier paper \cite{DMZ10}, the approach chosen in the present paper (for 
all $p$) 
appears to be more efficient than in \cite{DMZ10} (and may lead to a shorter proof of the results in  \cite{DMZ10}).

First, let $H = \GL_n(q)$ and let $\Phi$ be the group of field automorphisms of $H$.
Suppose that $t$ and $m$ are such that $p^t < n \leq p^{t+1}$ and $|\Phi|_p=p^m$.
Write $q=q_0^{p^m}$. Observe that $\eta_p(H) =  p^{t+1}$. Indeed, $p^{t+1}$ is exactly the order of a unipotent element 
of $\GL_n(q)$ consisting of single Jordan block of size $n$. Furthermore, note that $\eta_p(H) =  p^{t+1}$ also for 
$H = \SL_n(q), \GU_n(q)$ and $\SU_n(q)$.

Next, we need the following arithmetical lemma.

\begin{lemma}\label{LU}
Let $p$ be a prime and let $q$ be a $p$-power.
Let  $n$ be an integer greater than $4$, and suppose that $t,m$ and
$q_0$ are defined as above.
 If $p=2$, then
$n2^{t+m+2}\leq \frac{q^n-q}{q+1}$
unless  $q=2$ and $5\leq n \leq 9$.
If $p$ is odd, then $np^{t+m+1}\leq \frac{q^n-q}{q+1}$.
Furthermore, $n2^{t+m+2}\leq \frac{q^n-1}{q-1}-2$, unless  $q=2$ and $n=5,6$.
\end{lemma}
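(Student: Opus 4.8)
The three bounds are elementary but carry enough small exceptions that a direct case analysis is unwieldy; the plan is to cut the a priori infinite verification down to finitely many explicit evaluations by two monotonicity reductions. For the \emph{first} reduction, fix $p$ and $m$ and regard $t$ as the function of $n$ determined by $p^t<n\le p^{t+1}$. Each left-hand side depends on $q$ only through $m$, whereas $\frac{q^n-q}{q+1}$ is strictly increasing in $q$. Since $q=q_0^{p^m}\ge p^{p^m}$ with equality exactly when $q_0=p$, it suffices to prove each inequality for the smallest admissible field size $q=p^{p^m}$: for any larger $q$ sharing the same $m$ the right-hand side only grows, so no new exception can appear. In particular, for $p=2$, $m=0$ this worst case is $q=2$, which is precisely where the stated exceptions live, while the other $m=0$ fields $q=8,32,\dots$ present no difficulty.

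For the \emph{second} reduction, observe that inside a single block $p^t<n\le p^{t+1}$ (where $t$ is constant) the left-hand side $n\,p^{t+m+c}$ is linear in $n$, while $\frac{q^n-q}{q+1}$ is exponential in $n$; hence the ratio (LHS)/(RHS) is decreasing, so it is maximal at the block's left endpoint $n=p^t+1$. Consequently, once the inequality holds at some point of a block it holds throughout the rest of that block, and in order to locate all exceptions it is enough to inspect an initial segment of $n$ together with the endpoints $n=p^t+1$ for $t\ge1$. As $t$ and $m$ grow the right-hand side grows doubly exponentially (in $t$ through $q^{p^t}$ and in $m$ through $q=p^{p^m}$), while the left-hand side grows only like $p^{2t}$ and $p^{m}$; comparing exponents, $2t+m+1$ against $p^{m+t}$, shows the inequality is automatic beyond a small threshold, leaving only finitely many pairs $(m,t)$ to test by hand.

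Carrying out these finite checks isolates the exceptions. For $p=2$ and $q=2$ one verifies $n\,2^{t+2}\le\frac{2^n-2}{3}$ directly for $5\le n\le 10$: it fails for $5\le n\le 9$ and first holds at $n=10$; the block monotonicity then forwards it through the remainder of the block $t=3$, and the endpoints $n=2^t+1$ for $t\ge4$ hold by the growth estimate, giving the stated exceptions $q=2$, $5\le n\le 9$. For $p=2$, $m\ge1$ (so $q\ge4$) the endpoint $n=5$ already holds ($5\cdot2^{m+4}\le\frac{q^5-q}{q+1}$), and since each block-endpoint value persists by the decreasing-ratio argument, there are no exceptions; the same scheme with $q\ge p\ge3$ disposes of odd $p$ without exceptions. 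Here one may alternatively run a one-step induction on $n$ for $q\ge4$, using $\frac{q^{n+1}-q}{q+1}>q\cdot\frac{q^n-q}{q+1}$ together with the step ratio $\frac{n+1}{n}\,2^{\,t(n+1)-t(n)}\le 4\le q$.

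Finally, the \emph{third} inequality is a corollary of the first. A one-line cross-multiplication gives $\frac{q^n-1}{q-1}-2-\frac{q^n-q}{q+1}=\frac{2q^n-q^2-2q+1}{q^2-1}>0$ for all $q\ge2$, $n\ge2$, so $\frac{q^n-1}{q-1}-2\ge\frac{q^n-q}{q+1}$; thus wherever the first bound holds so does the third, and only the four leftover cases $q=2$, $5\le n\le 9$ need separate inspection. There $n\,2^{t+2}\le 2^n-3$ fails exactly for $n=5,6$, yielding precisely the stated exceptions. The one genuinely delicate point throughout is the borderline small-field regime ($q=2$, and $q=p$ for odd $p$), where the two sides grow at comparable rates and the crude estimates $p^t<n$ and $q\ge p^{p^m}$ are too lossy; there I keep the exact values of $t$ and $m$ and rely on the block-endpoint monotonicity rather than on any single closed-form bound.
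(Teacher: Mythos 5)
Your argument is correct and is essentially the paper's own proof in a slightly more explicit form: the paper likewise bounds the left-hand side above by $p^{2t+m+3}$ and the right-hand side below by $q^{n-2}\geq p^{p^m(p^t-1)}$ (which is exactly your double reduction to the worst field $q=p^{p^m}$ and to the block endpoint $n=p^t+1$), reduces everything to the exponent inequality $2t+m+3\leq p^m(p^t-1)$, and settles the finitely many residual pairs $(t,m)$ by direct computation. Your additional devices --- the decreasing ratio within a block and the derivation of the third inequality from the first via the identity with numerator $2q^n-q^2-2q+1$ --- merely make explicit the ``direct computations'' that the paper leaves to the reader.
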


\begin{proof}
Since $p^{t}< n \leq p^{t+1}$, we have  $np^{t+m+2}\leq p^{2t+m+3}$ and
$\frac{q^n-q}{q+1}\geq q^{n-2}=q_0^{p^m(n-2)}\geq p^{p^m(p^{t}-1)}$. Hence, it suffices to prove that
$p^{2t+m+3}\leq p^{p^{m}(p^t-1)}$, which is equivalent to prove that $2t+m+3\leq p^{m}(p^t-1)$.
Notice that $2t+m+3\leq 2^m(2^t-1)$ holds unless one of the following cases occurs:
$(i)$ $t=0$; $(ii)$ $t=1$ and $m=0,1,2$; $(iii)$ $t=2$ and $m=0,1$; $(iv)$ $t=3$ and $m=0$. 
Direct computations in each case yield the statement.
\end{proof}

\begin{lemma}\label{LU2}
Let $L\in\{\PSL_n(q), \PSU_n(q)\}$, where $n\geq 5$.
Let $g\in \Aut L$ be a $p$-element such that $g^2\neq 1$ and
let $G=\langle g,L\rangle$.
Then $\phi(g)$ is almost cyclic for some  cross-characteristic projective irreducible representation $\phi$ of $G$
that is non-trivial on $L$ if and only if $G=\PSU_5(2).2$, $\dim \phi=10$ and $|g|=16$.
\end{lemma}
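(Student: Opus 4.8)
The plan is to bound $|g|$, $\alpha(g)$ and $\dim\phi$ and then apply Lemma \ref{uu3} to force all but a tiny number of possibilities to collapse. Since $p\mid q$, by the discussion preceding Lemma \ref{LU} we have $\eta_p(\GL_n(q))=\eta_p(\SL_n(q))=\eta_p(\GU_n(q))=\eta_p(\SU_n(q))=p^{t+1}$, where $p^t<n\leq p^{t+1}$. The group $\Aut L$ is generated by $A_d$ together with field automorphisms $\Phi$ (of $p$-part $p^m$) and, in the linear case, the graph automorphism of order $2$. By Lemma \ref{za4}(2) the field automorphisms multiply the exponent by $p^m$, so $\eta_p(A_d\cdot\Phi)\leq p^{t+m+1}$; in the linear case the extra graph automorphism of order $2$ can at most double this when $p=2$, giving $\eta_p(\Aut L)\leq p^{t+m+2}$ for $p=2$ and $\leq p^{t+m+1}$ for $p$ odd. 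Hence $|g|\leq p^{t+m+2}$ (resp. $p^{t+m+1}$).

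First I would handle the generic case via Lemma \ref{LU}. By Propositions \ref{GS1} and \ref{GS2}, $\alpha(g)\leq n$ for $n\geq 5$ (the symplectic exception does not arise here, and the small-rank exceptions involve involutions, excluded since $g^2\neq 1$). Lemma \ref{md6} gives $\dim\phi\geq\frac{q^n-1}{q-1}-2$ in the linear case and $\dim\phi\geq\frac{q^n-q}{q+1}$ in the unitary case (for $n$ odd; the even case is analogous and even larger). Lemma \ref{LU} then says precisely that $n\cdot\eta_p(\Aut L)$ is strictly below these lower bounds, i.e. $\alpha(g)(|g|-1)<\dim\phi$, so by Lemma \ref{uu3} $\phi(g)$ is not almost cyclic — \emph{except} in the explicitly listed arithmetic exceptions $q=2$ with $5\leq n\leq 9$ (for the unitary-type bound) and $q=2$, $n=5,6$ (for the linear-type bound).

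Next I would dispose of these finitely many exceptional groups one at a time. For each of $\PSL_5(2)$, $\PSL_6(2)$ and $\PSU_n(2)$ with $5\leq n\leq 9$, I would look up the exact value of $\eta_p(\Aut L)=\eta_2(\Aut L)$ and the possible orders $|g|$ of a $2$-element with $g^2\neq 1$, together with the sharp lower bound for $\dim\phi$ from Lemma \ref{md6} and the sharp value of $\alpha(g)$. In most of these the refined numerical bound $\alpha(g)(|g|-1)<\dim\phi$ still holds and rules out almost cyclicity immediately; the delicate survivor is $\PSU_5(2)$, where $\eta_2(\Aut L)=16$ is large relative to the small degree $\dim\phi\geq 4$. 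Here I would consult \cite{Atl,MAtl} and the GAP package to examine directly the representations of $G=\PSU_5(2).2$ containing an element $g$ of order $16$, confirming that $\phi(g)$ is almost cyclic exactly when $\dim\phi=10$, and that no other group, degree, or element order in the exceptional list yields an almost cyclic image.

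The main obstacle is the case $\PSU_5(2).2$ with $|g|=16$: because $16$ is only just below $\dim\phi$, the crude Lemma \ref{uu3} estimate cannot by itself decide almost cyclicity, and a genuine case-by-case computation on the small-degree representations is unavoidable. The rest of the argument is essentially the bookkeeping of reading off $\eta_2$, $\alpha(g)$ and the minimal degree for the remaining $q=2$ groups and checking the sharpened inequality in each, which is routine once Lemma \ref{LU} has reduced the problem to this short finite list.
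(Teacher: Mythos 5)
Your overall strategy is the one the paper follows: establish $|g|\le p^{t+m+1}$ (resp.\ $2^{t+m+2}$), invoke Lemma \ref{LU} together with Lemma \ref{uu3} to kill everything except the $q=2$ groups $\PSL_5(2),\PSL_6(2)$ and $\PSU_n(2)$, $5\le n\le 9$, and then treat that finite list by refined bounds on $\alpha(g)$ and by explicit computation, isolating $\PSU_5(2).2$ with $|g|=16$, $\dim\phi=10$. (Two small slips: for $\PSU_5(2)$ the minimal degree from Lemma \ref{md6}(3) is $(2^5-2)/3=10$, not $4$; and $\PSL_5(2)$ with $|g|=16$ and $\PSU_6(2)$ with $|g|=16$ also survive the numerical bound --- $2\cdot 15=30$ against minimal degrees $29$ and $21$ --- so they too require an explicit check of the representations in a short degree window, not just ``reading off'' the inequality.)

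The genuine gap is $\PSU_9(2)$ with $|g|=32$. Here $t=3$, $m=0$, so your bound allows $|g|=32$; moreover $\alpha(g)\le 9$ and $\dim\phi\ge (2^9-2)/3=170$, so the ``sharpened inequality'' you rely on reads $9\cdot 31=279<170$, which is false. Thus neither Lemma \ref{LU} nor the refined numerical bound disposes of this case, and a direct computation with the modular representations of $\PSU_9(2).2$ in degrees up to $279$ is not realistically available. The paper closes this case by a different idea: pass to $h=g^{16}\in L$, which generates $L$ with $9$ conjugates, and observe that if $\phi(g)$ were almost cyclic then, $\phi(g)$ being diagonalizable with eigenvalues among $32$nd roots of unity times a fixed scalar and the cyclic summand having pairwise distinct eigenvalues, one of the two eigenvalues of $\phi(h)$ would have multiplicity at most $16$; Lemma \ref{uu3} applied to $h$ then gives $\dim\phi\le 9\cdot 16=144<170$, a contradiction. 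Without this (or some equivalent) extra argument your proof does not eliminate $\PSU_9(2)$.
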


\begin{proof}
Recall (see Lemma \ref{md6}) that $\dim \phi \geq \frac{q^n-1}{q-1}-2$ if $L=\PSL_n(q)$, whereas $\dim\phi\geq 
\frac{q^n-q}{q+1}$ if $L=\PSU_n(q)$ and $n$ is odd, and $\dim\phi\geq \frac{q^n-1}{q+1}$ if $L=\PSU_n(q)$ and $n$ is 
even. Also, notice that $\frac{q^n-q}{q+1}\leq \frac{q^n-1}{q+1}\leq \frac{q^n-1}{q-1}-2$.

If $p\geq 3$, then $\eta_p(G)\leq p^{t+m+1}$, where $t$ and $m$ are defined as above. By Lemma \ref{LU} we get 
$n|g|\leq 
np^{t+m+1} \leq \dim \phi$. Thus $\phi(g)$ is not almost cyclic, by Lemma \ref{uu3}. If $p=2$, then $\eta_2(G)\leq 
2^{t+1}\cdot 2^{t+m+2}$.
Likewise, we may apply Lemma \ref{LU} obtaining that $\phi(g)$ is not almost cyclic, unless $L$ is one of the following 
groups:
$\PSL_5(2)$, $\PSU_5(2)$,
$\PSL_6(2)$, $\PSU_6(2)$, $\PSU_7(2)$, $\PSU_8(2)$ and $\PSU_9(2)$. Thus, we need to examine these groups in detail. 
Moreover, in order to bound efficiently $\alpha(g)$ we make use of the package GAP whenever necessary.

1) $L=\PSL_5(2)$. In this case  $|g|\leq 16$ and $\dim \phi \geq 29$.
If $|g|=4$, then $\alpha(g)\leq 3$; if $|g|=8$, then $\alpha(g)=2$. Thus, if $|g| \leq 8$, $\phi(g)$ is not almost 
cyclic, by Lemma  \ref{uu3}.
If $|g|=16$, then $\alpha(g)=2$. This means that we have to examine the representations $\phi$ such
that $29 \leq \dim \phi\leq 30$. Using the GAP package, again we see that $\phi(g)$ is not almost cyclic.

2) $L=\PSU_5(2)$. In this case  $|g|\leq 16$ and $\dim \phi \geq 10$.
If $|g|=4$, then $\alpha(g)\leq 3$. Thus $\phi(g)$ is not almost cyclic by Lemma  \ref{uu3}. 
If $|g|=8$, then $\alpha(g)=2$, and hence we need to examine the representations $\phi$ such that $10\leq \dim \phi 
\leq 
14$. If $|g|=16$, then $\alpha(g)=2$ and we need to examine the representations $\phi$ such
that $10 \leq \dim \phi\leq 30$. Using the GAP package, we see that $\phi(g)$ is almost cyclic if and only if 
$G=\PSU_5(2).2$, $\dim \phi=10$ and $|g|=16$, as claimed.

3) $L=\PSL_6(2)$. In this case  $|g|\leq 16$, $\alpha(g)\leq 3$ and $\dim \phi \geq 61$. Thus, by Lemma  \ref{uu3}, 
we conclude that $\phi(g)$ is not almost cyclic.

4) $L=\PSU_6(2)$. In this case  $|g|\leq 16$ and $\dim \phi \geq 21$.
If $|g|=8$ or $16$, then $\alpha(g)=2$, whereas if $|g|=4$, then $\alpha(g)\leq 3$.
Thus, if $|g|=4$ or $8$, by Lemma \ref{uu3} $\phi(g)$ is not almost cyclic.
If $|g|=16$ ($g \not \in L$), we need to examine the representations $\phi$ such that $21 \leq \dim \phi\leq 30$.
Using the GAP package, we find that $\phi(g)$ is never almost cyclic.

5) $L=\PSU_7(2)$. In this case  $|g|\leq 16$ and $\dim \phi \geq 42$. If $|g|=4$, then $\alpha(g)\leq 4$, whereas, if 
$|g|=8$ or $16$, then $\alpha(g)=2$. In both cases, by Lemma \ref{uu3}, $\phi(g)$ is not almost cyclic.

6) $L=\PSU_8(2)$. In this case  $|g|\leq16$, $\alpha(g)\leq 4$ and $\dim\phi \geq 85$.
By Lemma \ref{uu3}, $\phi(g)$ is not almost cyclic.

7) $L=\PSU_9(2)$. In this case  $|g|\leq 32$ and $\dim \phi
\geq 170$. If $|g| \leq 16$, then $\phi(g)$ is not almost cyclic by Lemma \ref{uu3}. If $|g|=32$, then we may consider 
$g^{16}$. The latter belongs to $L$, and $L$ is generated by $9$ conjugates of $g^{16}$. By Lemma \ref{uu3},  $\dim 
\phi 
\leq 9d$, where $d$ equals the dimension of an eigenspace of $\phi(g^{16})$. Suppose that $\phi(g)$ is almost cyclic. 
Then it readily follows that either $1$ or $-1$ appears as an eigenvalue of $\phi(g^{16})$ with multiplicity $\leq 16$. 
Thus we are led to conclude that $\dim \phi \leq 9d \leq 9\cdot16 < 170$, a contradiction.
\end{proof}

\begin{lemma}\label{SL34}
Let $L\in \{\PSL_3(q),\PSL_4(q)\}$, with  $L \neq \PSL_3(2),$ $\PSL_3(4), \PSL_4(2)$.
Let $g\in \Aut L$ be a $p$-element such that $g^2\neq 1$ and
let $G=\langle g,L\rangle$.
Then $g$ is not almost cyclic in any cross-characteristic projective irreducible representation $\phi$ of $G$
that is non-trivial on $L$.
\end{lemma}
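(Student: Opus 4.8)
The plan is to follow the scheme of the previous lemmas in this subsection: assuming $\phi(g)$ almost cyclic, Lemma \ref{uu3} forces $\dim\phi\leq\alpha(g)(|g|-1)$, and I would show this contradicts the lower bound $\dim\phi\geq\frac{q^n-1}{q-1}-2$ of Lemma \ref{md6}(2) (with $n=3,4$) for all but a couple of small groups. The decisive structural feature here is that $p\mid q$ forces $p$ to be the defining prime $r$, so every $p$-element is unipotent up to field and graph automorphisms; in particular the diagonal automorphisms, whose number divides $(n,q-1)$, are coprime to $p$ and so do not enlarge a Sylow $p$-subgroup.

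The first step is to bound $|g|\leq\eta_p(\Aut L)$. Writing $q=q_0^{p^m}$ with $p^m=|\Phi|_p$ and $p^t<n\leq p^{t+1}$, the unipotent exponent of $\GL_n(q)$ equals $p^{t+1}$ (as recalled at the start of this subsection), the field automorphisms multiply the $p$-exponent by $p^m$ (Lemma \ref{za4}(2)), and for $p=2$ the graph automorphism contributes at most one further factor $2$. This gives $|g|\leq p^{m+t+1}$ for $p$ odd and $|g|\leq 2^{m+t+2}$ for $p=2$; explicitly $t=0$ for $\PSL_3(q)$ with $p=3$ or for either group with $p\geq5$, while $t=1$ for $\PSL_3(q)$ with $p=2$ and for $\PSL_4(q)$ with $p\in\{2,3\}$.

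The second step is to bound $\alpha(g)$. For $g$ of prime order this is immediate from Proposition \ref{GS2}: $\alpha(g)\leq3$ for $\PSL_3(q)$ and $\alpha(g)\leq4$ for $\PSL_4(q)$, the larger values $4$ and $6$ arising only for involutory graph(-field) automorphisms. For $g$ of higher $p$-power order I would pass to a power of prime order: when $g$ is inner (unipotent) one has $\langle g^k,L\rangle=L=\langle g,L\rangle$ and hence $\alpha(g)\leq\alpha(g^k)$, and the same reduction applies when $g$ is a pure field/graph automorphism, so in both cases the bounds $\alpha(g)\leq4$ ($\PSL_3$), $\alpha(g)\leq6$ ($\PSL_4$) survive. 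Feeding these three estimates into Lemma \ref{uu3}, the inequality $\alpha(g)(|g|-1)<\frac{q^n-1}{q-1}-2$ holds comfortably once $m\geq1$ or $q_0$ is not minimal, since $q^{n-1}=q_0^{p^m(n-1)}$ grows doubly exponentially in $m$ while the left-hand side grows only like $p^m$.

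A direct check of the finitely many residual cases then leaves only $\PSL_4(3)$ (where $|g|=9$ and $\dim\phi\geq26$) and $\PSL_4(4)$ (where $|g|=16$) to inspect, the three groups $\PSL_3(2),\PSL_3(4),\PSL_4(2)$ excluded from the statement being exactly those for which the bound of Lemma \ref{md6}(2) drops to $2,4,7$. I would finish these two by explicit computation with GAP, examining the few representations whose degree lies below the crude upper bound, or, for the $2$-power case, by the sharper eigenspace argument of Lemma \ref{LU2} applied to the involutory power $g^{|g|/2}$. The main obstacle is precisely the control of $\alpha(g)$ for composite-order $g$: Proposition \ref{GS2} covers only prime-order elements and Proposition \ref{GS1}(1) requires $n\geq5$, so the pass-to-a-power reduction fails for ``mixed'' elements carrying both a nontrivial unipotent and a nontrivial field part; fortunately such elements occur only for large $q$, where the enormous slack in the dimension inequality absorbs any crude bound from \cite{GS}, and for the single borderline group $\PSL_4(4)$, which is settled computationally.
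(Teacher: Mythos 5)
Your proposal is correct and follows the paper's proof essentially step for step: the same bound $|g|\le\eta_p(\Aut L)\le p^{t+m+1}$ (with one extra factor $2$ for $p=2$ from the graph automorphism), the same Guralnick--Saxl bounds $\alpha(g)\le 3,4$ resp.\ $4,6$, the same comparison of Lemma \ref{uu3} against Lemma \ref{md6}(2), and the same residual groups $\PSL_4(3)$ and $\PSL_4(4)$. The only differences are cosmetic: the paper disposes of those two groups by the sharper values $\alpha(g)=2$ for $|g|=9$ in $\PSL_4(3)$ and $|g|\le 8$, $\alpha(g)\le 3$ in $\PSL_4(4)$ (so no representations need be inspected), and your pass-to-a-power bound $\alpha(g)\le\alpha(g^k)$ --- which in fact holds for arbitrary $g$ with $g^k\ne 1$, not just inner or pure outer elements, since $L$-conjugates of $g$ powering onto generators of $\langle L,g^k\rangle$ already generate $\langle L,g\rangle$ --- makes explicit a point the paper leaves tacit.
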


\begin{proof}
Let $L=\PSL_3(q)$. If $q = 3$, then $|g|=3$, $\alpha(g)\leq 3$ and $\dim \phi \geq 11$, whence 
$11\leq 3(3-1)=6$. Thus $\phi(g)$ cannot be almost cyclic. Now, assume that $q\geq 5$.

Let $t$ and $m$ be defined as above and suppose that $p>2$. Then $n = 3$ implies $t = 0$,
whence $\eta_p(G)\leq p^{m+1}$.
Therefore $3\eta_p(G)\leq 3p^{m+1}\leq p^{m+2}$, whilst $\dim \phi \geq q^2+q-1\geq q^2\geq p^{2p^m}$. Thus it is 
enough 
to show
that $m+2 \leq 2 \cdot 3^m\leq  2p^m$. This clearly holds  for all $m\geq 0$.
If $p=2$, then $\eta_2(G)\leq 2^{m+3}$. Hence (see Proposition \ref{GS2}) we need to show that
$4\eta_2(G)\leq 2^{m+5}\leq 2^{2^{m+1}}\leq q^2+q-1$.
Now, $m+5\leq 2^{m+1}$ provided $m > 1$.
For $m=0,1$, we get the inequality $4\cdot2^{m+3} \leq 64\leq q^2+q-1$, which holds for any even $q>4$.

Next, let $L=\PSL_4(q)$. 
If $q=3$, then $|g|\leq 9$ and $\dim \phi \geq 26$.
If $|g|=3$, then $\alpha(g)\leq 4$, whereas, if $|g|=9$, then $\alpha(g)=2$. In both cases, $\phi(g)$ cannot be almost 
cyclic, by Lemma \ref{uu3}. Now, assume that $q\geq 5$.

If $p>3$, then $\eta_p(G)\leq p^{m+1}$.
Hence $4\eta_p(G)\leq 4 p^{m+1}\leq p^{m+2}\leq p^{3p^m}\leq q^3+q^2+q-1\leq \dim \phi$, and we are done.
If $p=3$, then  $\eta_3(G)\leq 3^{m+2}$, and hence $4\eta_3(G)\leq 4\cdot 3^{m+2}\leq 3^{m+4}\leq 3^{3^{m+1}}\leq
\dim \phi$ for all $m\geq 1$. If $m=0$ (and $q>3$), we still have  $4\eta_3(G)\leq 36 \leq q^3\leq \dim \phi$.
Finally, let  $p=2$. Then $\eta_2(G)\leq 2^{m+3}$. By Proposition \ref{GS2} $\alpha(g)\leq 6$, whence $6\eta_2(G)\leq 
2^{m+6}\leq 2^{3\cdot 2^m} \leq \dim \phi$ for all $m\geq 2$. Furthermore, if $m=0,1$ (and $q>4$), then  
$6\eta_2(G)\leq 
96\leq 8^3\leq \dim \phi$.

Finally, let  $L=\PSL_4(4)$. In this case $|g|=4,8$, $\alpha(g)\leq 3$ and 
$\dim 
\phi\geq 83$, and hence $\phi(g)$ cannot be almost cyclic, by Lemma \ref{uu3}.
\end{proof}

\begin{lemma}\label{u34}
Let $L\in \{\PSU_3(q), q>4, \PSU_4(q),q>3\}$.
Let $g\in \Aut L$ be a $p$-element such that $g^2\neq 1$ and
let $G=\langle g,L\rangle$.
Then $g$ is not almost cyclic in any cross-characteristic projective irreducible representation $\phi$ of $G$
that is non-trivial on $L$.
\end{lemma}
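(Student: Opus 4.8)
The plan is to run exactly the dimension-versus-order comparison of Lemma \ref{SL34}, via Lemma \ref{uu3}: if $\phi(g)$ were almost cyclic then $\dim\phi\le\alpha(g)(|g|-1)$, so it suffices to show the right-hand side stays strictly below the lower bound for $\dim\phi$ from Lemma \ref{md6}, namely $\frac{q^3-q}{q+1}=q^2-q$ when $L=\PSU_3(q)$ and $\frac{q^4-1}{q+1}=q^3-q^2+q-1$ when $L=\PSU_4(q)$. Since we are in the case $p\mid q$, the crucial simplification is that the diagonal outer part is harmless: $A_d/L$ has order $(q+1,n)$, which is coprime to $p$ because $p\mid q$ forces $p\nmid q+1$. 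Hence every outer $p$-element is a field automorphism, and $\eta_p(\Aut L)$ is governed by the unipotent exponent times the $p$-part of the field-automorphism group, with $|g|\le\eta_p(\Aut L)$ uniformly in both the inner and outer cases.

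First I would record the Sylow-exponent bounds. Writing $q=q_0^{p^m}$ with $p^m=|\Phi|_p$, and using the fact (noted just before Lemma \ref{LU}) that $\eta_p(\GU_n(q))=p^{t+1}$ where $p^t<n\le p^{t+1}$, together with Lemma \ref{za4} for the field-automorphism factor, I obtain: for $p>3$, $\eta_p(\Aut L)\le p^{m+1}$ in both families (here $t=0$); for $p=3$, $\eta_3(\Aut L)\le 3^{m+1}$ for $\PSU_3$ and $\le 3^{m+2}$ for $\PSU_4$; and for $p=2$, $\eta_2(\Aut L)\le 2^{m+3}$ for both. The $p=2$ count uses that $\eta_2(\GU_n(q))=2^2$ for $n=3,4$ (a single Jordan block of size $3$ or $4$ has order $4$) and that $\Aut L/A_d\cong\Phi_2$ contributes a $2$-part $2^{m+1}$, with no separate graph factor present in the unitary case. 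For $\alpha(g)$ I would quote Proposition \ref{GS2}(3),(5): $\alpha(g)\le 3$ for $\PSU_3(q)$ and $\alpha(g)\le 4$ for $\PSU_4(q)$ (the value $6$ being needed only for involutory graph automorphisms, which do not arise here since $g^2\ne1$).

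Next I would carry out the elementary comparisons case by case. For $\PSU_3(q)$ every case gives $3\,\eta_p(\Aut L)\le q^2-q$: for $p>3$ we have $q\ge5$, for $p=3$ we have $q\ge9$, and for $p=2$ we have $q\ge8$, since $\PSU_3(2),\PSU_3(3),\PSU_3(4)$ are all excluded by the hypothesis $q>4$. Thus Lemma \ref{uu3} already forbids almost cyclicity with no residual cases. For $\PSU_4(q)$ the same computation yields $\alpha(g)\,\eta_p(\Aut L)<q^3-q^2+q-1$ for all admissible $q$ when $p$ is odd, and also for $p=2$ whenever $q\ge8$. The single survivor of the crude estimate is $q=4$, where $\eta_2(\Aut L)\le 2^{m+3}=2^{4}=16$ (with $q_0=2$, $m=1$) and the product $\alpha(g)\,\eta_2(\Aut L)$ overshoots the bound $\dim\phi\ge\frac{4^4-1}{5}=51$.

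The one genuine obstacle is therefore $\PSU_4(4)$, and I expect to clear it as elsewhere in the section, by observing that the crude estimate is wasteful precisely because its two extreme inputs cannot occur simultaneously: here $\Aut L=\PSU_4(4).4$, the $2$-elements with $g^2\ne1$ have order $4,8$ or $16$, and the large orders force $g$ to be outer (of field-automorphism type), where $\alpha(g)$ drops to a small value. Using the conjugacy-class data from the GAP package to pin down $\alpha(g)$ and $|g|$ for each relevant class, I would check that $\alpha(g)(|g|-1)<51$ in every case, so that $\phi(g)$ is never almost cyclic. Combining the two families then completes the proof.
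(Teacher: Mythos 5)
Your proposal follows essentially the same route as the paper: bound $\eta_p(\Aut L)$ by the order $p^{t+1}$ of a single Jordan block times the $p$-part $|\Phi_2|_p$ of the field-automorphism group, compare $\alpha(g)\,\eta_p(\Aut L)$ against the lower bounds $q^2-q$ and $q^3-q^2+q-1$ via Lemma \ref{uu3}, and dispose of the single survivor $\PSU_4(4)$ by explicit data; the paper does exactly this (and settles $\PSU_4(4)$ by computing $\alpha(g)=2$, $\eta_2(G)=16$, $\dim\phi\geq 51$).

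One caveat: your justification of $\alpha(g)\leq 4$ for $\PSU_4(q)$ is not quite right. Proposition \ref{GS2}(5) applies only to elements of \emph{prime} order, so for a $2$-element $g$ of order $\geq 4$ one must pass to the involution $g^{|g|/2}$ (using $\alpha(g)\leq\alpha(g^{j})$ for $g^j\neq 1$), and that involution may well be an involutory graph automorphism even though $g$ itself is not an involution; hence only $\alpha(g)\leq 6$ is guaranteed, which is the constant the paper uses. This is harmless here, since all of your inequalities still hold with $6$ in place of $4$ (e.g.\ $6\cdot 2^{m+3}\leq q^3-q^2+q-1$ for $q\geq 8$, and $\PSU_4(4)$ remains the only residual case), but the bound as you stated it is not justified.
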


\begin{proof}
Let $L=\PSU_3(q)$. If $p>2$, then  $\eta_p(G)\leq p^{m+1}$.
Since $3\eta_p(G)\leq 3p^{m+1}$ and $\dim \phi \geq q^2-q\geq 3q\geq 3 p^{p^m}$, it suffices to show
that $m+1\leq p^m$, which clearly holds for all $m\geq 0$.
If $p=2$, then $\eta_2(G)\leq 2^{m+3}$. Hence, it will be enough to show that
$4\eta_2(G)\leq 2^{m+5}\leq 2^{1+2^m}\leq  2q\leq q^2-q$.
Now, $m+4\leq 2^{m}$ provided $m\geq 3$.
If $0\leq m \leq 2$, then $4\eta_2(G)\leq 128\leq q^2-q$ for all $q\geq 16$.
If $q=8$, then $4\eta_2(G)=32\leq \dim\phi$. Therefore, we are done.

Now, let $L=\PSU_4(q)$. If $p>3$, then $\eta_p(G)\leq p^{m+1}$.
Whence $4\eta_p(G)\leq 4 p^{m+1} \leq 4 p^{2p^m}\leq 4q^2\leq q^3-q^2+q-1
 \leq \dim \phi$ (note that $m+1 \leq 2p^m$ for all $m\geq 0$).
If $p=3$, then  $\eta_3(G)\leq 3^{m+2}$. Hence, $4\eta_3(G)\leq 4\cdot 3^{m+2}\leq 4\cdot 3^{2\cdot 3^{m}}\leq
\dim \phi$ for all $m\geq 0$.
If $p=2$, then $\eta_2(G)\leq 2^{m+3}$. It follows that $6\eta_2(G)\leq 6\cdot 2^{m+3}
\leq 6\cdot 2^{2^{m+1}}\leq 6q^2\leq \dim \phi$
holds for all $m\geq 1$. If $m=0$ (and $q>4$), then $6\eta_2(G)\leq 48\leq q^3-q^2+q-1$.

Finally, let $L=\PSU_4(4)$. In this case $\eta_2(G)=16$, $\alpha(g)=2$  and $\dim \phi \geq 51$; hence $\phi(g)$ 
cannot be almost cyclic, by Lemma \ref{uu3}.
\end{proof}

\begin{lemma}\label{SP43}
Let $L=\PSp_{2n}(q)$, where $n\geq 2$ and $q$ is odd. 
Let $1\neq g\in \Aut L$ be a $p$-element and
let $G=\langle g,L\rangle$.
Let $\phi$ be a cross-characteristic projective irreducible representation  $G$
that is non-trivial on $L$. Then $\phi(g)$ is almost cyclic if and only if $L=\PSp_4(3)$ and the following occurs:
\begin{itemize}
 \item[(1)] $\ell\neq 2,3$: $g=3a,3b$ and $\dim \phi=4$; $g=3c$ and $\dim \phi=6$;
$g=3d$ and $\dim \phi=4,5$; $|g|=9$ and $\dim \phi=4,5,6$;
\item[(2)] $\ell=2$: $g=3a,3b$ and $\dim \phi=4$; $g=3c$ and $\dim \phi=6$; $g=3d$ and $\dim \phi=4$;
$|g|=9$ and $\dim =4,6$.
\end{itemize}
\end{lemma}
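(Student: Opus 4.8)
The plan is to bound $\dim\phi$ from below, $|g|$ and $\alpha(g)$ from above, and derive a contradiction through Lemma \ref{uu3}, exactly as in Lemmas \ref{LU2}, \ref{SL34} and \ref{u34}. Since $q$ is odd and $p\mid q$, the prime $p$ is odd and is the defining characteristic of $L$; as $[A_d:L]=2$ is coprime to $p$, every $p$-element of $A_d$ lies in $L$, so the $p$-part of $\Aut L$ comes only from $L$ and from field automorphisms. Writing $p^t<2n\leq p^{t+1}$ and $q=q_0^{p^m}$ with $p^m=|\Phi|_p$, the maximal order of a unipotent element of $\Sp_{2n}(q)$ is $p^{t+1}$ (a single Jordan block of size $2n$, which exists in odd characteristic), so $\eta_p(\Sp_{2n}(q))=p^{t+1}$ and, by Lemma \ref{za4}(2), $\eta_p(\Aut L)\leq p^{t+m+1}$; in particular $|g|\leq p^{t+m+1}$. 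For the dimension I would use $\dim\phi\geq \frac{q^n-1}{2}$ from Lemma \ref{md6}(4), and for generation $\alpha(g)\leq 2n$ when $n\geq 3$ (Proposition \ref{GS1}(1), with no exception since $q$ is odd) and $\alpha(g)\leq 4$ when $n=2$ (Proposition \ref{GS2}(6)), the latter improving to $\alpha(g)\leq 6$ only if moreover $q=3$, as $g$ is never an involution.

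\textbf{Arithmetic reduction.} By Lemma \ref{uu3}, $\phi(g)$ fails to be almost cyclic whenever $2n\cdot p^{t+m+1}<\frac{q^n-1}{2}$ (and $6p^{t+m+1}<\frac{q^2-1}{2}$ when $n=2$). This is an elementary inequality of the same flavour as Lemma \ref{LU}: using $p^t<2n$ and $q=q_0^{p^m}\geq 3^{p^m}$, it holds for all pairs $(n,q)$ outside a short explicit list. In particular every $q$ with $m\geq 1$ is cleared (there $q\geq 27$ and the bound holds already for $n=2$), so all survivors have $q$ prime; carrying out the estimate they are exactly $\PSp_4(q)$ for $q\in\{3,5,7\}$ together with $\PSp_6(3)$, $\PSp_8(3)$, $\PSp_{10}(3)$ and $\PSp_6(5)$.

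\textbf{The residual groups.} For each survivor $q$ is prime, so $g\in L$ and $|g|=p^j$ with $1\leq j\leq t+1$. Elements of order exactly $p$ are disposed of at once, since then $\dim\phi\leq\alpha(g)(p-1)\leq 2n(p-1)$, which contradicts $\dim\phi\geq\frac{q^n-1}{2}$ in every case but $\PSp_4(3)$. For $j\geq 2$ I would use the eigenvalue-multiplicity argument of Lemma \ref{LU2} (the $\PSU_9(2)$ case): as $\ell\neq p$ the matrix $\phi(g)$ is diagonalizable, and almost cyclicity means one eigenvalue is repeated and the rest are distinct; passing to $h=g^{p^{j-1}}\in L$, of order $p$, every eigenvalue of $\phi(h)$ other than the most frequent one is the image of at most $p^{j-1}$ distinct eigenvalues of $\phi(g)$, so taking $\lambda$ to be that frequent eigenvalue yields $\dim(\lambda\cdot\Id-\phi(h))M\leq (p-1)p^{j-1}$ and hence $\dim\phi\leq\alpha(h)(p^j-p^{j-1})$. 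For $\PSp_6(3),\PSp_8(3),\PSp_{10}(3),\PSp_6(5)$ this confines $\dim\phi$ to a narrow range that, by Lemma \ref{md6}(4), contains only the two Weil representations of degree $\frac{q^n\pm 1}{2}$; an explicit computation of the eigenvalue multiplicities of the relevant (regular) unipotent class in those Weil representations then shows $\phi(g)$ is never almost cyclic. The groups $\PSp_4(5)$ and $\PSp_4(7)$ are pinned to $\dim\phi=\frac{q^2\pm 1}{2}$ in the same way and cleared.

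\textbf{The exceptional group and the obstacle.} Only $L=\PSp_4(3)$ then remains, and it does produce occurrences. Here I would read off from \cite{Atl,MAtl} and the GAP character and Brauer tables \cite{GAP} precisely which classes ($3a,3b,3c,3d$ and the elements of order $9$) are almost cyclic in which irreducible representations, treating $\ell\neq 2,3$ and $\ell=2$ separately; this yields the two lists (1) and (2). The hardest part is the family $\PSp_6(3),\PSp_8(3),\PSp_{10}(3)$: there $\frac{q^n-1}{2}$ only barely exceeds $\alpha(g)\,\eta_p(\Aut L)$, so the naive counting fails and one genuinely needs the eigenvalue-multiplicity refinement together with the explicit structure of the small Weil representations, $\PSp_{10}(3)$ being far too large for a direct machine search; a secondary delicate point is extracting the correct $\ell=2$ modular data for $\PSp_4(3)$.
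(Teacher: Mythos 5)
Your route is genuinely different from the paper's. The paper splits on whether $g\in L$: since $p\mid q$ and $q$ is odd, $p$ is odd, so $g\notin L$ forces a nontrivial field‑automorphism part ($m>0$), and that case is killed by the single inequality $2t+m+3\leq np^m$; the case $g\in L$ — which is where all the surviving groups live, since each survivor has $q$ prime — is disposed of in one line by citing \cite[Theorem 1.1 and Lemma 4.2]{DMZ10}, the earlier paper on unipotent elements, which already contains the full determination including the $\PSp_4(3)$ lists. You instead re‑prove the unipotent case from scratch. Your arithmetic reduction is correct and your survivor list $\PSp_4(3),\PSp_4(5),\PSp_4(7),\PSp_6(3),\PSp_6(5),\PSp_8(3),\PSp_{10}(3)$ is exactly right; your eigenvalue‑multiplicity refinement for elements of order $p^j$, $j\geq2$ (passing to $h=g^{p^{j-1}}$ and bounding $\dim(\lambda\cdot\Id-\phi(h))M$ by $(p-1)p^{j-1}$) is sound and is precisely the device the paper uses for $\PSU_9(2)$ in Lemma \ref{LU2}. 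What the paper's route buys is economy; what yours buys is self‑containedness, at the price of redoing the Weil‑representation computations that \cite{DMZ10} already contains.

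Two concrete points in your residual analysis need repair. First, the claim that elements of order exactly $p$ are ``disposed of at once'' outside $\PSp_4(3)$ is false: for $\PSp_4(5)$ the bound gives $\dim\phi\leq 4\cdot4=16\geq 12=\frac{q^2-1}{2}$, and for $\PSp_4(7)$ it gives $24\geq 24$, so the Weil representations of degrees $12,13$ (resp.\ $24$) survive; your closing sentence tacitly concedes this, but as written the argument contradicts itself. Second, restricting the explicit check to ``the regular unipotent class'' is not enough: every unipotent class of the relevant order that survives the $\alpha(g)(|g|-1)$ bound must be examined — e.g.\ transvections and type‑$(2,2)$ elements of order $5$ in $\Sp_4(5)$, and the non‑regular order‑$9$ classes (types $(4,2)$, $(4,4)$, etc.) in $\Sp_6(3)$ and $\Sp_8(3)$. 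Finally, be aware that the deferred multiplicity computation is genuine work: the crude estimate via $|\omega(u^k)|=q^{\dim\ker(u^k-1)/2}$ does not decide almost cyclicity (for $\Sp_{10}(3)$ the term $|\omega(u^9)|=3^{9/2}$ swamps the error bound), so one needs the exact Weil character values from \cite{Ge} or, equivalently, the tables of \cite{DMZ10} — at which point one is essentially reconstructing the reference the paper cites.
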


\begin{proof}
First, note that $\eta_p(G)\leq p^{t+m+1}$, where $p^t < 2n\leq p^{t+1}$.
Thus, we aim to show that $2n\eta_p(G)\leq 2n p^{t+m+1}\leq \frac{q^n-1}{2}$, that is, 
$4n p^{t+m+1}\leq q^n-1$.
Now,  $4n p^{t+m+1}\leq 2p^{2t+m+2}\leq  p^{2t+m+3}-1$
and $q^n-1\geq p^{np^m}-1$. Therefore, it will suffice to consider the inequality
$2t+m+3\leq np^{m}$.

If $g \in L$, we may refer to \cite{DMZ10}. In particular, by \cite[Theorem 1.1 and Lemma 4.2]{DMZ10}, 
$\phi(g)$  is almost cyclic if and only if $L=\PSp_4(3)$ and (1) or (2) holds.
So, suppose that $g \not \in L$. This implies in particular that we may assume $m>0$.

If $t\geq 1$, then $2t+m+3\leq 
p^{t+m-1}\leq np^{m}$, unless $t=m=1$ and $p=3,5$.
For $p=3$, we get $3< 2n \leq 9$ and
$2n\eta_3(G)\leq 54n \leq \frac{3^{3n}-1}{2}$.
For $p=5$, we get  $5< 2n \leq 25$ and
$2n\eta_5(G)\leq 250n \leq \frac{5^{5n}-1}{2}$. Thus we are done.

Finally, suppose that $t=0$, i.e. $2n\leq p$. In this case, we have to consider the inequality
$2n\eta_p(G)\leq p^{m+2}\leq \frac{p^{np^m}-1}{2}$.
Since $2p^{m+2}\leq p^{m+3}-1\leq p^{np^m}-1$ is equivalent to
$m+3\leq n p^m$, we only need to observe that $m+3\leq 2\cdot 3^m$ holds for all $m\geq 1$, thus proving the statement.
\end{proof}

 \section{Some low-dimensional classical groups}\label{low}
 
In this section we deal with the low-dimensional simple groups $L$ which were left out in either or both the two
previous sections. These groups, which need a separate treatment, are the following: $\PSL_3(2), \PSL_3(4), \PSL_4(2),
\PSU_3(3), \PSU_4(2) \cong \PSp_4(3), \PSU_4(3)$. Observe that, with the exception of $\PSU_3(3)$  they all have an
exceptional Schur multiplier (by this, we mean that the order of the multiplier is divisible by the defining
characteristic of the group in question).  As always, $G=\langle g,L\rangle$, where $g\in \Aut L$. When needed, we use
the notation of the GAP package to denote $G$ and the conjugacy class of $g$. Also, in view of
Lemma  \ref{news2a}, we may assume $g^2\neq 1$.

\subsection{$L=\PSL_3(2) \cong \PSL_2(7)$}\label{L32}

Here $|g| \in \{ 3, 4, 7, 8 \}$ and $\alpha(g)=2$.
Using the GAP package, we find that $\phi(g)$ is almost cyclic if
and only if one of the following occurs:
\begin{itemize}
\item $|g|=3$ and  either $\ell\neq 7$ and  $\dim \phi =3,4$, or $\ell=7$ and  $\dim \phi =2,3,4$;
\item$|g|=4$ and either $\ell\neq 7$ and $\dim \phi=3,4$,  or $\ell=7$ and $\dim \phi=2,3,4,5$;
\item $|g|=7$ and  either $\ell\neq 3,7$ and $\dim \phi =3,4, 6,7,8$, or
$\ell=3$ and  $\dim \phi =3,4, 6,7$, or
$\ell=7$ and  $\dim \phi =2,3,4,5,6,7$;
\item $|g|=8$ and either  $\ell\neq 7$ and $\dim \phi=6,7,8$,
or $\ell=7$ and $\dim \phi=2,3,4,5,6,7$.
\end{itemize}

\subsection{$L=\PSL_3(4)$}\label{L34}
 
Here $|g| \in \{3, 4, 5, 7, 8 \}$.
If $g \not \in L$ and has order $3$, then $\alpha(g)\leq 3$; otherwise $\alpha(g)=2$.
 
Suppose first that $g$ is a $2$-element. Then, using the GAP and MAGMA packages we find that $\phi(g)$ is almost cyclic
if and only if one of the following occurs:
\begin{itemize}
\item $\ell\neq 3,7$: $|g|=8$ and either $G=L.2_1$ and $\dim \phi=6$ or $G=L.2_3$ and $\dim \phi=8$;
\item $\ell=3$: $|g|=4$, $G=L$ and $\dim \phi=4$; $|g|=8$ and either $G=L.2_1$ and $\dim \phi=6$, or
$G=L.2_2$ and $\dim \phi=4,6$, or $G=L.2_3$ and $\dim \phi=6,8$;
\item $\ell=7$: $|g|=8$ and either $G=L.2_1$ and $\dim \phi=6$, or $G=L.2_3$ and $\dim \phi=8$, or $g\in 8a$ and
$\dim \phi=10$.
\end{itemize}

Next, suppose that $|g| = 3$. Then, again using the GAP and MAGMA packages we find that $\phi(g)$ is almost cyclic if and only if: 
\begin{itemize}
\item $\ell=3$, $g\in L$ and $\dim \phi=4$.
\end{itemize}

Finally, suppose that $|g| = 5,7$. Here in principle we have to examine representations of dimension $4$, $6$, $8$
and $10$, where 
$4$-dimensional representations only occur for $\ell = 3$. Furthermore, the Sylow $5$-subgroups as well as the
Sylow $7$-subgroups are cyclic; hence, when $\ell = 5$ and $\ell = 7$, respectively, 
we can refer to \cite[Lemma 2.13 and Corollary 2.14] {DMZ12}. 
As a result, we get that $\phi(g)$ is almost cyclic if and only if one of the following occurs:
\begin{itemize}
\item $|g|=5$ and either $\ell=3$ and $\dim \phi=4$, or $\dim \phi = 6$;  
\item $|g|=7$ and either $\ell=3$ and $\dim \phi=4$, or $\dim \phi = 6,8$.
\end{itemize}

\subsection{$L=\PSL_4(2)$}\label{L42}

Here $|g|\in \{  3, 4, 5, 7, 8 \}$ and
either $\dim \phi \in \{7,8,13,14\}$  or $\dim \phi \geq 16$.
If $|g|=3$, then $g \in L$ and $\alpha(g)\leq 4$, and hence we only have to examine the representations $\phi$ of
dimension $7$ and $8$.
If $|g|=4$ and $g \not \in L$, then $\alpha(g)\leq 3$, and hence we only have to examine the representations $\phi$ such that  $7\leq \dim \phi\leq 9$.
In all the other cases, $\alpha(g)=2$.
Using the GAP package, we find that $\phi(g)$ is almost cyclic if and only if one of the following occurs:
\begin{itemize}
 \item $\dim  \phi=7$  and either $g \in 3a, 5$, or  $|g|=4$ and $g \not \in L$;
 \item $\dim  \phi=7,8$ and $|g|=7,8$.
\end{itemize}

\subsection{$L=\PSU_3(3)$}\label{U33}

Using the GAP package, we find that  $\alpha(g)=2$ for $|g|=7,8$ and $\alpha(g)\leq 3$ for $|g|=3,4$.
Suppose first that $p\neq 3$. The following holds.

If $G=L$, then $\phi(g)$ is almost cyclic if and only if:
\begin{itemize}
\item $|g|=7,8$, $\dim \phi=6,7$ and $\ell\neq 2,3$;
\item $|g|=7,8$, $\dim \phi=6$ and $\ell=2$.
\end{itemize}

If $G=L.2$, then $p=2$ and $\phi(g)$ is almost cyclic if and only if:
\begin{itemize}
\item $|g|=8$, $\dim \phi=6,7$ and $\ell\neq 2,3$;
\item $|g|=8$, $\dim \phi=6$ and $\ell=2$.
\end{itemize}

Next, assume that $p=3$.
In this case $|g|=3$, $\alpha(g)\leq 3$  and $\dim \phi \geq 6$. So, we only need to examine the representations
$\phi$ of degree $6$. Note that $g \in L$.
Again, using the GAP package, we obtain that $\phi(g)$ is not almost cyclic.

\subsection{$L=\PSU_3(4)$}\label{U34}

This group was left out by assumption in Lemma \ref{u34}. In this case $\eta_2(G)= 16$, $\alpha(g)=2$  and $\dim \phi\geq 12$.
If $|g|=4$, then $\phi(g)$ cannot be almost cyclic, by Lemma \ref{uu3}. If $|g|=8$, we only need to examine the representations $\phi$ such that $12\leq \dim \phi\leq 14$. If $|g|=16$, we only need to examine the representations $\phi$ such that
$12\leq \dim \phi\leq 30$.
Using the GAP package, we find that $\phi(g)$ is almost cyclic if and only if
$|g|=16$ and $\dim \phi=12$.

\subsection{$L=\PSU_4(2) \cong \PSp_4(3)$}\label{U42}

In this case, $|g| \in \{ 3, 4, 5, 8, 9 \}$.
If $|g|=3$, then $g \in L$ and $\alpha(g)\leq 4$.
If $|g|=4$ and $g \in L$, then $\alpha(g)=2$ (in which case we only have to examine the representations $\phi$ such that
$4\leq \dim \phi\leq 6$).
If $|g|=4$ and $g \not \in L$, then $\alpha(g)\leq 3$ (in which case we only have to examine the representations $\phi$ such that
$4\leq \dim \phi\leq 9$). If $|g|=8$, then $g \not \in L$ and $\alpha(g)=2$ (hence we only have to examine the representations $\phi$ such that
$4\leq \dim \phi\leq 14$). Finally, if $|g| = 5,9$, then $\alpha(g)=2$.
Using the GAP package we find that $\phi(g)$ is almost cyclic if
and only if one of the following cases occurs:

\begin{itemize}
 \item $G=L$ and
\begin{itemize}
\item[(a)] $\ell\neq 3$, $g \in 3a,3b,3d$ or $|g|=4,5, 9$ and $\dim \phi=4$;
\item[(b)] $\ell=3$, $g \in 3a,3b, 4a$ or $|g|=5,9$ and $\dim \phi=4$;
\item[(c)] $\ell\neq 2,3$, $g\in 3d, 4b$ or $|g|=5,9$ and $\dim \phi=5$;
\item[(d)] $\ell= 3$, $g \in 3c,3d,4b$ or $|g|=5,9$ and $\dim \phi=5$;
\item[(e)] $\ell \neq 3$, $g \in 3c$ or $|g|=5,9$ and $\dim \phi=6$.
\end{itemize}

\item $G=L.2$ and
\begin{itemize}
\item[(f)] $\ell=3$, $g\in 4d$ or $|g|=8$ and $\dim \phi=4$;
\item[(g)] $\ell=3$, $|g|=4,8$ and $\dim \phi=5$;
\item[(h)] $\ell\neq 3$, $|g|=4,8$ and $\dim \phi=6$.
\end{itemize}
\end{itemize}
Clearly, if we view $L$ as $\PSU_4(2)$ we must add the extra condition $\ell\neq 2$ to items (a), (e) and (h), 
whereas if we view $L$ as $\PSp_4(3)$ we must ignore items (b), (d), (f) and (g). Also observe that in item (c) the 
assumption $\ell \neq 2$ is due to the fact that $\PSp_4(3)$ does not have $2$-modular irreducible representations of 
degree $5$.

\subsection{$L=\PSU_4(3)$}\label{U43}

In this case, $\dim\phi \geq 6$. Observing that if $g \not \in L$, then $g$ is a $2$-element, and using the GAP package,
we get the following evidences for the values of $\alpha(g)$ and the almost cyclicity condition on $\phi(g)$: 
$$\begin{array}{c|c|c|c||c|c|c|c }
|g| &  & \alpha(g) & \dim \phi\leq & |g| &  & \alpha(g) & \dim \phi\leq \\\hline
3 & g \in L & \leq 4 &  4(3-1)=8 & 7 & g \in L & =2 &  2(7-1)=12 \\
4  & g \in L & =2 &  2(4-1)=6 & 8 & g\in L & =2 &  2(8-1)=14 \\
4 & g \not \in L &  \leq 4 &  4(4-1)=12 & 8 & g\not\in L & =2 &  2(8-1)=14 \\
5 & g \in L & =2 &  2(5-1)=8 & 9 & g \in L & =2 &  2(9-1)=16
\end{array}$$

It follows that we only have to examine the representations $\phi$ such that $6\leq \dim\phi \leq 16$. It turns out that if either $\ell=0$ or $(p,\ell)=1$,
then  $\phi(g)$ is almost cyclic if and only if $\dim \phi=6$ and
\begin{itemize}
\item $G=L$, $g \in 3b$ or $|g|=5,7,8,9$;
\item $G=L.2_2$,  $g\in 4d$ or $|g|=8$ and $\ell\neq 2,3$.
\end{itemize}

Next, suppose that $\ell = p$. Then, for $\ell=2,5,7$, we are left to examine the following possibilities:

\begin{itemize}
\item[(1)] $\ell=2$, $G\in \{L, L.2_2\}$,  $|g|=4,8$ and $\dim \phi=6$;
\item[(2)] $\ell=2$, $G=L.2_1$, $|g|=4,8$ and $\dim \phi=12$;
\item[(3)] $\ell=5$, $G=L$,  $|g|=5$ and $\dim \phi=6$;
\item[(4)] $\ell=7$, $G=L$,  $|g|=7$ and $\dim \phi=6$.
\end{itemize}
Considering (1), we see that if $|g| = 8$, then, since $\dim \phi=6$, $\phi(g)$ must have a
single non-trivial block of size $\geq 5$. Thus $\phi(g)$ is almost cyclic. So, assume that $|g| = 4$. 
Assume first that $g\in L$. There are two classes of elements of order $4$ in $L$, namely $4a,4b$. 
If $g\in 4a$, w.l.o.g
we may assume that $g$ lies in a maximal subgroup $M$ of $L$ isomorphic to $\PSU_3(3)$. We find that $\phi$ 
restricted to
$M$ remains irreducible and $\phi(g)$ is the sum of two Jordan blocks of size $3$. So $\phi(g)$ is not almost cyclic.
If $g\in 4b$, w.l.o.g we may assume that $g$ lies in a maximal subgroup $N$ of $L$ isomorphic to $A_7$. We find that
$\phi$ restricted to $N$ remains irreducible and $\phi(g)$ is the sum of two Jordan blocks of size $4$ and $2$. So again
$\phi(g)$ is not almost cyclic. 

Next, assume that $g\not\in L$ and $\langle L,g\rangle=L.2_2$.
There are two classes of such elements, namely $4c,4d$.
If $g\in 4c$, w.l.o.g
we may assume that $g$ lies in a maximal subgroup $P$ of $L$ isomorphic to $\PSU_3(3).2$.
We find that $\phi$ restricted to
$P$ remains irreducible and $\phi(g)$ is the sum of two Jordan blocks of size $3$. So $\phi(g)$ is not almost cyclic.
If $g\in 4d$, w.l.o.g we may assume that $g$ lies in a maximal subgroup $Q$ of $L$ isomorphic to $S_7$. We find that
$\phi$ restricted to $Q$ remains irreducible and $\phi(g)$ is the sum of two Jordan blocks of size $1$ and one of
size $4$. So, $\phi(g)$ is almost cyclic.

The instances in (2) can be ruled out, as a $2$-modular $12$-dimensional representation of $G$ when 
restricted to $L$ splits into two $6$-dimensional components intertwined by $g$. Now, $\phi(g)$ is almost cyclic if and 
only if
$g^2$ is cyclic on each component, but this cannot be by \cite[Proposition 2.14]{DMZ0}.

Finally, observe that the Sylow $5$-subgroups as well as the Sylow $7$-subgroups are cyclic. 
It follows that in instances (3) and (4), by \cite[Lemma 2.13 and Corollary 2.14] {DMZ12}, 
$\phi(g)$ is almost cyclic (in fact, cyclic in instance (4)).

We conclude that, for any value of $\ell$, $\phi(g)$ is almost cyclic if and only if $\dim \phi=6$ and one of the following occurs:
\begin{itemize}
\item $G=L$ and either $g \in 3b$ or $|g|=5,7,8,9$;
\item $G=L.2_2$ and either $g \in 4d$ or $|g|=8$.
\end{itemize}

\begin{rem}
Note that in the following cases $\phi|_L$ is an irreducible Weil representation of $L$, and $g$ is a semisimple element. Thus, they have been already described in \cite{DMZ12}.
\begin{itemize}
\item[(1)] $G=\PSL_3(2)$, $\dim \phi=5$, $\ell=7$ and $|g|=7$;
\item[(2)] $G=\PSL_3(2)$, $\dim \phi=7$ and  $|g|=7$;
\item[(3)] $G=\PSU_3(3)$, $\dim \phi=6$ and $|g|=7,8$;
\item[(4)] $G=\PSU_3(3)$, $\dim \phi=7$, $\ell\neq 2$ and $|g|=7,8$;
\item[(5)] $G=\PSU_4(2)$, $\dim\phi =5$ and $g \in 3d$ or $|g|=5,9$;
\item[(6)] $G=\PSU_4(2)$, $\dim \phi=5$, $\ell = 3$ and $g \in 3c$;
\item[(7)] $G=\PSU_4(2)$, $\dim\phi =6$, $\ell\neq 3$ and $g \in 3c$ or $|g|=5,9$;
\item[(8)] $G=\PSp_4(3)$, $\dim\phi =4$ and $|g|=4,5$;
\item[(9)] $G=\PSp_4(3)$, $\dim\phi =5$, $\ell \neq 2$ and $g \in 4b$ or $|g|=5$.
 \end{itemize}
\end{rem}

 \section{The groups $\Sp_{2n}(q)$, $n > 1$, $q$ even; $\Omega_{2n+1}(q)$, $n> 2$, $q$ odd; 
 $\POmega_{2n}^{\pm}(q)$, $n> 3$}\label{otherclass}
 
As shown in \cite[Theorem 2.16]{GMPS}, if $L$ is one of the simple groups 
$\Sp_{2n}(q)$ ($n\geq 2$ and $q$
even), $\Omega_{2n+1}(q)$ ($n\geq 3$ and $q$ odd) or $\POmega_{2n}^{\pm }(q)$ ($n\geq 4$), 
then $|g|\leq \frac{q^{n+1}}{q-1}$  for all $g \in \Aut L$.

\begin{lemma}\label{SP2}
Let $L=\Sp_{2n}(q)$, where $n\geq 2$, $q$ is even and $(n,q)\neq (4,2)$. Let $g \in \Aut L$, where $g^2\neq 1$, and let $G=\langle
L,g\rangle\subseteq \Aut L$.
Let $\phi$  be a cross-characteristic projective irreducible representation $\phi$ of $G$
that is non-trivial on $L$.
Then $\phi(g)$ is  almost cyclic if and only if one of the following occurs:
\begin{itemize}
\item[(1)] $G=\Sp_4(4)$, $|g|=17$ and $\dim \phi=18$.
\item[(2)] $G=\Sp_6(2)$ and
\begin{itemize}
\item[(a)] $\dim \phi=7$ and either $g \in 3a, 4c$, or $|g|=5,7,8,9$;
\item[(b)] $\dim \phi=8$ and $|g|=7,8,9$.
\end{itemize}
\end{itemize}
\end{lemma}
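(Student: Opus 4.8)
The plan is to follow the template of the preceding lemmas in this section. Assuming $\phi(g)$ almost cyclic, Lemma \ref{uu3} forces $\dim\phi\leq \alpha(g)(|g|-1)$, while Lemma \ref{md6}(5) supplies the lower bound $\dim\phi\geq \frac{q(q^n-1)(q^{n-1}-1)}{2(q+1)}$ (valid since $(n,q)\neq(4,2)$), which for $n=2$ simplifies to $\frac{q(q-1)^2}{2}$. The strategy is to show that the right-hand product is too small to meet this lower bound for all but a short list of groups, and then to settle that list, together with the converse direction, by direct inspection.

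To bound $\alpha(g)$, note that since $g^2\neq 1$ the element $g$ is not a transvection, so Proposition \ref{GS1}(1) gives $\alpha(g)\leq 2n$ whenever $n\geq 3$. For $n=2$ the natural module has dimension $4$ and Proposition \ref{GS1} does not apply; here I invoke the elementary fact that $\alpha(g)\leq\alpha(h)$ for $h=g^{|g|/p}$ of prime order (if conjugates $h^{y_1},\dots,h^{y_s}$ generate $G$ then so do $g^{y_1},\dots,g^{y_s}$, since each $(g^{y_i})^{|g|/p}=h^{y_i}$). As $q$ is even, $h$ is either an involution or of odd prime order, and Proposition \ref{GS2}(6) then gives $\alpha(g)\leq 5$. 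To bound $|g|$, the crude estimate $|g|\leq q^{n+1}/(q-1)$ of \cite[Theorem 2.16]{GMPS} disposes of the bulk of the cases, but for the borderline groups it must be replaced by the sharper $|g|\leq\eta_p(\Aut L)$ obtained from Section \ref{se3}: for unipotent $g$ (that is, $p=2$) this yields $|g|\leq 2^{\lceil\log_2 2n\rceil}$, up to the bounded contribution of field and graph-field automorphisms, while for odd $p$ a $p$-element of large order is regular semisimple and hence satisfies $\alpha(g)\leq 3$ by Lemma \ref{g12}.

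Combining these bounds disposes of the generic cases. Over $\F_2$ with $n\geq 5$ one has $\alpha(g)(|g|-1)\leq 2n\bigl(2^{\lceil\log_2 2n\rceil}-1\bigr)<\frac{(2^n-1)(2^{n-1}-1)}{3}\leq\dim\phi$ for $p=2$, and the odd-$p$ elements are killed by the regular-semisimple refinement; since $(4,2)$ is excluded, only $\Sp_6(2)$ survives over $\F_2$. For $q\geq 4$ and $n\geq 3$ the refined order bound forces $\alpha(g)(|g|-1)<\frac{q(q^n-1)(q^{n-1}-1)}{2(q+1)}$, so no example survives (the few borderline groups, such as $\Sp_6(4)$, being dispatched exactly by the $\eta_p$-estimates of Section \ref{se3}). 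Finally, for $n=2$ the inequality $\frac{q(q-1)^2}{2}\leq 5(|g|-1)$ forces $q\in\{2,4,8\}$; here $\Sp_4(2)$ is not simple and is treated in Section \ref{PSL2} (as $\Sp_4(2)'\cong\PSL_2(9)$ and $\Sp_4(2)\cong\mathrm{P\Sigma L}_2(9)$), whereas $\Sp_4(8)$ is eliminated by the sharper order bounds, every $2$-element there having order at most $8$ and every odd $p$-element of large order being regular semisimple. Thus only $\Sp_4(4)$ and $\Sp_6(2)$ remain.

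For these two groups the bounds leave only finitely many pairs $(\dim\phi,|g|)$ to examine, and the relevant irreducible cross-characteristic representations, together with the conjugacy classes, are available through \cite{Atl,MAtl} and the GAP package. Running this check yields exactly the representations and classes recorded in (1) and (2) and, at the same time, verifies the converse that $\phi(g)$ is genuinely almost cyclic in each listed case. I expect the main obstacle to lie not in the generic estimate but in the borderline groups over $\F_2$, where the crude order bound from \cite{GMPS} fails by a narrow margin (for instance for $\Sp_{10}(2)$): there one really needs both the exact value $\eta_2(\Sp_{2n}(2))=2^{\lceil\log_2 2n\rceil}$ and the improvement $\alpha(g)\leq 2n$ (rather than $2n+1$) that comes precisely from $g$ not being a transvection.
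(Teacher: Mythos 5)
Your proposal is correct and follows essentially the same route as the paper's proof: combine the lower bound of Lemma \ref{md6}(5) with the upper bound $\dim\phi\leq\alpha(g)(|g|-1)$ from Lemma \ref{uu3}, using Propositions \ref{GS1}--\ref{GS2} and \cite{GMPS} to cut the problem down to a short list of pairs $(2n,q)$, eliminate all but $\Sp_4(4)$ and $\Sp_6(2)$ by sharper order and $\alpha$ estimates, and settle those two groups (and the converse) by GAP. Your refinements (using $\alpha(g)\leq 2n$ since $g$ is not a transvection, and the prime-order power trick for $n=2$) are harmless improvements on the paper's uniform $\alpha(g)\leq 2n+1$; the only caveat is that not every odd-order $p$-element over $\F_2$ is regular semisimple (e.g.\ order-$17$ elements of $\Sp_{10}(2)$ have a $2$-dimensional fixed space), so a few borderline eliminations still require the same kind of ad hoc check via Lemma \ref{uu3} that the paper also leaves to the reader.
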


\begin{proof}
First, recall that $\dim \phi \geq \frac{(q^n-1)(q^n-q)}{2(q+1)}$ and
$\alpha(g)\leq 2n+1$ for any $g \in \Aut L$.
Suppose that $\phi(g)$ is almost cyclic. By Lemma \ref{uu3} this implies that
$\frac{(q^n-1)(q^n-q)}{2(q+1)}\leq (2n+1)(|g|-1)\leq (2n+1)\frac{q^{n+1}-q+1}{q-1}$.
The above inequality holds if and only if $(2n,q)\in \{(4,4),(4,8), (6,2),(6,4),(8,2), (10,2),(12,2),(14,2)\}$. 
Thus, almost cyclicity can only occur for these values of the pair $(2n,q)$.
However, it is to check that, unless $(2n,q) = (4,4),(6,2)$,  $\phi(g)$ cannot be almost cyclic, by Lemma  \ref{uu3}.

If $L=\Sp_4(4)$, then $\dim \phi\geq 18$ and $|g|\in \{3,4,5,8,17 \}$. 
If $|g|=4,8,17$, then $\alpha(g)=2$; if
$|g|=3,5$, then $\alpha(g)\leq 3$. If $|g| < 17$, then $18 > \alpha(g)|g|$, and hence $\phi(g)$ is not almost cyclic. 
If $|g|=17$, then $g \in L$, and using the
GAP package, we see that $\phi(g)$ is almost cyclic if and only if $\dim \phi=18$. Whence item (1) of the statement.

If $L=\Sp_6(2)$, then $|g| \in \{ 3, 4, 5, 7, 8, 9  \}$ and $\dim \phi\geq 7$.
If $|g|=3$, then $\alpha(g)\leq 4$;
if $|g|=4$, then $\alpha(g)\leq 3$; finally, 
if $|g|\geq 5$, then $\alpha(g)=2$.
Thus $\phi(g)$ is not almost cyclic, unless possibly when $\dim\phi \leq 16$.
Using the GAP package,  we see that $\phi(g)$ is almost cyclic if and only if one of the instances listed in item (2) 
of the statement occurs.
\end{proof}

\begin{remar}\label{Sp42d}
The group $\Sp_4(2)$ is not simple. However, the commutator subgroup $L = \Sp_2(2)'$ is simple and isomorphic to 
$\PSL_2(9)$.  Using the package GAP and assuming that $g$ is not involution, we find that 
$\phi(g)$ is almost cyclic if and only if $ G = L$ and the following holds:
\begin{itemize}
\item[(1)] $\dim \phi= 3,4,5$, $\ell\neq 2,3$ and $|g|=3,4,5$;
\item[(2)] $\dim \phi=6$, $\ell\neq 2$ and $|g|=5$;
\item[(3)] $\dim \phi=3,4$, $\ell=2$ and  $|g|=3,4,5$;
\item[(4)] $\dim \phi = 2,3,4$,  $\ell=3$ and $|g|=3,4,5$.
\end{itemize}
\end{remar}

\begin{lemma}\label{O1} 
Let $L=\Omega_{2n+1}(q)$, where $n\geq 3$ and $q$ is odd, or $L=\POmega_{2n}^-(q)$, where $n\geq 4$.
Let $g \in \Aut L$, where $g^2\neq 1$, and let $G=\langle L,g\rangle\subseteq \Aut L$.
Then $\phi(g)$ is not almost cyclic in any cross-characteristic projective irreducible representation $\phi$ of $G$
that is non-trivial on $L$.
\end{lemma}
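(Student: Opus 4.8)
The plan is to run the dimension squeeze of Lemma \ref{uu3}, exactly as in the proofs of Lemmas \ref{4ss} and \ref{SP2}, and then to clear the residual small cases by sharpening the two inputs. Assume $\phi(g)$ is almost cyclic. Since $g$ is a $p$-element with $g^2\neq 1$, Lemma \ref{uu3} gives $\dim\phi\le\alpha(g)(|g|-1)$. The natural module of $L$ has dimension $2n+1\ge 7$ in the odd-dimensional case and $2n\ge 8$ in the minus case, so Proposition \ref{GS1}(1) yields $\alpha(g)\le 2n+1$ (the transvection exception there concerns only $\PSp$ in even characteristic and is irrelevant). Combining this with the order bound $|g|\le \frac{q^{n+1}}{q-1}$ recalled at the start of this section gives
\[
\dim\phi\le (2n+1)\left(\frac{q^{n+1}}{q-1}-1\right).
\]
Lower bounds for $\dim\phi$ are supplied by Lemma \ref{md6}(6) for $L=\Omega_{2n+1}(q)$ and by Lemma \ref{md6}(8) for $L=\POmega_{2n}^-(q)$; in both families they grow like $q^{2n-2}$, whereas the right-hand side above grows only like $(2n+1)q^n$.

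The routine part is the elementary verification that the lower bound of Lemma \ref{md6} strictly exceeds $(2n+1)(\frac{q^{n+1}}{q-1}-1)$ for all $(n,q)$ outside a short explicit list, which produces the desired contradiction in the generic range. The survivors are the low-rank, small-field groups, where $q^{n-2}$ is not yet large compared with $2n+1$: on the odd side $\Omega_7(q)$ with $q\le 7$ and $\Omega_9(3)$, and on the minus side $\POmega_8^-(q)$ with $q\le 8$, $\POmega_{10}^-(q)$ with $q\le 3$, $\POmega_{12}^-(2)$ and $\POmega_{14}^-(2)$. For these the crude order bound $\frac{q^{n+1}}{q-1}$ is far too generous, precisely because $g$ is a $p$-element. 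I would therefore replace it by the exact exponent $\eta_p(\Aut L)$, computed through the machinery of Section \ref{se3} (Lemmas \ref{ge7}, \ref{za4}, \ref{za4bis}, \ref{nn9} and Corollary \ref{ce7}) after embedding the isometry group into a suitable $\GL$ or $\GU$ whose $p$-part it shares. Simultaneously, whenever $C_L(g)$ contains no unipotent element—automatic for the large-order $p$-elements, those lying in a Coxeter-type or Singer-type torus—I would invoke Lemma \ref{g12}(2) to replace $\alpha(g)\le 2n+1$ by the much stronger $\alpha(g)\le 3$.

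This pair of refinements eliminates most of the list, and the few genuinely small groups that remain (for instance $\Omega_7(3)$ and $\POmega_8^-(2)$), where even $3(|g|-1)$ can approach the minimal dimension, would be finished by direct inspection of the Brauer character tables and the Jordan form of $\phi(g)$ in GAP, as done for the residual cases earlier in the paper.

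The main obstacle is exactly this residual analysis for the $q=2$ minus-type groups of moderate rank and for $\Omega_7(q)$ with $q\in\{3,5,7\}$, where the minimal dimension is only a small multiple of the maximal $p$-element order, so that Lemma \ref{uu3} alone is inconclusive. Here $g$ cannot be treated uniformly: the semisimple $p$-elements must be organized according to the rank of the torus containing them (equivalently, according to the multiplicities of the eigenvalues of $g$ on the natural module), bounding $|g|$ by the relevant torus order and $\alpha(g)$ by $3$ for the regular ones, while the non-regular $p$-elements necessarily have smaller order and are confined to proper classical subgroups. Keeping this bookkeeping tight enough to beat the dimension bound of Lemma \ref{md6}, and certifying the last handful of cases computationally, is where the real work lies.
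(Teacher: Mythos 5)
Your proposal follows essentially the same route as the paper: apply Lemma \ref{uu3} with $\alpha(g)$ bounded by the dimension of the natural module via Proposition \ref{GS1}(1) and $|g|\leq \frac{q^{n+1}}{q-1}$ from \cite[Theorem 2.16]{GMPS}, compare against the lower bounds of Lemma \ref{md6}(6),(8) to reduce to the same short list of small $(n,q)$, and then dispose of the survivors by sharpening $|g|$ and $\alpha(g)$ case by case. The paper compresses that residual step into "routine to check from the available data," and your outline of how to carry it out (exact $p$-exponents, $\alpha(g)\leq 3$ for regular semisimple elements, direct computation for the smallest groups) is consistent with what is done elsewhere in the paper.
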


\begin{proof}
Suppose that $L=\Omega_{2n+1}(q)$, and assume that $\phi(g)$ is almost cyclic.
By Lemma \ref{md6}, if $q=3$ and $n\geq 4$, then $\dim \phi \geq \frac{(3^n-1)(3^n-3)}{8}$,
whence the bound
$$\frac{(3^n-1)(3^n-3)}{8}\leq (2n+1) \frac{3^{n+1}-2}{2},$$
which holds if and only if $n=4$.
If $q>3$, then, again by Lemma \ref{md6}, $\dim \phi \geq \frac{q^{2n}-1}{q^2-1}-2$,
whence the bound
$$\frac{q^{2n}-2q^2+1}{q^2-1}\leq (2n+1) \frac{q^{n+1}-q+1}{q-1},$$
which holds if and only if $n=3$ and $q=5,7$.
By the above, we are left to examine the cases where $(n,q)\in\{(3,3),(3,5),(3,7),(4,3)\}$. 
In each of these cases, it is routine to check that  $\phi(g)$ is not almost cyclic, by Lemma \ref{uu3}.

Now, suppose that $L=\POmega_{2n}^-(q)$ and assume that $\phi(g)$ is almost cyclic.
By Lemma  \ref{md6}, (8), 
$\dim \phi \geq \frac{(q^n+1)(q^{n-1}-q)}{q^2-1}-1$, provided $(n,q) \neq 
(4,2),(4,4),(5,2),$ $(5,3)$. In the first case,
the following bound must be met:
$$\frac{(q^n+1)(q^{n-1}-q)-(q^2-1)}{q^2-1}  \leq 2n \frac{q^{n+1}-q+1}{q-1}.$$
However, this only holds if either $n=4$ and $q\leq 8$ or $n=6,7$ and $q=2$.
Thus, we are left to examine the following possible exceptional cases: 
$(n,q) = (4,2),$ $(4,3),(4,4), (4,5),(4,7), (5,2),(5,3),(6,2),(7,2)$.
In each individual case, from the available data we get $\dim \phi > \alpha(g)(|g|-1)$, a contradiction.
\end{proof}

\begin{lemma}\label{O3}
Let $L=\POmega_{2n}^+(q)$, where $n\geq 4$. 
Let $g \in \Aut L$, where $g^2\neq 1$, and set $G=\langle L,g\rangle\subseteq \Aut L$.
Let $\phi$ be a cross-characteristic projective irreducible representation of $G$
non-trivial on $L$. Then $\phi(g)$ is almost cyclic if and only if  
$L= \Omega_8^+(2)$, $\dim \phi=8$ and one of the following occurs:
\begin{itemize}
\item $G = L$ and either $g \in  3a, 5a, 8b$ or $|g| = 7, 9$;
\item $G=\SO_8^+ (2)$ and either $g\in  4f$ or $|g| = 8$.
\end{itemize}
\end{lemma}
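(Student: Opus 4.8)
The plan is to mirror the strategy of Lemmas~\ref{O1} and~\ref{SP2}. Suppose $\phi(g)$ is almost cyclic. Since the natural module for $L=\POmega_{2n}^+(q)$ has dimension $2n\geq 8$, Proposition~\ref{GS1}(1) gives $\alpha(g)\leq 2n$, and the bound $|g|\leq \frac{q^{n+1}}{q-1}$ recorded at the start of this section (from \cite[Theorem~2.16]{GMPS}) combines with Lemma~\ref{uu3} to yield
$$\dim\phi\leq \alpha(g)(|g|-1)\leq 2n\,\frac{q^{n+1}-q+1}{q-1}.$$
Confronting this upper estimate with the lower bounds for $\dim\phi$ from Lemma~\ref{md6}(7) will restrict the admissible pairs $(n,q)$ to a short, explicit list.

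I would then carry out this confrontation in the three regimes of Lemma~\ref{md6}(7). For $q\geq 4$ the inequality $\frac{(q^n-1)(q^{n-1}+q)}{q^2-1}-2\leq 2n\,\frac{q^{n+1}-q+1}{q-1}$ reduces, after clearing denominators, to a condition of the shape $q^{\,n-3}\lesssim 2n$, which forces $n=4$ and small $q$. For $q=3$ and $q=2$ the analogous inequalities, starting from $\frac{(q^n-1)(q^{n-1}-1)}{q^2-1}$ (with the special value $\dim\phi\geq 8$ reserved for $\Omega_8^+(2)$), leave only a handful of further pairs, expected to be $(4,3),(5,3)$ for $q=3$ and $(5,2),(6,2),(7,2)$ together with $\Omega_8^+(2)$ for $q=2$. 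These are elementary monotonicity estimates of the same flavour as Lemma~\ref{no2}, and I would treat them as such rather than grinding through the arithmetic in detail.

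For every surviving pair other than $\Omega_8^+(2)$ I would replace the crude order bound $|g|\leq q^{n+1}/(q-1)$ by the true maximal order of a $p$-element in $\Aut L$, which is governed by the results of Section~\ref{se3} (notably Lemma~\ref{za4}) and is far smaller; re-applying Lemma~\ref{uu3} with this sharper value, the product $\alpha(g)(|g|-1)$ drops below the minimal degree of Lemma~\ref{md6}(7) in each case, so $\phi(g)$ cannot be almost cyclic. The genuinely delicate case is $L=\Omega_8^+(2)$, whose minimal degree is only $8$ and for which the dimension count is inconclusive. Here I would consult the ordinary and modular character data of $\Omega_8^+(2)$ and its extensions through the GAP package, being careful about the exceptional Schur multiplier and the triality automorphism (the outer automorphism group is $S_3$). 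Restricting to $p$-elements $g$ with $g^2\neq 1$ lying in $G=L$ or in $G=\SO_8^+(2)=L.2$, the computation should single out exactly the classes $3a,5a,8b$ and the elements of order $7,9$ in $L$, together with $4f$ and the elements of order $8$ in $\SO_8^+(2)$, all at $\dim\phi=8$. I expect the $\Omega_8^+(2)$ analysis to be the principal obstacle, precisely because triality enlarges the automorphism group and several eight-dimensional representations must be followed across the various extensions.
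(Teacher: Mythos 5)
Your reduction is exactly the paper's: the same confrontation of $2n\,\frac{q^{n+1}-q+1}{q-1}$ with the lower bounds of Lemma~\ref{md6}(7), split into the regimes $q>3$ and $q\leq 3$, survives precisely the pairs you list (the paper gets $n=4$, $q\in\{4,5,7,8\}$ for $q>3$, and $n\leq 5$ or $(n,q)\in\{(6,2),(7,2)\}$ for $q\leq 3$), and each survivor other than $\Omega_8^+(2)$ is then killed by re-applying Lemma~\ref{uu3} with the actual values of $\alpha(g)$ and $|g|$. Up to that point the proposal is correct and matches the paper.

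Where the proposal falls short is in the $\Omega_8^+(2)$ endgame, in two concrete respects. First, ``consulting the ordinary and modular character data'' does not decide almost cyclicity when $\ell=p$: Brauer characters are only defined on $p$-regular classes, so for $\ell=p\in\{3,5,7\}$ the character data say nothing about the Jordan structure of $\phi(g)$ on the degree-$8$ module. (The paper's own MAGMA routine in the Appendix explicitly skips elements whose order is divisible by the field characteristic.) The paper resolves these cases structurally: for $p=7$ it invokes the cyclic-Sylow results of \cite{DMZ12}, and for $p=5$ and $p=3$ it restricts the degree-$8$ representation of $2.L$ to the maximal subgroups of type $2\times\Sp_6(2)$ versus $2.\Sp_6(2)$, which is what separates $5a$ from $5b,5c$ and $3a,9a$ from $3b,3c,9b,9c$ --- this is precisely how the class $5a$ (and not $5b,5c$) ends up in the statement. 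Your plan as written has no mechanism to produce that distinction. Second, the ``only if'' direction requires ruling out $G=L.3$, i.e.\ triality elements of order $3$ and $9$ outside $\SO_8^+(2)$; your verification step explicitly restricts to $G=L$ and $G=\SO_8^+(2)$. For $\ell=3$ one must handle representations of $L.3$ of degree $28$ and $48$, and the generic bound $\alpha(g)\leq 8$ is not sufficient there (e.g.\ $8\cdot 8=64>48$); the paper needs a bespoke argument, exhibiting three elements of the class $3d$ that generate $L$ and lifting them to cubes of outer order-$9$ elements to conclude $\alpha(g)\leq 3$. That idea is absent from the proposal and cannot be replaced by a character-table lookup.
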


\begin{proof}
Suppose that $\phi(g)$ is almost cyclic.
If $q\leq 3$ and $(n,q)\neq (4,2)$, then $\dim \phi \geq \frac{(q^n-1)(q^{n-1}-1)}{q^2-1}$.
It follows that the bound
$$\frac{(q^n-1)(q^{n-1}-1)}{q^2-1} \leq 2n \frac{q^{n+1}-q+1}{q-1}$$
must be met. This happens  if and only if either $n\leq 5$ or $q=2$ and $n=6,7$.
If $q>3$, then $\dim \phi \geq \frac{(q^n-1)(q^{n-1}+q)}{q^2-1}-2$, and so the bound to be met is
$$ \frac{(q^n-1)(q^{n-1}+q)-2(q^2-1)}{q^2-1}  \leq 2n \frac{q^{n+1}-q+1}{q-1}.$$
This holds if and only if $n=4$ and $q=4,5,7,8$, and in each case knowledge of $\alpha(g)$ and 
$|g|$ yields a contradiction by Lemma \ref{uu3}.

Finally, let us deal with the case when $L=\Omega_8^+(2)$. Here $|g|\in \{ 3, 4, 5, 7, 8, 9 \}$ and
$\dim \phi\geq 8$ (see Lemma \ref{md6}). (Indeed, $L$ does have a (unique) projective representation of degree $8$ 
for any characteristic $\ell$, and this is not an ordinary representation of $L$).

Suppose first that $g \in \SO_8^+(2)$. If $|g|=3$, then $\alpha(g)\leq 4$; if $|g|=4$, then $\alpha(g)\leq 3$; if $|g|\geq 5$,
then $\alpha(g)=2$. Since either $\dim \phi=8$ or $\dim \phi \geq 28$, applying the usual bound we are only left to consider 
the case when $\dim \phi=8$. 

Let $G=L$. If $\ell \neq p$, then using the GAP package we find that $\phi(g)$ is almost cyclic if and only if $g$ is one of 
elements described in the statement.

Now, suppose that $\ell = p = 7$.  Since the Sylow $7$-subgroups are cyclic, for $\ell = 7$ we can refer to 
\cite[Lemma 2.13 and Corollary 2.14] {DMZ12}, and we find that $\phi(g)$ is almost cyclic.

Next, suppose that $\ell=p=5$.
We observe that $L=\Omega_8^+(2)$ contains three classes of elements of order $5$ and three classes of maximal subgroups 
isomorphic to $\Sp_6(2)$.
Each of these symplectic groups contains elements from a single conjugacy class of elements $g\in L$  of order $5$ and
each element of order $5$ of $L$ is contained in a symplectic group.
(Note that the same holds for elements of order $9$, considering the three classes $9a$, $9b$, $9c$ of elements of 
order $9$ and the three classes of maximal subgroups isomorphic to $A_9$.)

Now, considering the group $2.L$, we see that the elements of the class $5a$ belong to a maximal subgroup $M$ of type 
$2\times \Sp_6(2)$ (a single class).
The restriction of $\phi$ to $M$ decomposes as the sum of a representation of degree $1$ and a representation $\psi$ of degree $7$.
Since $\psi(g)$ is almost cyclic (see Lemma \ref{SP2}), it follows that $\phi(g)$ is almost cyclic.

On the contrary, the elements of the classes $5b$ and $5c$ belong to a maximal subgroup $N$ of type $2.\Sp_6(2)$ 
(non-split central extensions: there are two classes of such subgroups).
The representation $\phi$ of degree $8$ of $2.L$ restricts irreducibly to $N$ and, working in $N$, we see that
$\phi(g)$ is not almost cyclic.

Finally, suppose that $\ell=p=3$.
The group $L$ has five classes $3a$, $3b$, $3c$, $3d$, $3e$ of elements of order $3$, and three classes $9a$, $9b$, 
$9c$ of elements of order $9$. 
Each of the classes $3a$, $3b$, $3c$, and similarly each of the classes $9a$, $9b$, $9c$ is contained in a unique 
maximal subgroup isomorphic to $\Sp_6(2)$, and conversely each of these symplectic groups contains elements from a 
single conjugacy class of elements of order $3$ and $9$ of $L$, respectively.

On the other hand, if $g$ belongs to one of the classes $3d$, $3e$, then $\alpha(g)=3$. This yields a contradiction, 
by Lemma \ref{uu3}.

Next, considering the group $2.L$, we see that
the elements of classes $3b, 3c, 9b, 9c$ belong to a maximal subgroup $M$ of type $2.\Sp_6(2)$
(there are two classes of such subgroups). The representation $\phi$ of degree $8$ of $2.L$ restricts irreducibly to $M$ and, 
working in $M$,
we obtain that $\phi(g)$ is almost cyclic if and only if $g$ has order $9$.

On the other hand, the elements $g$ of the classes $3a, 9a$ belong to a maximal subgroup $N$ of type $2\times \Sp_6(2)$ 
(a single class). The restriction of $\phi$ to such subgroups decomposes as the sum of a representation of degree $1$ and 
a representation $\psi$ of degree $7$. Since $\psi(g)$ is almost cyclic, $\phi(g)$ is also almost cyclic. 

Next, let $G=\langle L, g \rangle= \SO_8^+(2)$. Then $p=2$, and using the GAP package one can check that
$\phi(g)$ is almost cyclic if
and only if either $|g| \in 4f$ or $|g|=8$, as claimed.

Finally, suppose that $g \notin \SO_8^+(2)$, and hence $|g|=3,9$. Then $\alpha(g)\leq 8$; moreover, either  
$\dim \phi\leq 50 $ or $\dim \phi\geq 105$ (see \cite{Atl}  and \cite{MAtl}).
If $\ell\neq 3$, then use of the GAP package shows that $\phi(g)$ is not almost cyclic.
If $\ell=3$,  we need to consider representations of degree $28$ or $48$.
Applying the usual bound, we can easily see that $\phi(g)$ is not almost cyclic if $|g|=3$.
Now, let $g \not \in L$ be an element of order $9$ belonging so a class $C$. Then the power maps in \cite{Atl} 
show that $g^3$ belongs to the class $3d$ of $L$, and therefore  every element $x\in 3d$ is the cube of some $y \in C$.
We find that we can choose three elements in $3d$, say $x_1,x_2,x_3$, which generate $L$. Let $g_1,g_2,g_3$ 
three elements of $C$  such that
$x_i=g_i^3$ for $i=1,2,3$. 
Thus, the group $H=\langle g_1,g_2,g_3 \rangle$ contains $\langle g_1^3, g_2^3, g_3^3\rangle =L$, whence $H=L.3$.

The above implies that $\alpha(g)\leq 3$ for all $g \in C$. Thus, by Lemma \ref{uu3}, $\phi(g)$ cannot be almost cyclic.
\end{proof}

\section{The exceptional groups of Lie type}\label{sec exc}

In this section we deal with the groups $\Aut L$, where $L$ is a finite simple exceptional group of Lie type.
The notation for the groups $L$ is standard. Note however that our notation for $q$ in the case of twisted groups is 
such that ${}^2E_6(q)\subset E_6(q^2)$, ${}^3D_4(q)\subset D_4(q^3)$,
 ${}^2B_2(q)\subset B_2(q)$, ${}^2F_4(q)\subset F_4(q)$ and ${}^2G_2(q)\subset G_2(q)$. 
We also recall that the structure of $\Aut L$ 
 can be found in \cite[Table 5.1B]{KL}. For a thorough and detailed reference, however, see \cite[Section 2.5 (Theorem 2.5.12)]{GLS}.

The maximal order of an element in $\Aut L$ can be easily deduced from \cite{GMPS,KS}:
these data are summarised in Table \ref{tab1}.  Lower bounds for the dimension of a non-trivial 
cross-characteristic projective representation of $L$ are found in \cite[Table 1, p. 14]{Ho}.
Finally, we recall that, for any non-trivial element $g\in \Aut L$, the minimal number $\alpha(g)$ of $L$-conjugates of 
$g$ sufficient to generate $\langle g,L \rangle$ is provided by Proposition  \ref{GS1}(2).

\begin{table}[!th]
\begin{center}
\begin{tabular}{ccc}
$L$ & Conditions & $|g|\leq $\\\hline\\[-8pt]
${}^2B _2(2^{2e+1})$ & $e\geq 1$ & $(2e+1)(2^{2e+1}+2^{e+1}+1)$\\
$G_2(q)$ & $q=r^e$, $r\neq 3$ & $ e (q^2+q+1)$\\
$G_2(q)$ & $q=3^e$ & $ 2e (q^2+q+1)$\\
${}^2G_2(3^{2e+1})$ & $e\geq 1$ & $(2e+1)(3^{2e+1}+3^{e+1}+1)$\\  
${}^3D_4(q)$ & $q=r^e$ & $3e(q^3-1)(q+1)$\\
$F_4(q)$ &  & $q(q^3-1)(q+1)$\\
${}^2F_4(2^{2e+1})$ & $e\geq 1$ & $(2e+1)(2^{4e+2}+2^{3e+2}+ 2^{2e+1}+2^{e+1} +1)$\\
$E_6(q)$ &  & $q^2(q^3+1)(q^2+q+1)$\\ 
${}^2E_6(q)$ &   & $q (q+1)(q^2+1)(q^3-1)$\\
$E_7(q)$ &  & $q(q+1)(q^2+1)(q^4+1)$ \\
$E_8(q)$ &  & $q(q+1)(q^2+q+1)(q^5-1)$\\[3pt]
\end{tabular}
\end{center}

\caption{Maximal order of an element in $\Aut L$.}\label{tab1}
\end{table}

\begin{lemma}\label{except}
Let $L$ be a simple exceptional group of Lie type.
Let $1\neq g\in \Aut L$ be a $p$-element and let  $G=\langle g,L\rangle$.
Then $g$ is almost cyclic in some cross-characteristic projective irreducible representation $\phi$ of $G$
that is non-trivial on $L$ if and only if one if one of the cases 
listed in Table \ref{exctbl} below occurs.
\end{lemma}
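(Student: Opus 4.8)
The plan is to apply the same dimension-versus-order strategy used throughout the paper, now specialised to the exceptional groups. Assume $\phi(g)$ is almost cyclic; then Lemma \ref{uu3} yields the basic inequality
\[
\dim\phi \le \alpha(g)(|g|-1).
\]
All three quantities entering this inequality are under control: an upper bound for $|g|$ is provided by Table \ref{tab1} (extracted from \cite{GMPS,KS}); the bound $\alpha(g)\le m+3$, with $m$ the untwisted Lie rank and the single exception $\alpha(g)\le 8$ for $L=F_4(q)$, is furnished by Proposition \ref{GS1}(2); and a lower bound for $\dim\phi$ is read off from \cite[Table 1]{Ho}. First I would run this inequality family by family, for $L$ of type $E_8$, $E_7$, $E_6$, ${}^2E_6$, $F_4$, ${}^2F_4$, ${}^3D_4$, $G_2$, ${}^2B_2$ and ${}^2G_2$. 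In every case the lower bound for $\dim\phi$ is a polynomial in $q$ whose degree (essentially the number of positive roots of the ambient root system) far exceeds the degree of $\alpha(g)(|g|-1)$, since $|g|$ is bounded by a polynomial of much smaller degree while $\alpha(g)$ is bounded by a constant. Consequently the inequality can hold only for finitely many small values of $q$.

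I expect the elimination step to leave exactly the small groups occurring in the list $\mathcal{L}$ of the Main Theorem, namely ${}^2B_2(2^3)$, $G_2(2)'$, $G_2(3)$, $G_2(4)$ and ${}^2G_2(3)'$, together with a short list of borderline candidates (for instance the Tits group ${}^2F_4(2)'$, and $G_2(q)$, ${}^2B_2(q)$, ${}^2G_2(q)$ for the smallest admissible $q$) that survive the crude bound. For each such candidate I would record the precise admissible orders $|g|$ of $p$-elements with $g^2\neq 1$ and the corresponding values of $\alpha(g)$, discarding at once those for which $\dim\phi>\alpha(g)(|g|-1)$ for every representation of the relevant minimal degree; this already disposes of ${}^2F_4(2)'$ and most of the remaining candidates.

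For the groups that truly survive I would then carry out the finite verification directly, using the ATLAS \cite{Atl,MAtl} together with the GAP character-table and Brauer-table libraries. For each intermediate group $G$ with $L\subseteq G\subseteq \Aut L$, each relevant class of $p$-elements $g$ with $g^2\neq 1$, and each irreducible $\phi$ of admissible dimension, I would test whether $\phi(g)$ is almost cyclic. This simultaneously establishes both implications: the cases that pass the test produce precisely Table \ref{exctbl}, while the failure of all other cases gives the necessity direction.

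The hard part will be the defining-characteristic case $\ell=p$, where the ordinary character table does not settle the question and one must work with Brauer characters or with explicit module structures. Here I would use two devices already exploited earlier in the paper: when the Sylow $p$-subgroup of $G$ is cyclic, the almost cyclicity of $\phi(g)$ can be decided by \cite[Lemma 2.13 and Corollary 2.14]{DMZ12}; otherwise I would embed $g$ in a suitable maximal subgroup on which the restriction of $\phi$ is understood and read off the Jordan form there, exactly as was done for the orthogonal groups in Lemma \ref{O3}. A secondary, bookkeeping obstacle is the treatment of the outer automorphisms: the structure of $\Aut L$ recorded in \cite[Table 5.1B]{KL} must be used to decide which $g\notin L$ actually arise and to bound $\alpha(g)$ for them, and the exceptional isomorphisms $G_2(2)'\cong\PSU_3(3)$ and ${}^2G_2(3)'\cong\PSL_2(8)$ must be tracked so that the conclusions remain consistent with the classical and $\PSL_2(q)$ analyses already completed.
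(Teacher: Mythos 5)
Your overall strategy is exactly the paper's: combine Lemma \ref{uu3} with the order bounds of Table \ref{tab1}, the bound $\alpha(g)\leq m+3$ from Proposition \ref{GS1}(2) and Hoffman's dimension bounds, then finish the surviving small groups by explicit computation with the ATLAS/GAP; your two devices for the case $\ell=p$ (cyclic Sylow subgroups via \cite[Lemma 2.13 and Corollary 2.14]{DMZ12}, and restriction to well-understood maximal subgroups) are also the ones the paper uses in comparable situations, and the list of survivors you predict is the correct one.

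The one family where your plan as written does not go through is ${}^3D_4(q)$. There the crude inequality $q^3(q^2-1)\leq 7\bigl(3e(q^3-1)(q+1)-1\bigr)$ survives for $q=2^e$ with $e\leq 7$, $q=3^e$ with $e\leq 4$, $q=5^e$ with $e\leq 2$, and $q=7,11,13,17,19$ --- a range far beyond direct computation. The paper eliminates these by (a) bounding semisimple $p$-element orders through the maximal tori of ${}^3D_4(q)$ as described by Kleidman \cite{Kl3D4}, (b) bounding unipotent orders via $\eta_p(L)=p$ for $p\geq 3$, $\eta_2(L)=8$, $\eta_3(L)=9$ from \cite{Tes}, and, crucially, (c) for the two residual cases $|g|=241$ in ${}^3D_4(4)$ and $|g|=601$ in ${}^3D_4(5)$, observing that $g$ generates a cyclic maximal torus, hence is regular semisimple, so that Lemma \ref{g12} gives $\alpha(g)\leq 3$. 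This last step is indispensable: with the Guralnick--Saxl bound $\alpha(g)\leq 7$ one has $7\cdot 240=1680>960\leq\dim\phi$ and $7\cdot 600=4200>3000\leq\dim\phi$, so your prescription of ``discarding at once those for which $\dim\phi>\alpha(g)(|g|-1)$'' fails precisely there, and your proposal supplies no mechanism for improving $\alpha(g)$ below the generic bound. A milder refinement of the same kind (splitting $g\in L$ from $g\notin L$ to sharpen the bound on $|g|$) is also needed to reduce ${}^2B_2(2^{2e+1})$ from $e\leq 6$ down to $e=1$; you should make both refinements explicit before the argument can be considered complete.
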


\begin{proof}
Suppose that $\phi(g)$ is almost cyclic.

If $L={}^2B _2(2^{2e+1})$, where $e\geq 2$, then $\alpha(g)\leq 5$.
By Lemma \ref{uu3} and Tables \ref{tab1} and \cite[Table 1, p. 14]{Ho}, the bound 
$2^e(2^{2e+1}-1)\leq 5  \left( (2e+1)(2^{2e+1}+2^{e+1}+1) -1\right)$ must be met.
However, the bound only holds for $e\leq 6$. So, we may assume $2 \leq e \leq 6$.
If $g \in L$, then $|g| \leq 2^{2e+1}+2^{e+1}+1$, and hence we can refine the bound to
$2^e(2^{2e+1}-1)\leq 5 ( 2^{2e+1}+2^{e+1})$. However, this only holds for 
$e=2$. This means that for $e=3,4,5,6$, it suffices to examine elements
 $g \not \in L$. But, for these values of $e$, $|g|\leq 2e+1$, which yields $2^e(2^{2e+1}-1)> 10e$, a contradiction.

If $L={}^2B _2(2^3)$, then $|g|\in\{  2, 3, 4, 5, 7, 13 \}$ and $\dim \phi\geq 8$.
If $|g|=2$, $\alpha(g)=3$, whereas, if $|g|>2$, $\alpha(g)=2$.
We readily get a contradiction if $|g|=2,3,4$. So, we may assume $|g|\geq 5$, which implies $g\in L$.
Inspection of the Brauer characters shows that we are left to consider the following cases:
(i) $|g|=5,7,13$, $\ell=5$ and $\dim \phi=8$;
(ii) $|g|=13$, $\ell\neq 2$ and $\dim \phi=14$; 
(iii) $|g|=13$, $\ell=13$ and $\dim \phi=16,24$.
Using the GAP package we obtain item (1) of the statement.
If $L={}^2B _2(2^5)$, then $|g|\in \{ 2, 4, 5, 25, 31, 41 \}$ and $\dim \phi\geq 124$.
Since $\alpha(g)\leq3$, we get a contradiction.

Next, suppose that $L=G_2(q)$.  If  $q\geq 5$, then a detailed check shows that we get a contradiction, by Lemma \ref{uu3}.

Next, let $L=G_2(3)$. Then $|g| \in \{  2, 3, 4, 7, 8, 9, 13 \}$ and 
either $\dim \phi= 14$ or $\dim \phi \geq 27$.
If $|g|\leq 3$, then $\alpha(g)\leq 4$, otherwise $\alpha(g)=2$.
Thus we readily get a contradiction, unless $|g|=8,9,13$ and $\dim \phi=14$.
Using the GAP package, we find that $\phi(g)$ is almost cyclic only for $|g|=13$.

Finally, let $L=G_2(4)$. Then $|g|\in \{ 2, 3, 4, 5, 7, 8, 13, 16 \}$ and  either 
$\dim \phi=12$ or $\dim \phi \geq 64$.
If $|g|\geq 4$, then $\alpha(g)=2$, whereas $\alpha(g)\leq 4$ if $|g|\leq 3$.
We readily get a contradiction, unless $\dim \phi=12$ and $|g|\geq 7$.
Using the GAP package, we find  that $\phi(g)$ is almost cyclic 
if and only if $|g|=13,16$.

Now, suppose that $L={}^3D_4(q)$, where $q=r^e$. 
Ruling out this group requires a rather delicate analysis, which we need to develop in detail.
Here $\alpha(g)\leq 7$ and we have to consider the bound 
$r^{3e}(r^{2e}-1)\leq 7 (3e(r^{4e}+r^{3e}-r^e-1) -1)$, which
only holds when $q$ has one of the following values:
$q=2^e$, where $e\leq 7$; $q=3^e$, where $e\leq 4$;
$q=5^e$, where $e\leq 2$; $q=7,11,13,17,19$.

Suppose first that $g$ is semisimple. From the description of the maximal tori of $L$ given in \cite{Kl3D4}, 
we can extract the following table, which gives, for each value of $q$, an upper bound $s(q)$ for the maximal order of a 
semisimple
$p$-element of $\Aut L$:
\begin{center}
\begin{tabular}{c|c||c|c||c|c||c|c||c|c||c|c}
$q$ &  $s(q)$ & $q$ &  $s(q)$  & $q$ &  $s(q)$  & $q$ &  $s(q)$ & $q$ &  $s(q)$  & $q$ &  $s(q)$  \\\hline
$2$ & $13$ &
$3$ & $73$ &
$4$ & $241$ &
$5$ & $601$ &
$7$ & $181$ &
$8$ & $243$ \\
$9$ & $6481$ &
$11$ & $1117$ &
$13$ & $28393$ & 
$16$ & $673$   &
$17$ & $83233$ &
$19$ & $769$\\
$25$ & $390001$ &
$27$ & $530713$ &
$32$ & $1321$ &
$64$ & $38737$ &
$81$ & $6481$ &
$128$ & $14449$
\end{tabular}
\end{center}
It is easy to verify that for $q\geq 7$ we get a contradiction, by Lemma \ref{uu3}.

Next, suppose that $g$ is unipotent. Then $\eta_p(L)=p$ if $p\geq 3$, while $\eta_2(L)=8$ and $\eta_3(L)=9$ (see \cite{Tes}).
Thus we readily get a contradiction for $q\geq 3$.

Let $L={}^3D_4(2)$. Then $|g|\in \{ 2, 3, 4, 7, 8, 9, 13 \}$ and $\dim \phi\geq 25$.
If $|g|\geq 4$, then $\alpha(g)=2$, whereas, if $|g|=2,3$, then $\alpha(g)\leq 4$. In both cases, we get a 
contradiction.

At this stage, we are left to examine the cases  $q=3,4,5$ and $g$ semisimple.

Let $L={}^3D_4(3)$. Then $|g|\in \{2, 4, 7, 8, 13, 73 \}$. 
Since $\dim \phi\geq 216$, we get a contradiction unless $|g|=73$.
In this case, using the GAP package, we find that $\alpha(g)=2$, which yields a contradiction.

Let $L={}^3D_4(4)$. Then $|g|\in \{ 3, 5, 7, 9, 13, 27, 241 \}$.
Since $\dim \phi\geq 960$, we get a contradiction unless $|g|=241$. Finally, let $L={}^3D_4(5)$. 
Then $|g|\in \{ 2, 3, 4, 7, 8, 9, 31, 601  \}$.
Since $\dim \phi\geq 3000$, we get a contradiction unless $|g|=601$.

Notice that in the instances $|g|=241$ for $L={}^3D_4(4)$, and $|g|=601$ for $L={}^3D_4(5)$, the exceptional element $g$ 
belongs to $L$ and is a generator of the cyclic torus
$T_5$ (in the notation of \cite{Kl3D4}). Since $g$ is regular semisimple, by Lemma \ref{g12}  we have $\alpha(g)\leq 3$,
whence a contradiction.

Finally,  we list in  Table \ref{exctbl} the exact occurrences of almost cyclic elements
for the simple group ${}^2F_4(2)'$,
the so-called Tits group, as well as for the simple groups $G=G_2(2)'$ and $G={}^2G_2(3)'$.
These results are based, as usual, on Lemma \ref{uu3} and on direct computations with GAP.
All the remaining exceptional groups are ruled out using Lemma \ref{uu3}. The relevant computations are left to the reader.
\end{proof}

\begin{table}[h]
$$\begin{array}{c|c|c|c||c|c|c|c}
G & \ell & \dim \phi & |g| & G & \ell & \dim \phi & |g|\\\hline
{}^2B _2(2^3) & \textrm{all} & 14 & 13  & {}^2B _2(2^3) & 5 & 8 & 7,13\\
G_2(2)' & \textrm{all} &  6,7 & 7,8 & G_2(2)' & 3 & 3 & \textrm{all}  \\
G_2(2) & \textrm{all} & 6,7 & 8 &  G_2(3) & \textrm{all} & 14 & 13\\
G_2(4)  & \textrm{all} &  12 &  13 & G_2(4).2  & \textrm{all} &  12 &  16\\
 {}^2G_2(3)'  & \neq 2 &  7,8 & 7,9 & {}^2G_2(3)'  & \neq 2,7 & 9 & 9 \\
{}^2G_2(3)'& 2 & 2 & \textrm{all} & {}^2G_2(3)'& 2 & 4  & 3,7,9\\ 
 {}^2G_2(3)'& 2 & 8 & 7,9 & 
 {}^2G_2(3) & \neq 2 & 7,8 & 9\\
 {}^2G_2(3) & 2 & 6,8 & 9
  \end{array}$$
\caption{Occurrences of almost cyclic elements in exceptional groups of Lie type.}\label{exctbl}
\end{table}

\section{Proof of the Main Theorem}\label{sec proof}

The following theorem answers with full details the question raised in the present paper, 
thus also proving the Main Theorem quoted in the Introduction.

\begin{theo}\label{main2}
Let $L$ be a finite simple group of Lie type, let $g \in \Aut L$ be a $p$-element for some prime $p$, and let 
$G=\langle L,g \rangle$. Let $\phi$ be an irreducible projective representation of $G$ over an algebraically closed 
field $\F$ of characteristic $\ell$, different from the defining characteristic of $L$. Suppose that $\phi$ is 
non-trivial on $L$; furthermore, assume that $g^2\neq 
1$, $L\neq \PSL_2(q)$ and $\phi(g)$ is almost cyclic. Then, either
$\phi_{|L}$ is an irreducible Weil representation of $L$, where $L\in \{\PSL_n(q)\;  (n\geq 3),\; \PSU_n(q) \; (n\geq 
3),\; \PSp_{2n}(q)\; 
(n\geq 2,\, q \textrm{ odd} )\}$ and $g$ is semisimple, or 
$L$ is of exceptional type and $(\dim \phi, \ell, |g|)$ are given in {\rm Table \ref{exctbl}}, or
one of the following occurs:
\begin{itemize}
\item[(1)] $L=\PSL_3(2)$, $\dim \phi=2$,   $\ell=7$ and either $|g|=3,4,7$ or $g\notin L$ and $|g|=8$;
\item[(2)] $L=\PSL_3(2)$, $\dim \phi=3,4$ and  and either $|g|=3,4,7$ or $g\notin L$ and $|g|=8$;
\item[(3)] $L=\PSL_3(2)$, $\dim \phi=5$, $\ell=7$ and either $|g|=4$ or $g\notin L$ and $|g|=8$;
\item[(4)] $G=\PSL_3(2)$, $\dim \phi=6$, $\ell=7$ and  $|g|=7$;
\item[(5)] $G=\PSL_3(2).2$, $\dim \phi=6,7$, and  $|g|=8$, with $g\notin L$;
\item[(6)] $L=\PSL_3(2)$, $\dim \phi=8$  and either $\ell \neq 3,7$ and $|g|=7$, or $\ell\neq 7$, $g\notin L$ and 
$|g|=8$;
\item[(7)] $G=\PSL_3(4)$, $\dim \phi=4$,  $\ell=3$ and $|g|=3,4,5,7$;
\item[(8)] $G=\PSL_3(4)$, $\dim \phi=6$ and $|g|=5,7$;
\item[(9)] $G=\PSL_3(4)$, $\dim \phi=8$ and $|g|=7$;
\item[(10)] $G=\PSL_3(4).2_1$, $\dim\phi=6$ and $|g|=8$;
\item[(11)] $G=\PSL_3(4).2_2$, $\dim\phi=4,6$, $\ell=3 $ and $|g|=8$;
\item[(12)] $G=\PSL_3(4).2_3$, $\dim\phi=6$, $\ell=3$ and $|g|=8$;
\item[(13)] $G=\PSL_3(4).2_3$, $\dim\phi=8$  and $|g|=8$;
\item[(14)] $G=\PSL_3(4).2_3$, $\dim\phi=10$, $\ell=3$ and $g\in 8a$;
\item[(15)] $G=\PSL_4(2)$, $\dim \phi=7$ and either $g\in 3a$ or $|g|=5,7$;
\item[(16)] $G=\PSL_4(2)$, $\dim \phi=8$ and $|g|=7$;
\item[(17)] $G=\PSL_4(2).2$, $\dim \phi=7$ and $|g|=4,8$;
\item[(18)] $G=\PSL_4(2).2$, $\dim \phi=8$ and $|g|=8$;
\item[(19)] $G=\PSU_3(3).2$, $\dim\phi=6$ and $|g|=8$;
\item[(20)] $G=\PSU_3(3).2$, $\dim\phi=7$, $\ell\neq 2$ and $|g|=8$;
\item[(21)] $G=\PSU_3(4)$, $\dim\phi=12$ and $|g|=16$;
\item[(22)] $G=\PSU_4(2)$, $\dim\phi =4$, $\ell\neq 3$ and $g \in 3a,3b,3d$ or $|g|=4,5,9$;
\item[(23)] $G=\PSU_4(2)$, $\dim\phi =4$, $\ell = 3$ and $g \in 3a,3b,4a$ or $|g|=5,9$;
\item[(24)] $G=\PSU_4(2)$, $\dim\phi =5$ and $g \in 4b$;
\item[(25)] $G=\PSU_4(2).2$, $\dim\phi =4$, $\ell = 3$ and $g \in 4d$ or $|g|=8$;
\item[(26)] $G=\PSU_4(2).2$, $\dim\phi =5$, $\ell = 3$ and  $|g|=4,8$;
\item[(27)] $G=\PSU_4(2).2$, $\dim\phi =6$, $\ell \neq 3$ and $|g|=4,8$;
\item[(28)] $G=\PSp_4(3)$, $\dim\phi =4$ and $g \in 3a,3b,3d$ or $|g|=9$;
\item[(29)] $G=\PSp_4(3)$, $\dim\phi =5$, $\ell \neq 2$ and $g \in 3d$ or $|g|=9$;
\item[(30)] $G=\PSp_4(3)$, $\dim\phi =6$ and $g \in 3c$ or $|g|=5,9$;
\item[(31)] $G=\PSp_4(3).2$, $\dim\phi =6$ and $|g|=4,8$;
\item[(32)] $G=\PSU_4(3)$, $\dim \phi=6$ and either $g \in 3b$ or $|g|=5,7,8,9$;
\item[(33)] $G=\PSU_4(3).2_2$, $\dim \phi=6$ and either $g \in 4d$ or $|g|=8$;
\item[(34)] $G=\PSU_5(2).2$, $\dim \phi=10$ and $|g|=16$;
\item[(35)] $G=\Sp_4(2)'$, $\dim \phi=3,4,5$ and $\ell\neq 2,3$;
\item[(36)] $G=\Sp_4(2)'$, $\dim \phi=6$, $\ell\neq 2$ and $|g|=5$;
\item[(37)] $G=\Sp_4(2)'$, $\dim \phi=3,4$ and $\ell=2$;
\item[(38)] $G=\Sp_4(2)'$, $\dim \phi=2,3,4$ and $\ell=3$;
\item[(39)] $G=\PSp_4(4)$, $\dim \phi=18$ and $|g|=17$;
\item[(40)] $G=\PSp_6(2)$, $\dim \phi=7$ and either $g \in 3a,4c$ or $|g|=5,7,8,9$;
\item[(41)] $G=\PSp_6(2)$, $\dim \phi=8$ and $|g|=7,8,9$;
\item[(42)] $G=\POmega_8^+(2)$, $\dim \phi=8$ and either $g \in 3a, 5a, 8b$ or $|g|=7,9$;
\item[(43)] $G=\PSO_8^+(2)$, $\dim \phi=8$ and either $g \in 4f$ or $|g|=8$.
\end{itemize}
Moreover, in each of the cases listed above, $\phi(g)$ is indeed almost cyclic.
\end{theo}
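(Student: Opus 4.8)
The plan is to show that Theorem~\ref{main2} follows by assembling all the case analyses carried out in Sections~\ref{PSL2} through~\ref{sec exc}, organised exactly according to the six-case partition announced in the Introduction. The overarching strategy is uniform: whenever $\phi(g)$ is almost cyclic, Lemma~\ref{uu3} forces the upper bound $\dim\phi\le\alpha(g)(|g|-1)$, which I confront with the lower bounds for $\dim\phi$ from Lemmas~\ref{md6} and~\ref{gmst2}, the bounds on $\alpha(g)$ from Propositions~\ref{GS1} and~\ref{GS2}, and the bounds on $|g|\le\eta_p(\Aut L)$ collected in Section~\ref{se3}. For all but finitely many groups these inequalities are jointly inconsistent, so almost cyclicity cannot occur; the surviving groups are precisely those listed.

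First I would dispose of $L=\PSL_2(q)$ by invoking the hypothesis $L\neq\PSL_2(q)$ of the theorem together with Theorem~\ref{t22} and Lemma~\ref{news2a} (for the involution case), noting that these groups are genuinely excluded from the statement. Next I treat the three infinite families $\PSL_n(q)$ ($n>2$), $\PSU_n(q)$ ($n>2$), $\PSp_{2n}(q)$ ($q$ odd) by splitting into case (A) and case (B) as in Section~\ref{LUSpodd}. In case~(A), when $\phi_{|L}$ has a non-Weil constituent, Lemma~\ref{nonW} shows $\phi(g)$ is never almost cyclic (outside the finitely many small groups handled separately). In case~(B), when every constituent is Weil but $g\notin A_d$, Lemmas~\ref{3hh}, \ref{p67b}, \ref{newp2p}, \ref{e24}, \ref{5.7}, \ref{nn5u}, \ref{2ss} and~\ref{4ss} (for $(p,q)=1$) and Lemmas~\ref{LU2}, \ref{SL34}, \ref{u34}, \ref{SP43} (for $p\mid q$) eliminate almost cyclicity, with the sole positive exceptions $\PSU_5(2).2$ (from Lemma~\ref{LU2}) and $\PSp_4(3)$ (from Lemma~\ref{SP43}). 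The residual Weil case with $g\in A_d$ is precisely the content of \cite{DMZ12} and yields the Weil alternative of the conclusion.

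I would then collect the positive occurrences. The exceptional low-dimensional classical groups $\PSL_3(2)$, $\PSL_3(4)$, $\PSL_4(2)$, $\PSU_3(3)$, $\PSU_3(4)$, $\PSU_4(2)\cong\PSp_4(3)$, $\PSU_4(3)$ are treated one at a time in Section~\ref{low}, giving items (1)--(34); the remaining classical families $\Sp_{2n}(q)$ ($q$ even), $\Omega_{2n+1}(q)$, $\POmega^\pm_{2n}(q)$ are handled in Section~\ref{otherclass} via Lemmas~\ref{SP2}, \ref{O1}, \ref{O3} and Remark~\ref{Sp42d}, giving items (35)--(43); and the exceptional groups are settled by Lemma~\ref{except}, contributing Table~\ref{exctbl}. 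Assembling these lists, together with the Weil alternative, reproduces exactly the dichotomy in the statement. For the converse (``moreover, $\phi(g)$ is indeed almost cyclic''), each positive entry has already been verified in its defining lemma, typically by explicit computation with GAP and the Atlas \cite{Atl,MAtl}.

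The main obstacle is not any single deep argument but the sheer bookkeeping: one must confirm that the union of the excluded-and-surviving cases across all sections is \emph{exhaustive} and \emph{non-overlapping}, so that no group of Lie type slips through uncatalogued and no spurious entry is introduced. The delicate point is the interface between case~(A) and case~(B) in the Weil analysis, where one must be sure (via Clifford's theorem, as in the preamble to Section~\ref{LUSpodd}) that a representation whose restriction is a sum of Weil constituents but which is itself non-Weil on $G$ has been correctly classified as either extending from $A_d$ (hence deferred to \cite{DMZ12}) or eliminated. I would therefore conclude the proof by a careful cross-check: for each simple group of Lie type $L$, verify that the set of triples $(\dim\phi,\ell,|g|)$ for which $\phi(g)$ is almost cyclic, as determined section by section, coincides with the corresponding entries of the enumerated list and Table~\ref{exctbl}, thereby establishing both directions of the theorem.
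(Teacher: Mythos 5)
Your proposal is correct and follows essentially the same route as the paper: Theorem \ref{main2} is proved there exactly by assembling the case analyses of Sections \ref{LUSpodd}--\ref{sec exc} (splitting the classical groups by whether $g$ is semisimple or unipotent and whether $\phi_{|L}$ has a non-Weil constituent, deferring the Weil case with $g\in A_d$ to \cite{DMZ12}, and reading off the surviving small groups from Section \ref{low}, the even-characteristic symplectic and orthogonal groups from Section \ref{otherclass}, and the exceptional groups from Lemma \ref{except}). The only cosmetic difference is that the paper organises the elimination by the dichotomy semisimple versus unipotent rather than by your case (A)/(B) labels, but the same lemmas are invoked and the same list results.
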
 

\begin{proof}
Suppose that $L$ is a classical group, and assume furthermore that $L$ is neither orthogonal nor symplectic of even 
characteristic. By Lemma \ref{3hh}, we may assume that $p$ divides $|L|$.

Assume first that $g$ is a semisimple element. Observe that, if $g \notin A_d$, then $\phi(g)$ is 
not almost cyclic, unless $L \in \{\PSU_3(3), \PSU_4(3), \PSp_4(3)\}$ (see Lemmas \ref{p67b}, \ref{newp2p}, 
\ref{e24}, \ref{5.7}, \ref{nn5u}, \ref{2ss} and \ref{4ss}). Now, suppose that $g \in A_d$. Then (see the argument 
following Lemma \ref{3hh}) either $\phi_{|L}$ has an irreducible constituent which is not a Weil representation, or  
$\phi_{|L}$ is an irreducible Weil representation of $L$. It follows that $\phi(g)$ is not almost cyclic, unless either  
$p>2$ and $L \in \{\PSL_3(2), \PSL_3(4), \PSL_4(2)\}$ (see Lemmas \ref{nonW} and  \ref{newp2p}), or $L \in \{\PSU_4(2), 
\PSU_4(3)\}$ or $L= \PSp_4(3)$ (see Lemmas  \ref{nonW} and \ref{4ss}). 
 
Next,
assume that $g$ is unipotent, without any restriction to 
$\phi$. Then $\phi(g)$ is not almost cyclic unless $L \in \{\PSL_3(2), \PSL_3(4), \PSL_4(2), \PSp_4(3), \PSU_5(2)\}$ 
(see 
Lemmas \ref{LU2}, \ref{SL34}, \ref{u34} and \ref{SP43}). The above listed  exceptions are analysed in Section \ref{low}.
The case $L=\PSL_3(2)$ is dealt with in  \ref{L32}, yielding items (1) to (6) of the statement. 
The case $L=\PSL_3(4)$ in \ref{L34}, yielding items (7) to (14) of the statement. 
The case $L=\PSL_4(2)$ is dealt with in \ref{L42}, yielding items (15) to (18) of the statement. 
The case $L=\PSU_3(3)$ is dealt with in \ref{U33}, yielding items (19) and (20). 
The case $L=\PSU_3(4)$ is dealt with in \ref{U34}, where it is found that $\phi(g)$ is never almost cyclic, unless
$\phi$ is a Weil representation of dimension $12$ and $|g|=16$, thus yielding item (21). 
The case $L=\PSU_4(2) \cong \PSp_4(3)$ needs some extra care. 
If we view $L$ as $\PSU_4(2)$, the Weil representations of $L$ have dimension $5$ and $6$.  
Thus (see \ref{U42}), we get items (22) to (27). 
If we view $L$ as $\PSp_4(3)$, the Weil representations have dimension $4$ and $5$. 
This group is dealt with in Lemma \ref{SP43} if $g$ is unipotent (that is, a $3$-element), and in \ref{U42} if $g$ is
semisimple, yielding items (28) to (31). 
The case $L = \PSU_4(3)$ is dealt with in \ref{U43}, yielding items (32) and (33). 
The case $L = \PSU_5(2)$ is dealt with in \ref{LU2}, yielding item (34) of the statement.

Finally, assume that  $L$ is a classical group, either orthogonal or symplectic of even characteristic.  These groups are
dealt with in Section 
\ref{otherclass}. If $L$ is symplectic, then we get items (35) to (41) of the statement, by Lemma \ref{SP2} and Remark \ref{Sp42d}.
Lemmas \ref{O1} and \ref{O3} deal with the orthogonal groups, yielding items (42) and (43) of the statement.
\end{proof}

\section*{Appendix}

\noindent Here some useful routines for MAGMA, kindly provided by the anonymous referee.

\begin{verbatim}
function IsAlmostCyclic(A)
 f:=CharacteristicPolynomial(A) div MinimalPolynomial(A);
 if f eq 1 or 
  (#Factorization(f) eq 1 and Factorization(f)[1,2] eq Degree(f)) 
  then return true;
 end if;
 return false;
end function;

function AllAlmostCyclic(M)
 AAC:=[]; 
 rho:=Representation(M);
 for c in Remove(ConjugacyClasses(Group(M)),1) do
  if (not(c[1] eq 2) and IsPrimePower(c[1])
   and not(IsDivisibleBy(c[1],Characteristic(Field(M))))) then
   A:=rho(c[3]);
   if(IsAlmostCyclic(A)) then Append(~AAC,c); end if;
  end if;
 end for;
return AAC;
end function;

// Let G be a permutation group in characteristic p.

function FindAllCases(G,p)
 AllCases:=[];
 AllCasesReps:=[];
 ps:=[i[1]:i in FactoredOrder(G)|i[1] ne p];
 for l in ps do
  _,Irr:=IrreducibleModules(G,GF(l));
  Remove(~Irr,1); // remove the trivial.
  for M in Irr do
   AAC:=AllAlmostCyclic(M);
   for i in AAC do
    Append(~AllCases,[l,Dimension(M),i[1]]);
    Append(~AllCasesReps,<M,i[3]>);
   end for;
  end for;
 end for;
return AllCases,AllCasesReps;
end function;
\end{verbatim}

\end{document}